\documentclass[a4paper,11pt]{article}
\usepackage{amsfonts}
\usepackage{amssymb}
\usepackage{amsmath}
\usepackage{amsthm}
\usepackage{verbatim}
\usepackage{algorithmic}
\theoremstyle{plain}
\newtheorem{theorem}{Theorem}[section]
\newtheorem{lemma}[theorem]{Lemma}

\newtheorem{proposition}[theorem]{Proposition}
\theoremstyle{definition}

%

% Here you can include the standard packages you use.
% Try to avoid using non-standard packages.
% If you use a non-standard package you will have
% to submit it when you submit the final version of
% your paper.
\usepackage{graphicx}
\usepackage{amsmath}
\usepackage{amssymb}
\usepackage{verbatim}

%%%%%%%%%%%%%%%%%%%%%%%%%%%%%%%%%%%%%%%%%%%%%%%%%%%%%%%%%
% Setting the correct page numbers                      
% Ignore the next two commented lines                   
% but please don't delete                               
%%%%%%%%%%%%%%%%%%%%%%%%%%%%%%%%%%%%%%%%%%%%%%%%%%%%%%%%%
%\input{../procnum.tex}
%\numbering{../aiml14db}{paper}

% definitions specific to your article

%%%%%%%%%%%%%%%%%%%%%%%%%%%%%%%%%%%%%%%%%%%%%%%%%%%%%%%%%

%The following line defines the page header consisting of the surnames of the authors.
% Please include only the last names! 
% Separate by commas except the last two surnames which are separated by an "and".
%\def\lastname{Hella, Kuusisto}

\begin{document}

%\begin{frontmatter}
  \title{One-dimensional fragment of first-order logic}
  \author{Lauri Hella\footnote{School of Information Sciences, University of Tampere, Finland}\, \, and\,  Antti Kuusisto\footnote{Institute of Computer Science, University of Wroc\l aw, Poland}}
%\footnote{You can put your email address or grant acknowledgement as a footnote, if you wish.}
% \address{School of Information Sciences\\ University of Tampere \\ Finland}
%\author{Antti Kuusisto}
%\address{Institute of Computer Science \\ University of Wroc\l aw \\ Poland}
 
\maketitle

\begin{abstract}
\noindent
We introduce a novel decidable fragment of first-order logic. The fragment is
\emph{one-dimensional} in the sense that
quantification is limited to applications of blocks of existential (universal) quantifiers such that at most
one variable remains free in the quantified formula.
The fragment is closed under Boolean operations,
but additional restrictions (called \emph{uniformity conditions})
apply to combinations of atomic formulae with two or more variables.
We argue that the notions of \emph{one-dimensionality} and
\emph{uniformity} together offer a novel perspective
on the \emph{robust decidability} of modal logics.
%The technique is a central contribution of the article.
%We also show that the one-dimensional fragment is expressively equivalent to a
%polyadic modal logic with the capacity of
%permuting and forming Boolean combinations of
%accessibility relations.
%Our decidability proof is based on a 
%novel proof technique involving a satisfiability
%preserving translation into monadic first-order logic.
We also establish that minor modifications to the restrictions
of the syntax of the one-dimensional fragment lead to undecidable 
formalisms. Namely, the \emph{two-dimensional} 
and \emph{non-uniform one-dimensional} fragments
are shown undecidable. Finally, we prove that
with regard to expressivity,
the one-dimensional fragment is
incomparable with both the
guarded negation fragment and 
two-variable logic with counting. 
Our proof of the decidability of the one-dimensional fragment is
based on a technique involving a direct reduction to the monadic
class of first-order logic.
The novel technique is itself of an independent mathematical interest.
\end{abstract}

%\begin{keyword}
%Extensions of modal logic,
%fragments of first-order logic,
%Boolean modal logic, decidability
%\end{keyword}
% \end{frontmatter}

%%%%%%%%%%%%%%my own stuff
\newcommand{\M}{\mathfrak{M}}
\newcommand{\T}{\mathfrak{T}}
\newcommand{\A}{\mathfrak{A}}
\newcommand{\B}{\mathfrak{B}}
\newcommand{\N}{\mathfrak{N}}
%%%%%%%%%%%%%%%%%%

%
%%%%%%%%%%%%%%%%%%%%%%%%%%%%%%%%%%%%%%%%%%%%
%

%
\section{Introduction}
Decidability questions constitute one of the core themes in
computer science logic. 
Decidability properties of several fragments of first-order logic have
been investigated after the completion of the program
concerning the classical decision problem.
Currently perhaps the most important two frameworks studied in this context
are those based on the \emph{guarded fragment}
\cite{IEEEonedimensional:andreka}
and \emph{two-variable logics}.
Two-variable logic $\mathrm{FO}^2$ was introduced by Henkin
in \cite{IEEEonedimensional:henkin}
and showed decidable in \cite{IEEEonedimensional:mortimer} by Mortimer.
%
%The equality-free two-variable
%
%logic was shown decidable already in and \cite{IEEEonedimensional:mortimer}.
%
The satisfiability and finite satisfiability problems of 
two-variable logic were proved to
be $\mathrm{NEXPTIME}$-complete in \cite{IEEEonedimensional:gradelkolaitis}.
The extension of two-variable logic with counting quantifiers, $\mathrm{FOC}^2$,
was shown decidable in \cite{IEEEonedimensional:gradel}.
It was subsequently proved to be 
$\mathrm{NEXPTIME}$-complete in \cite{IEEEonedimensional:pratth}.
%
%\cite{IEEEonedimensional:pacholski} that
%
%$\mathrm{FOC}^2$ is in $\mathrm{2NEXPTIME}$
%
%that $\mathrm{FOC}^2$ is
%

%
Research concerning
decidability of variants of 
two-variable logic has been
very active in recent years.
Recent articles in the field include for example
\cite{IEEEonedimensional:kieronski}
\cite{IEEEonedimensional:charatonic},
%
%\cite{IEEEonedimensional:kieronskimichaliszyn},
%
\cite{IEEEonedimensional:kieronskitendera},
%
%\cite{IEEEonedimensional:zeume},
%
\cite{IEEEonedimensional:tendera},
and several others.
The recent research efforts have mainly concerned decidability
and complexity issues
in restriction to particular classes of 
structures, and also questions
related to different built-in features and operators that 
increase the expressivity of the base language.
Guarded fragment $\mathrm{GFO}$ was originally conceived in
\cite{IEEEonedimensional:andreka}.
It is a restriction of first-order logic
that only allows quantification of
``guarded" new variables---a restriction
that makes the logic rather similar
to modal logic.
The guarded fragment has generated a vast
literature, and several related decidability
questions have been studied.
The fragment has recently been significantly
generalized in \cite{IEEEonedimensional:barany}.
The article introduces the
\emph{guarded negation first-order logic} $\mathrm{GNFO}$.
This logic only allows
negations of formulae that are guarded
in the sense of the guarded fragment.
The guarded negation fragment
has been shown complete for $\mathrm{2NEXPTIME}$
in \cite{IEEEonedimensional:barany}.
Two-variable logic and guarded-fragment are
examples of decidable fragments of first-order logic
that are not based on restricting the quantification
patterns of formulae, unlike the prefix classes studied in the context of
the classical decision problem.
Surprisingly, not many such frameworks have been investigated in
the literature.
%
%In fact related research has concentrated on extensions of
%
%two-variable logics and the guarded fragment.
%

%
In this paper we introduce a novel
decidable fragment that allows arbitrary quantifier
alternation patterns.
The \emph{uniform one-dimensional
fragment} $\mathrm{UF}_1$ of first-order logic is obtained by restricting
quantification to blocks of existential (universal) quantifiers
that \emph{leave
at most one free variable} in the resulting formula.
Additionally, a \emph{uniformity condition} applies to the use of
atomic formulae: if $n,k\geq 2$, then a Boolean combination of
atoms $R(x_1,...,x_k)$ and $S(y_1,...,y_n)$
is allowed only if $\{x_1,...,x_k\} = \{y_1,...,y_n\}$.
Boolean combinations of formulae with at most one
free variable can be formed freely.
We establish decidability of
the satisfiability and finite satisfiability problems of $\mathrm{UF}_1$.
We also show that if the uniformity condition is
lifted, we obtain an undecidable logic. Furthermore, if
we keep uniformity but go two-dimensional by allowing existential (universal)
quantifier blocks that leave two variables free, we again obtain an
undecidable formalism. Therefore, \emph{if we lift either of the two
restrictions that our fragment is based on, we 
obtain an undecidable logic.}
In addition to studying decidability,
we also show that $\mathrm{UF}_1$ is
incomparable in expressive power with both
$\mathrm{FOC}^2$ and
$\mathrm{GNFO}$.
%
%We also establish the analogous results concerning
%
%fluted logic \cite{IEEEonedimensional:purdy}.
%
%Fluted logic is an example of the very few well-studied fragments
%
%of first-order logic that are
%
%not based on quantifier prefix restrictions,
%
%and are known to be decidable.
%

%
In \cite{IEEEonedimensional:vardi}, Vardi initiated an intriguing research effort
that aims to understand phenomena behind the
\emph{robust decidability} of different variants of
modal logic.
In addition to \cite{IEEEonedimensional:vardi},
see also for example \cite{IEEEonedimensional:gradelrobustly} and
the introduction of \cite{IEEEonedimensional:barany}.
Modal logic indeed has several features related to
what is known about decidability. In particular,
modal logic embeds into both $\mathrm{FO}^2$
and $\mathrm{GFO}$.
However, there exist several important and widely applied decidable extensions
of modal logic that
do not embed into \emph{both} $\mathrm{FO}^2$ and
$\mathrm{GFO}$.
%
%but are indeed contained in $\mathrm{UF}_1$.
%
Such extensions include \emph{Boolean modal logic}
(see \cite{IEEEonedimensional:passy}, \cite{IEEEonedimensional:lutz})
and basic \emph{polyadic modal logic}, i.e, modal logic
containing accessibility relations of 
arities higher than two (see \cite{IEEEonedimensional:venema}).
Boolean modal logic allows Boolean combinations of
accessibility relations and therefore can express for
example the formula $\exists y\bigl(\neg R(x,y) \wedge P(y)\bigr)$.
Polyadic modal logic can express the
%
%first-order
%
formula $\exists x_2...\exists x_k\bigl(R(x_1,...,x_k) \wedge P(x_2)\wedge...\wedge P(x_k)\bigr)$.
Boolean modal logic and polyadic modal logic
are both inherently one-dimensional, and furthermore, satisfy the uniformity
condition of $\mathrm{UF}_1$. Both logics embed
into $\mathrm{UF}_1$.
The notions of \emph{one-dimensionality} and \emph{uniformity}
can be seen as novel features that can help, in part, \emph{explain} decidability
phenomena concerning modal logics.
Importantly, also the equality-free fragment of $\mathrm{FO}^2$
embeds into $\mathrm{UF}_1$.
In fact, when attention is restricted to vocabularies with
relations of arities at most two,
then the expressivities of
$\mathrm{UF}_1$ and the equality-free fragment of $\mathrm{FO}^2$ coincide.
Instead of seeing this as a weakness of $\mathrm{UF}_1$, we in fact regard $\mathrm{UF}_1$ as
\emph{a canonical generalisation} of (equality-free) $\mathrm{FO}^2$ into
contexts with arbitrary relational vocabularies.
The fragment $\mathrm{UF}_1$ can be regarded as a
\emph{vectorisation} of $\mathrm{FO}^2$
that offers new possibilities for extending research efforts
concerning two-variable logics.
\emph{It is worth noting that for example in database theory
contexts, two-variable logics as such are not
always directly applicable due to the arity-related limitations.}
Thus we believe that the one-dimensional fragment is indeed a
worthy discovery that
\emph{extends the scope of research on
two-variable logics to the realm involving relations of arbitrary arities.}
%

%
%Thus $\mathrm{UF}_1$ introduce a novel
%
%natural way of extending two-variable logics
%
%with the possibility of obtaining decidable fragments. 
%
%In this context it is worth noting that $\mathrm{UF}_1$ copes fine
%
%with relation symbols of any arity. This is of course not the case
%
%with two-variable logic, and therefore two-variable logics as such are not
%
%directly applicable in typical database theory contexts, for example.
%

%
Instead of extending basic techniques from the field of
two-variable logic, our decidability proof is based on a direct
satisfiability preserving translation of 
$\mathrm{UF}_1$ into monadic first-order logic.
The novel proof technique is mathematically interesting in its own right, and is
in fact a central contribution of this article;
the proof technique is clearly robust and can be modified and extended to give other
decidability and complexity results. Furthermore, as a by-product of our
proof, we identify a natural polyadic modal logic $\mathrm{MUF}_1$, which is expressively
equivalent to the one-dimensional fragment. This \emph{modal normal form} for
the one-dimensional fragments is also---we believe---a nice contribution.
%
%The structure of the decidability proof is outlined in Section \ref{outline}.
%
%The proof is based on an encoding of information concerning $k$-ary
%
%relations by unary relations and a certain hypertorus construction.
%
%The proof method is novel and robust, 
%
%and can
%
%quite likely
%
%be easily modified to give other decidability
%
%results concerning extensions of two-variable logic with
%
%one-dimensional quantification. 
%
%We aim to give rigorous and fully self-contained proofs of all our results.
%

%
%The classical decision problem has been solved, but  a lot can and 
%
%should be done with first-order fragments not based on quantifier prefixes.
%
%We aim to take some steps into this direction.
%

%
\section{Preliminaries}
Let $\mathbb{Z}_+$ denote the set of positive integers.
Let $\mathcal{T}$ denote a \emph{complete
relational vocabulary}, i.e., $\mathcal{T} := \bigcup_{k\, \in\, \mathbb{Z}_+} \tau_k$,
where $\tau_k$ denotes a countably infinite
set of $k$-ary relation symbols.
Each vocabulary $\tau$ we consider below
is assumed to be a subset of $\mathcal{T}$.
%
%Let $\tau\subseteq\mathcal{T}$.
%
A $\tau$-formula of first-order logic is a formula whose set of non-logical
symbols is a subset of $\tau$. A $\tau$-model
is a model whose set of interpreted non-logical
symbols is $\tau$.
Let $\mathrm{VAR}$ denote the countably
infinite set $\{\, x_i\ |\ i\in\mathbb{Z}_+\ \}$
of \emph{variable symbols}.
We define the set of $\mathcal{T}$-formulae
of first-order logic in the usual way,
assuming that all variable symbols 
are from $\mathrm{VAR}$.
Below we use \emph{meta-variables}
$x,y,z$ in order
to denote variables in $\mathrm{VAR}$.
Also symbols of the type $y_i$ and $z_i$, where $i\in\mathbb{Z}_+$, will be used 
as meta-variables.
In addition to meta-variables, we also need to directly use the variables $x_i\in\mathrm{VAR}$ below.
Note that for example the meta-variables $y_1$ and $y_2$ may denote the
same variable in $\mathrm{VAR}$, while the 
variables $x_1,x_2\in\mathrm{VAR}$ of course
simply \emph{are} different variables.
Let $R$ be a $k$-ary relation symbol,  $k\in\mathbb{Z}_+$.
An atomic formula $R(y_{1},...,y_{k})$ is called \emph{$m$-ary}
if there are exactly $m$ distinct variables in the set $\{y_{1},...,y_{k}\}$.
For example, if $x,y$ are distinct variables, then $S(x,y)$ and $T(y,x,y,y)$
are binary, and $U(x_1,x_6,x_3,x_2,x_1,x_6)$ is $4$-ary.
An \emph{$m$-ary $\tau$-atom} is an atomic formula
that is $m$-ary, and the relation symbol of the formula is in $\tau$.
Let $\tau \subseteq \mathcal{T}$.
Let $\M$ a $\tau$-model
with the domain $M$.
A function $f$ that maps some subset of $\mathrm{VAR}$ into $M$
is an \emph{assignment}.
Let $\varphi$ be a $\tau$-formula with the free
variables $y_1,...,y_k$. Let $f$ be an assignment
that interprets the free variables of $\varphi$ in $M$.
We write $\M,f\models\varphi$ if $\M$ satisfies $\varphi$
when the free variables of $\varphi$ are interpreted according to $f$.
%
%Two $\tau$-formulae $\varphi$ and $\psi$ of first-order logic are \emph{equivalent},
%
%if the equivalence $\M,f\models\varphi\ \Leftrightarrow\ \M,f\models\psi$ holds
%
%for all $\tau$-models $\M$ and assignments $f$ that interpret the free variables in both formulae.
%
Let $u_1,...,u_k\in M$. Let  $\varphi$ be a $\tau$-formula whose
free variables are among $y_1,...,y_k$.
We write $\M,\frac{(u_1,...,u_k)}{(y_1,...,y_k)}\models \varphi$
if $\M,f\models\varphi$ for some assignment $f$ such
that $f(y_i) = u_i$ for each $i\in\{1,..,k\}$.
By a \emph{non-empty conjunction} we 
mean a finite conjunction with at least one 
conjunct; for example $R(x,y) \wedge \exists y P(y)$ and $\top$
are non-empty conjunctions.
%

%
%Let $\chi$ and $\chi'$ be formulae
%
%such that the sets of 
%
%non-logical symbols occurring
%
%in $\chi$ and $\chi'$ 
%
%are $\sigma\subseteq \mathcal{T}$
%
%and $\sigma'\subseteq\mathcal{T}$, respectively.
%
%Let $U$ and $U'$ be the sets of free variable symbols
%
%of  $\chi$ and $\chi'$, respectively.
%
%As one would expect, the formulae $\chi$ and $\chi'$ are 
%
%\emph{equivalent}, if the equivalence $\M,f\models\chi\ \Leftrightarrow\ \M,f\models\chi'$
%
%holds for all $(\sigma\cup\sigma')$-models $\M$ and 
%
%assignments $f$ that interpret the set $U\cup U'$ of variables
%
%in the domain of $\M$.
%

%
By \emph{monadic first-order logic}, or $\mathrm{MFO}$, we mean the
fragment of first-order logic \emph{without equality},
where formulae contain only unary relation symbols. 
Let $k\in\mathbb{Z}_+$. A \emph{$k$-permutation} is a
bijection $\sigma:\{1,...,k\}\rightarrow\{1,...,k\}$.
When $k$ is irrelevant or clear
from the context, we simply talk about
permutations.
%
%instead of $k$-permutations.
%
%If $(u_1,...,u_k)$ is a tuple, then
%
%any tuple $(u_{\mu(1)},...,u_{\mu(k)})$,
%
%where $\mu$ is some $k$-permutation,
%
%is called a \emph{permutation of
%
%$(u_1,...,u_k)$}.
%

%
Let $k\in\mathbb{Z}_+$.
We let $(u,...,u)_k$ and $u^k$ denote the
$k$-tuple containing $k$ copies of the object $u$.
When $k=1$, this tuple is identified with the object $u$.
Let $l$ and $k\leq l$ be positive integers.
Let $S$ be a set, and let $(s_1,...,s_l)\in S^l$ be a tuple.
We let $(s_1,...,s_l)\upharpoonright k$ denote
the tuple $(s_1,...,s_k)$.
Let $R\subseteq S^l$ be an $l$-ary relation.
We let $R\upharpoonright k$
denote the $k$-ary relation $R'\subseteq S^k$
defined such that for each $(s_1,...,s_k)\in S^k$,
we have $(s_1,...,s_k)\in R'$ iff
$(s_1,...,s_k) = (u_1,...,u_l)\upharpoonright k$ for 
some tuple $(u_1,...,u_l)\in R$.
Recall that $\bigwedge\emptyset$ is assumed to be always true,
while $\bigvee\emptyset$ is always false.
\section{The one-dimensional fragment}\label{sectionthree}
We shall next define the \emph{uniform one-dimensional}
fragment $\mathrm{UF}_1$ of first-order logic.
Let $Y = \{y_1,...,y_n\}$ be a set of variable symbols,
%
% such that $|Y|\geq 2$.
%
and let $R$ be a
$k$-ary
relation symbol.
%
%where $k\geq 2$.
%
An atomic formula $R(y_{i_1},...,y_{i_k})$ is called a \emph{$Y$-atom}
if $\{y_{i_1},...,y_{i_k}\} = Y$.
A finite set of $Y$-atoms is called a \emph{$Y$-uniform set}.
When $Y$ is irrelevant or known from the context, we may
simply talk about a \emph{uniform set}.
%
%If $\tau$ is a vocabulary
%
%and $R(y_{i_1},...,y_{i_k})$ is $\tau$-atom, where $R$ belongs
%
%
%
For example, assuming that $x,y,z$ are distinct variables,
$\{T(x,y), S(y,x)\}$ and $\{R(x,x,y),R(y,y,x), S(y,x)\}$ are uniform sets,
while $\{R(x,y,z),R(x,y,y)\}$
%
%and $\{R(y,y,y), S(y,y)\}$
%
is not.
The empty set is a $\emptyset$-uniform set.
Let $\tau\subseteq\mathcal{T}$.
The set $\mathrm{UF}_1(\tau)$, or the set of $\tau$-formulae of
the one-dimensional fragment, is
the smallest set $\mathcal{F}$ satisfying the following conditions.
\begin{enumerate}
%
%\item
%$\bot,\top\in \mathcal{F}$.
%
\item
Every unary $\tau$-atom is in $\mathcal{F}$, and $\bot,\top\in \mathcal{F}$.
\item
If $\varphi\in \mathcal{F}$, then $\neg\varphi\in\mathcal{F}$.
If $\varphi_1,\varphi_2\in \mathcal{F}$,
then $(\varphi_1\wedge\varphi_2)\in \mathcal{F}$.
\item
Let $Y = \{y_1,...,y_k\}$ be a set of variable symbols.
Let $U$ be a finite set of
formulae $\psi\in\mathcal{F}$ whose
free variables are in $Y$.
Let $V\subseteq Y$.
Let $F$ be a $V$-uniform set of $\tau$-atoms.
Let $\varphi$ be any Boolean combination of formulae in $U\cup F$.
Then $\exists y_2...\exists y_k\, \varphi\, \in \mathcal{F}$.
\item
If $\varphi\in\mathcal{F}$,
%
%has the free variable $y$,
%
then $\exists y\, \varphi\, \in\, \mathcal{F}$.
\end{enumerate}
Notice that there is no equality symbol in the language.
Notice also that the formation rule (iv)  is strictly speaking not needed
since the rule (iii) covers it. Concerning the rule (i),
notice that also atoms of the type $S(x,...,x)_k$, where $k\not=1$,
are legitimate formulae.
Let $\mathrm{UF}_1$ denote the set $\mathrm{UF}_1(\mathcal{T})$.
%
%of all sets $\mathrm{UF}_1(\tau)$, where $\tau$ is a
%
%finite subset of $\mathcal{T}$.
%

%
\subsection{Intuitions underlying the decidability proof}\label{outline}
We show decidability of the satisfiability and 
finite satisfiability problems of $\mathrm{UF}_1$
by translating $\mathrm{UF}_1$-formulae
into equisatisfiable $\mathrm{MFO}$-formulae.
We first translate $\mathrm{UF}_1$ into a
logic $\mathrm{DUF}_1$. This logic
is a normal form for $\mathrm{UF}_1$ such that all literals of arities
higher than one appear in simple conjunctions, as for example
in the formula $\exists y \exists z\bigl( R(x,z,y,z)\wedge\neg S(y,x,z) \wedge \varphi(y)\bigr)$.
The logic $\mathrm{DUF}_1$ is then translated into a
modal logic $\mathrm{MUF}_1$, which is an
essentially variable-free formalism for $\mathrm{DUF}_1$.
%
%The fact that $\mathrm{UF}_1$ has such a modal representation is a
%
%by-product of our proof and one of the contributions of this paper.
%
%We conjecture that the complexity of the satisfiability problem of $\mathrm{MUF}_1$ is
%
%different from that of $\mathrm{UF}_1$, but for the sake of simplicity,
%
%we shall not take up complexity issues in this article.
%
In Section \ref{decidabilitysection}
we show how formulae of the logic $\mathrm{MUF}_1$ are
translated into equisatisfiable formulae of $\mathrm{MFO}$,
which is well-known to have the finite model property.
%
%We shall describe the intuition behind the use of the
%
%hypertorus below when presenting the related arguments,
%
%but the overall idea is that 
%

%
The semantics of $\mathrm{MUF}_1$  is
defined (see Section \ref{modal}) 
with respect to pointed models $(\M,u)$,
where $u\in M = \mathit{Dom}(\M)$.
%
%Each $\mathrm{MUF}_1$-formula $\varphi$ canonically defines
%
If $\varphi$ is a formula of $\mathrm{MUF}_1$, we let
$\Vert\varphi\Vert^{\M}$ denote
the set $\{\, v\in M\ |\ (\M,v)\models\varphi\, \}$.
In Section \ref{decidabilitysection}
we fix a $\mathrm{MUF}_1$- formula $\psi$
and translate it to an $\mathrm{MFO}$-formula $\psi^*(x)$.
We prove that if $(\M,v)\models\psi$, then $\psi^*(x)$ is satisfied in a
model $\T$, whose domain is $M\times T$,
where $T$ is the domain of an \emph{$m$-dimensional hypertorus
of arity $l$}. Such a hypertorus is a structure
$(T,R_1,...,R_m)$, where the $m$ different
relations $R_i$ are all $l$-ary.
Intuitively, the domain of $\T$ consists of several copies of $M$,
one copy for each point of the hypertorus.
Let $\mathrm{SUB}_{\psi}$ denote the set of subformulae of $\psi$.
The vocabulary of $\T$ consists of monadic predicates $P_{\alpha}$ and $P_t$,
where $\alpha\in\mathrm{SUB}_{\psi}$ and $t\in T$.
The predicates are interpreted such that $P_{\alpha}^{\T} := \Vert\alpha \Vert^{\M}\times T$
and $P_t^{\T} := M\times\{t\}$.
%
%Intuitively, the predicate $P_{\alpha}$ encodes information
%
%about the set $\Vert\alpha\Vert^{\M}$.
%
%The predicates $P_t$
%
%These predicates are used in order to
%
%encode information about the
%
%\emph{accessibility relations} that occur
%
%in the modal formula $\psi$.
%
%The accessibility relations are, essentially,
%
%conjunctions of literals of some arity $k\geq 2$.
%

%
\begin{comment}
%
We will give a rigorous and fully self-contained 
%
proof of the decidability of $\mathrm{UF}_1$,
%
but to get a proper grasp of the background
%
intuition, the reader is assumed to be familiar with
%
the notion of a \emph{largest filtration}
%
(see \cite{IEEEonedimensional:venema}).
%
\end{comment}
%

%
We will give a \emph{rigorous and self-contained}
proof of the decidability of $\mathrm{UF}_1$,
but to get an (admittedly very rough) initial idea of
some of the related background
intuitions, consider the following construction.
(\emph{It may also help to refer back to this section while
internalizing the proof.})
Consider a formula of ordinary unimodal logic $\varphi$
and a Kripke model $\mathfrak{N}$.
We can \emph{maximize} the accessibility relation $R$ of $\mathfrak{N}$
by defining a new relation $S\subseteq N\times N$ such
that $(u,v)\in S$ iff for all formulae $\Diamond\beta\in\mathrm{SUB}_{\varphi}$,\vspace{1mm}
we have
\vspace{1mm}
$\hspace{3.5cm}(\N,v)\models\beta\, \Rightarrow\, (\N,u)\models\Diamond\beta.\hfill (1)$\\
If we replace $R$ by $S$ in $\mathfrak{N}$, then each point $w$
in the new model will satisfy exactly the same subformulae of $\varphi$
as $w$ satisfied in the old model. 
Thus we can \emph{encode information concerning $R$}
by using the (so-called filtration) condition given by Equation 1.
The equation talks about the \emph{sets} $\Vert\beta\Vert^{\mathfrak{N}}$
and $\Vert\Diamond\beta\Vert^{\mathfrak{N}}$, and thus
it turns out that we can encode the information given by the
equation by \emph{monadic predicates} $P_{\beta}$ and $P_{\Diamond\beta}$
corresponding to the sets $\Vert\beta\Vert^{\mathfrak{N}}$
and $\Vert\Diamond\beta\Vert^{\mathfrak{N}}$ (cf. the 
formulae $\mathit{PreCons}_{\delta}$ and $\mathit{Cons}_{\delta}$
in Section \ref{translationsection}). \emph{This way we can
encode information concerning accessibility relations
by using formulae of $\mathrm{MFO}$.}
%
\begin{comment}
%
The translation involves a model construction
%
based on an \emph{$m$-dimensional hypertorus
%
of arity $l$}. This is a structure
%
$(T,R_1,...,R_m)$, where the $m$ different
%
relations $R_i$ are all $l$-ary.
%
If $\psi$ is satisfied by a pointed model $(\M,u)$, the formula
%
$\psi^*(x)$ will be satisfied in a model $\T$ that
%
essentially contains a copy of $\M$ for each point of the hypertorus.
%
%
%
The modal \emph{accessibility relations} occurring in $\psi$
%
will essentially be conjunctions of $k$-ary literals, $k\geq 2$.
%
Information concerning the accessibility relations of $\psi$
%
will be encoded in the model $\T$ with the help of unary
%
predicates that encode the structure of the relations $R_i$.
%
The translation involves a model construction
%
based on an \emph{$m$-dimensional hypertorus
%
of arity $l$}. This is a structure
%
$(T,R_1,...,R_m)$, where the $m$ different
%
relations $R_i$ are all $l$-ary.
%
\end{comment}
%

%
This construction does not work if one tries to maximize \emph{both} a
binary relation $R$ \emph{and its complement} $\overline{R}$
at the same time: the problem is that the maximized
relations $S$ and $\overline{S}$ will not necessarily be 
complements of each other. For this reason we need to \emph{make enough room} for 
maximizing accessibility relations. Below we will simultaneously maximize several types of
accessibility relations that cannot be allowed to intersect. 
Thus we need to use an \emph{$n$-dimensional}
hypertorus (rather than a usual 2D torus).
Each $k$-ary accessibility relation type $\delta$ of the
translated $\mathrm{MUF}_1$-formula
will be reserved a sequence $\overline{r} := (M\times\{t_1\},..., M\times\{t_k\})$
of copies of $M$ from the domain of $\T$.
Information concerning $\delta$ will be
encoded into this sequence $\overline{r}$ of models.

\subsection{Diagrams}
%

%
%%%%%%%%%%%%%%%%%%%%%%%%%%%%%%%%%%
%
%\subsection{Diagrams}
%

%
Let $\tau\subseteq\mathcal{T}$ be a \emph{finite} vocabulary.
%
%Assume that $\tau$ contains at least one relation symbol of some
%
%arity higher than one.
%
Let $k\geq 2$ be an integer,
and let $Y = \{y_1,...,y_k\}$ be a set of \emph{distinct} variable
symbols. A \emph{uniform $k$-ary $\tau$-diagram}
is a maximal satisfiable set of $Y$-atoms and negated
$Y$-atoms of the vocabulary $\tau$.
(The empty set is \emph{not} considered to be a uniform $k$-ary $\tau$-diagram;
this case is relevant when $\tau$ contains no relation
symbols of the arity $k$ or higher.)
For example, let $\tau = \{P, R, S\}$, where the arities
of $P$, $R$, $S$ are $1$, $2$, $3$, respectively.
Now
$
\{ R(x,y),\neg R(y,x),$ $
S(y,x,x),$ $S(x,y,x), \neg S(x,x,y),
S(x,y,y),$ $ \neg S(y,x,y), S(y,y,x)\}
$
is a uniform binary $\tau$-diagram.
Here we assume that $x$ and $y$ are distinct
variables.
%
\begin{comment}
%
The set
%
%
%
$
%
\{S(x,y,z), S(x,z,y),$ $ \neg S(y,x,z),
%
S(y,z,x), \neg S(z,x,y), S(z,y,x)\}
%
$
%
%
%
is a uniform ternary $\tau$-diagram.
%
Again $x,y,z$ are assumed distinct.
%
\end{comment}
%

%
Let $\tau\subseteq\mathcal{T}$ be a \emph{finite} vocabulary.
The set $\mathrm{DUF}_1(\tau)$  is
the smallest set $\mathcal{F}$ satisfying the following conditions.
\begin{enumerate}
\item
Every unary $\tau$-atom is in $\mathcal{F}$.
Also $\bot,\top\in \mathcal{F}$. 
\item
If $\varphi\in \mathcal{F}$, then $\neg\varphi\in\mathcal{F}$.
If $\varphi_1,\varphi_2\in \mathcal{F}$,
then $(\varphi_1\wedge\varphi_2)\in \mathcal{F}$.
\item
Let $\delta$ be a uniform $k$-ary $\tau$-diagram in 
the variables $y_1$,...,$y_k$, where $k\geq 2$. Let $\varphi$ be a
non-empty conjunction of a finite set $U$ of formulae in $\mathcal{F}$
%
%such that each formula in $U$ has at most one free variable, and
%
whose free variables are among $y_1$,...,$y_k$.
Then $\exists y_2...\exists y_k\, \bigl(\, \bigwedge\delta\, \wedge\, \varphi\, \bigr)\, \in\, \mathcal{F}$.
%
%Also $\exists y_2...\exists y_k\, \delta\, \in \mathcal{F}$.
%
\item
If $\varphi\in\mathcal{F}$
has at most one free variable, $y$, 
then $\exists y\, \varphi\in \mathcal{F}$.
\end{enumerate}
Let $\mathrm{DUF}_1$ denote the set of exactly
all formulae $\varphi$ such that for some finite $\tau\subseteq\mathcal{T}$,
we have $\varphi\in\mathrm{DUF}_1(\tau)$.
$\mathrm{UF}_1$ translates effectively into $\mathrm{DUF}_1$;
see the appendix for the proof.
Here we briefly \emph{sketch} the principal idea behind the translation.
Consider a $\mathrm{UF}_1$-formula $\exists\overline{y}\, \psi$, where
$\overline{y}$ denotes a tuple of variables.
Put $\psi$ into disjunctive normal form $\psi_1\vee...\vee\psi_k$.
Thus $\exists\overline{y}\, \psi$ translates into the formula
$\exists\overline{y}\, \psi_1\vee...\vee\exists\overline{y}\, \psi_k$,
where the formulae $\psi_i$ are conjunctions.
Each conjunction $\psi_i$ is equivalent to a disjunction $\psi_{i,1}\vee...\vee\psi_{i,m}$,
where $\psi_{i,j}$ is of the desired type $\bigl(\, \bigwedge\delta\wedge\varphi\, \bigr)$.
\subsection{Hypertori}
We next define a class of hypertori. It may help to have a look
at Lemma \ref{toruslemma} before internalizing the definition.
Let $l\geq 2$ and $n\geq 2$ be integers.
%
%Let $m\in\{1,...,n\}$, and
%
Define 
$T := \{1,...,n\} \times \{1,...,l\} \times\{0,1,2\}.$
Let $(t_1,...,t_l)\in T^l$ be a tuple.
Let $t_1 = (m,m',m'')$.
Let $j\in \{1,...,n\}$.
For each $i\in \{2,...,l\}$,
let $t_i = (p,p',p'')$ such that
the following conditions hold.
\begin{enumerate}
\item
%Let $t_1 = (m,m',m'')$.
%
%Let $i\in \{2,...,l\}$ and $t_i = (p,p',p'')$.
%
$p - m \equiv j-1 \mod n$.
\item
%Let $t_1 = (m,m',m'')$.
%
%Let $i\in \{2,...,l\}$ and $t_i = (p,p',p'')$.
%
$p' - m' \equiv i-1 \mod l$.
\item
%Let $t_1 = (m,m',m'')$.
%
%Let $i\in \{2,...,l\}$ and $t_i = (p,p',p'')$.
%
$p'' - m''\equiv 1 \mod 3$.
\end{enumerate}
Let us call such a tuple $(t_1,...,t_l) \in T^l$ the
\emph{$j$-th good $l$-ary sequence originating
from $t_1$}.
Define the relation $R_j \subseteq T^l$ such that
$(s_1,...,s_l)\in R_j$ iff $(s_1,...,s_l)$ is the $j$-th
good $l$-ary sequence originating from $s_1$.
We call the structure $\bigl(T,R_1,...,R_n\bigr)$ the
\emph{$n$-dimensional
hypertorus of the arity $l$}.
%
%The following lemma is straightforward to prove.

%
\begin{lemma}\label{toruslemma}
Let $\bigl(T,R_1,...,R_n\bigr)$ be an $n$-dimensional
hypertorus of the arity $l$. 
Let $j\in \{1,...,n\}$ and $k\in\{2,...,l\}$.
Then the following conditions hold.
\begin{enumerate}
\item
For each $t\in T$, 
there exists exactly one tuple $(s_1,...,s_k)\in R_j\upharpoonright k$
such that $t = s_1$. We have $s_i\not = s_j$
for all $i,j\in\{1,...,k\}$ such
that $i\not= j$.
\item
Let $(s_1,...,s_k)\in R_j\upharpoonright k$.
Let $\sigma$ be a $k$-permutation, and let $i\in\{1,...,n\}\setminus\{j\}$.
Then $(s_{\sigma(1)},...,s_{\sigma(k)})\not\in R_i\upharpoonright k$.
\item
Let $(s_1,...,s_k)\in R_j\upharpoonright k$.
Let $\mu$ be any $k$-permutation other than the identity permutation.
Then $(s_{\mu(1)},...,s_{\mu(k)})\not\in R_j\upharpoonright k$.
\end{enumerate}
\end{lemma}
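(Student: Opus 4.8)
The plan is to unpack the definition of $R_j$ in terms of the three coordinate components and verify each of the three conditions by direct modular arithmetic. Recall that an element of $T$ is a triple $(m,m',m'')$ with $m\in\{1,\dots,n\}$, $m'\in\{1,\dots,l\}$, $m''\in\{0,1,2\}$, and that the $j$-th good sequence originating from $t_1=(m,m',m'')$ is the unique sequence whose $i$-th entry $t_i=(p,p',p'')$ satisfies the three congruences $p-m\equiv j-1\pmod n$, $p'-m'\equiv i-1\pmod l$, and $p''-m''\equiv 1\pmod 3$. First I would observe that, since each component is drawn from a complete residue system modulo the relevant modulus ($n$, $l$, and $3$ respectively), each congruence determines the corresponding component of $t_i$ \emph{uniquely} given $t_1$. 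This immediately yields the existence-and-uniqueness half of condition (i): for a fixed $t\in T$ and fixed $k$, the truncation $(s_1,\dots,s_k)$ with $s_1=t$ is completely forced. The inequality $s_i\neq s_{i'}$ for $i\neq i'$ then follows by examining the \emph{middle} component: for $s_i$ we have $s_i'-m'\equiv i-1\pmod l$ and for $s_{i'}$ we have $s_{i'}'-m'\equiv i'-1\pmod l$, so $s_i'\equiv s_{i'}'\pmod l$ would force $i\equiv i'\pmod l$; since $i,i'\in\{1,\dots,k\}\subseteq\{1,\dots,l\}$ are distinct and $k\le l$, this is impossible, hence the middle components already differ and so do the triples.

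For condition (ii) I would argue using the \emph{first} component, which tracks the dimension index $j$. Suppose $(s_1,\dots,s_k)\in R_j\upharpoonright k$ and, toward a contradiction, that some permutation $(s_{\sigma(1)},\dots,s_{\sigma(k)})$ lies in $R_i\upharpoonright k$ for some $i\neq j$. Let the first components of $s_1,\dots,s_k$ be $m=m_1,m_2,\dots,m_k$; membership in $R_j$ forces $m_p-m_1\equiv j-1\pmod n$ for every $p$, in particular $m_2-m_1\equiv j-1$. Membership of the $\sigma$-permuted tuple in $R_i$ similarly relates the first components of $s_{\sigma(1)}$ and $s_{\sigma(2)}$ via the shift $i-1$. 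The key point is that in $R_j$ the pairwise differences of first components are all determined by $j$ (indeed every consecutive difference is $\equiv j-1$), so comparing the constraints imposed by the two memberships yields a congruence forcing $j\equiv i\pmod n$; since $i,j\in\{1,\dots,n\}$ this gives $i=j$, contradicting $i\neq j$. I expect the mild subtlety here to be bookkeeping the effect of an \emph{arbitrary} permutation $\sigma$ on these first-component differences, so I would set up the differences carefully and track how $\sigma$ reindexes them.

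For condition (iii) I would again use the \emph{second} component, exactly as in the inequality argument of (i), but now to detect that a nontrivial permutation breaks $R_j$-membership. If $(s_1,\dots,s_k)\in R_j\upharpoonright k$ and $\mu$ is a permutation with $(s_{\mu(1)},\dots,s_{\mu(k)})\in R_j\upharpoonright k$, then in the original tuple the second component of $s_p$ satisfies $s_p'-s_1'\equiv p-1\pmod l$, while in the permuted tuple we would need $s_{\mu(p)}'-s_{\mu(1)}'\equiv p-1\pmod l$. Combining these, the middle components act as a "position stamp'' that is rigid modulo $l$; requiring both stampings to be consistent forces $\mu$ to fix the relative order, and since all positions lie in $\{1,\dots,k\}$ with $k\le l$ (so the stamps $p-1$ are pairwise distinct mod $l$), this forces $\mu$ to be the identity, contradicting the hypothesis.

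The only genuine obstacle I anticipate is a uniformity issue across the three parts: the relations $R_j\upharpoonright k$ are defined on $k$-truncations, and a permutation $\sigma$ or $\mu$ acts on indices $\{1,\dots,k\}$, yet the membership conditions are phrased via the originating element $s_1$ (respectively $s_{\sigma(1)}$, $s_{\mu(1)}$). I therefore need to be careful that in each part the ``originating'' element after permutation is handled correctly, since $R_j\upharpoonright k$ requires the first coordinate of the tuple to be its origin. In part (iii), for instance, I must check the case where $\mu(1)\neq 1$ separately, since then the origin of the permuted tuple changes; but the third-component congruence $p''-m''\equiv 1\pmod 3$ shows every entry's third component equals $s_1''+1\pmod 3$, i.e.\ all entries $s_2,\dots,s_k$ share a common third component distinct from $s_1''$, which pins down which entry can serve as the origin and makes the case analysis clean. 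Once these indexing conventions are fixed, each of the three conditions reduces to a one-line modular computation.
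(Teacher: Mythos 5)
Your proof is correct, and it is precisely the direct modular-arithmetic verification that the paper leaves as ``Straightforward'': uniqueness and distinctness from the three congruences, the third coordinate pinning down which entry of a permuted tuple can serve as the origin, and then the first (resp.\ second) coordinate forcing $i=j$ in (ii) (resp.\ $\mu=\mathrm{id}$ in (iii)). One slip worth correcting: by the definition, all non-origin entries $t_2,\dots,t_l$ share the \emph{same} first component (the congruence $p-m\equiv j-1\pmod n$ holds with the same $m$ for every $i\in\{2,\dots,l\}$), so your parenthetical ``every consecutive difference is $\equiv j-1$'' is false---only differences from the origin are $\equiv j-1$, while differences between non-origin entries are $\equiv 0$. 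This does not damage the argument, because, as you note at the end, the third-coordinate congruence first forces $\sigma(1)=1$ (any permuted tuple with a different initial entry contains $s_1$ in a non-initial position with third-component difference $2\not\equiv 1\pmod 3$), after which comparing origin-to-entry differences yields $i\equiv j\pmod n$ exactly as you claim; without that preliminary step the first-component comparison alone would be inconclusive (e.g.\ for $\sigma(1)\neq 1$, $\sigma(2)=1$ it only yields $i+j\equiv 2\pmod n$, which admits solutions with $i\neq j$).
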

\begin{proof}
Straightforward.
\end{proof}
In the rest of the article, we let $\T(n,l)$ denote the $n$-dimensional
hypertorus of the arity $l$. We let $T(n,l)$ and $R_j(n,l)$ denote,
respectively, the domain and the relation $R_j$ of $\T(n,l)$.
%

%
%%%%%%%%%%%%%%%%%%%%%%%%%%%%%%%%%%%%%%
%

%
\subsection{Translation into a modal logic}\label{modal}
Let $\tau\subseteq\mathcal{T}$ be a \emph{finite} vocabulary,
and let $k\geq 2$ be an integer.
Let $\M$ be a $\tau$-model with the domain $M$.
Let $\delta$ be a uniform $k$-ary $\tau$-diagram
in the variables $x_1,...,x_k$.
Notice that here we 
use the standard variables $x_1,...,x_k$ from $\mathrm{VAR}$.
The diagram $\delta$ is a
\emph{standard uniform $k$-ary $\tau$-diagram}.
We define $\Vert \delta \Vert^{\M}$ to be the
relation
$\{\, (u_1,...,u_k)\in  M^k\ |\
\M,\frac{(u_1,...,u_k)}{(x_1,...,x_k)}\models \bigwedge \delta\ \}.$
Standard variables are needed in order to uniquely specify
the \emph{order of elements} in tuples of $\Vert \delta \Vert^{\M}$.
Let $\delta$ be a standard uniform $k$-ary $\tau$-diagram.
Let $q\leq k$ be a positive integer.
Let $t:\{1,...,k\}\rightarrow \{1,...,q\}$ be a surjection.
We let $\delta/ t$ denote the set obtained from $\delta$
by replacing each variable $x_i$ by $x_{t(i)}$.
Let $k$ and $q$ be positive integers such that $2\leq q\leq k$.
Let $\eta$ and $\delta$ be standard uniform $q$-ary and $k$-ary $\tau$-diagrams, respectively.
%
%Assume $\tau$ contains symbols of some arity $p\geq k$.
%
Let $f:\{1,...,k\}\rightarrow\{1,...,q\}$
be a surjection. Assume that $\bigwedge \eta \models \bigwedge\delta/f$, i.e.,
the implication $\M,h\models \eta \Rightarrow\ \M,h\models \delta/f$
holds for each $\tau$-model $\M$
and each assignment $h$ interpreting the variables $x_1,...,x_q$ in the domain of $\M$.
Then we write $\eta\, \leq_{f}\, \delta$.
We then define a modal logic
that provides an essentially variable-free
representation of\ \ $\mathrm{UF}_1$.
Define the set $\mathrm{MUF}_1(\tau)$
to be the smallest set $\mathcal{F}$ such that the
following conditions are satisfied.
\begin{enumerate}
%
%\item
%$\bot,\top\in \mathcal{F}$.
%
\item
If $S\in\tau$ is a relation symbol of any arity, then $S\in\mathcal{F}$.
Also $\bot,\top\in \mathcal{F}$.
\item
If $\varphi\in\mathcal{F}$, then $\neg\varphi\in\mathcal{F}$.
If $\varphi_1,\varphi_2\in\mathcal{F}$, then $(\varphi_1\wedge\varphi_2)\in\mathcal{F}$.
\item
If $\varphi_1,...,\varphi_k\in\mathcal{F}$ and $\delta$ is a standard uniform $k$-ary $\tau$-diagram,
then $\langle\delta\rangle(\varphi_1,...,\varphi_k)\in\mathcal{F}$.
\item
If $\varphi\in\mathcal{F}$, then $\langle E\rangle \varphi\in\mathcal{F}$.
(Here $\langle E\rangle$ denotes the \emph{universal modality}; see below for the its semantics.)
\end{enumerate}
The semantics of $\mathrm{MUF}_1(\tau)$ is defined
with respect to \emph{pointed $\sigma$-models} $(\M,w)$, where $\M$ is an ordinary $\sigma$-model
of predicate logic for some vocabulary $\sigma\supseteq \tau$, and $w$ is an
element of the domain $M$ of $\M$.
%
%Such pairs are called \emph{pointed $\sigma$-models}.
%
Obviously we define that $(\M,w)\models\top$
always holds, and that $(\M,w)\models\bot$ never holds.
Let $S\in\tau$ be an $n$-ary relation symbol. We
define $(\M,w)\models S\, \Leftrightarrow\, w^n\in S^{\M}$,
%
%where $w^n$ denotes the tuple in $M^n$ that consists of $n$ copies of the element $w$.
%
where $S^{\M}$ is the interpretation of the relation symbol $S$ in the model $\M$.
The Boolean connectives $\neg$ and $\wedge$ have their usual meaning.
%
%we define $(\M,w)\models \neg\varphi\, \Leftrightarrow (\M,w)\not\models\varphi$
%
%and $(\M,w)\models (\varphi_1\wedge\varphi_2)\ \Leftrightarrow \bigl((\M,w)\models\varphi_1\text{ and }
%
%(\M,w)\models\varphi_2\bigr)$.
%
For formulae of the type $\langle\delta\rangle(\varphi_1,...,\varphi_k)$,
we define that
$(\M,w)\models \langle\delta\rangle(\varphi_1,...,\varphi_k)$
if and only if there exists a tuple $(u_1,...,u_k)\in\ \Vert \delta \Vert^{\M}$ such that
$u_1 = w$ and
$(\M,u_i)\models\varphi_i$ for each $i\in\{1,...,k\}$.
For formulae $\langle E\rangle\varphi$,
we define 
$(\M,w)\models\langle E\rangle\varphi$ if and only if
there exists some $u\in M$ such that $(\M,u)\models\varphi$.
%

%
\begin{comment}
\[
\begin{array}{lll}
%
%(\M,w)\models S\ & \Leftrightarrow & w^n\in S^{\M},\\
%
(\M,w)\models \neg\varphi\ & \Leftrightarrow & (\M,w)\not\models\varphi,\\
%
(\M,w)\models (\varphi_1\wedge\varphi_2)\ & \Leftrightarrow & (\M,w)\models\varphi_1\text{ and }
%
                                                                                                        (\M,w)\models\varphi_2,\\
%
(\M,w)\models \langle\delta\rangle(\varphi_1,...,\varphi_k)\ & \Leftrightarrow &
%
                                                                  \text{there is a tuple }(u_1,...,u_k)\in\ \Vert \delta \Vert^{\M}\text{ such}\\
%
                               & &\text{that } u_1 = w\text{ and }(\M,u_i)\models\varphi_i\text{ for each }i,\\
%
(\M,w)\models\langle E\rangle\varphi\ & \Leftrightarrow & \text{there is some }u\in M\text{ such that }
%
                                                                                                                                           (\M,u)\models\varphi.
%
\end{array}
\]
%
\end{comment}
%
%
%

%
When $\varphi$ is a $\mathrm{MUF}_1(\tau)$-formula and $\M$ a $\sigma$-model
with the domain $M$,
we let $\Vert \varphi \Vert^{\M}$ denote the set
$\{\, u\in M\ |\ (\M,u)\models\varphi\ \}.$
We let $\mathrm{MUF}_1$ denote the union of all sets $\mathrm{MUF}_1(\tau)$,
where $\tau$ is a finite subset of $\mathcal{T}$.
It is very easy to show that there is an effective translation
that turns any formula $\gamma(x)\in\mathrm{DUF}_1$
into a formula $\chi\in\mathrm{MUF}_1$ such that 
$(\M,w)\models\chi\ \Leftrightarrow\ \M,\frac{w}{x}\models\gamma(x)$
for all $\tau$-models $\M$, where $\tau$ is the set of
non-logical symbols in $\gamma(x)$.
(The set of non-logical symbols in 
$\chi$ is contained in $\tau$, and
the formula $\gamma(x)$ can 
either be a sentence or have the free variable $x$.)
%
%Minor non-trivial issues arise in the translation
%
%for example from the fact that $\gamma(x)$ may have 
%
%subformulae of the type $\alpha(y)\wedge\beta(z)$, where $y$ and
%
%$z$ are different variables. Such issues can be very easily dealt with, however.
%
%We leave the details to the journal version of this article.)
%
%The claim is proved in Appendix \ref{uftoduf} (Lemma \ref{duftomuflemma}).
% 

%
%Let $R$ be a relation symbol that occurs as an atom in $\gamma$.
%
%We obtain $\langle E\rangle R\ \leftrightarrow\ $
%

%
%%%%%%%%%%%%%%%%%%%%%%%%%%%%%%%
%___________________Proofs______________________
%%%%%%%%%%%%%%%%%%%%%%%%%%%%%%%%%
%
\section{$\mathrm{UF}_1$ is decidable}\label{decidabilitysection}
Let us \emph{fix} a formula $\psi$ of $\mathrm{MUF}_1$.
We will first define a
translation of $\psi$ to an $\mathrm{MFO}$-formula $\psi^{*}(x)$
in Section \ref{translationsection}.
We will then show in
Sections \ref{firstdirection} and \ref{seconddirection}
that the translation indeed preserves equivalence
of satisfiability over finite models as well as over all models.
Due to the above effective translations from $\mathrm{UF}_1$ to $\mathrm{DUF}_1$
and from $\mathrm{DUF}_1$ to $\mathrm{MUF}_1$,
this implies that the satisfiability and finite
satisfiability problems of $\mathrm{UF}_1$
are decidable.
\subsection{Translating $\mathrm{MUF}_1$ into monadic first-order logic}\label{translationsection}
%

%
%The symbol $\psi$ (and the related formula) 
%
%will remain fixed for all of the rest of the article.
%

%
%If $\psi$ does not contain any formula
%
%subformula of the type $\langle \delta \rangle(\chi_1,...,\chi_k)$,
%
%translation of $\psi$ to an equisatisfiable formula
%
%of monadic first-order logic is trivial.
%
We assume, w.l.o.g.,, that $\psi$ contains
at least one subformula of the type $\langle \delta \rangle(\chi_1,\chi_2)$.
If not, we redefine $\psi$.
%
%and make sure a subformula $\langle \delta \rangle(\chi_1,\chi_k)\vee
%
%\neg \langle \delta \rangle(\chi_1,\chi_2)$ occurs in the resulting formula.
%
The vocabulary of $\psi$ may of course grow.
We also assume, w.l.o.g.,
that $\psi$ does not contain occurrences of
the symbols $\top$, $\bot$.
%
%these may be replaced
%
%by formulae $P\vee\neg P$ and $P\wedge\neg P$,
%
%where $P$ is a relation symbol.
%
Furthermore, we assume, w.l.o.g., that
if $R$ is a relation symbol occurring in
some diagram of $\psi$,
then $\neg R$ also occurs in $\psi$ as a subformula:
we can of course always add the conjunct $R\vee\neg R$ to $\psi$.
Let $V_{\psi}$ be the set of
all relation symbols in $\psi$, whether they
occur in diagrams or as atomic subformulae;
in fact, due to our assumptions above,
the set of atomic formulae in $\psi$
is equal to $V_{\psi}$. 
%
%Let $A_{\psi}$ be the set of \emph{atomic}
%
%formulae that occur in $\psi$,
%
Let $D_{\psi}$ be the  set of relation
symbols occurring in the diagrams of $\psi$.
Let $V_{\psi}(k)$ denote the set of $k$-ary
relation symbols in $V_{\psi}$.
Define $D_{\psi}(k)$ analogously.
Due to the assumption that $\psi$ contains a
subformula $\langle \delta\rangle(\chi_1,\chi_2)$,
each relation symbol of some arity $m\geq 2$ that occurs as an 
atom in $\psi$, also occurs in the diagram $\delta$.
(This is due to the definition of $\mathrm{MUF}_1$.)
%
%Notice that in addition to containing all unary relation symbols in $\psi$,
%
%the set $A_{\psi}$ may contain relation symbols of higher arities.
%
Thus $V_{\psi}(n) = D_{\psi}(n)$ for all $n > 1$.
Let $\mathcal{M}$ denote the maximum arity of all diagrams in $\psi$.
For each $k\in \{2,...,\mathcal{M}\}$, let $\Delta_k$
denote the set of exactly all standard uniform $k$-ary $V_{\psi}$-diagrams.
Let $\Delta$ denote the union of the sets $\Delta_k$,
where $k\in\{2,...,\mathcal{M}\}$.
Let $\mathcal{N}\, :=\, \mathit{max}\{\ | \Delta_k| \ |\ k\in\{2,...,\mathcal{M}\}\ \}$.
Recall that $T(\mathcal{N},\mathcal{M})$ denotes the domain 
of the $\mathcal{N}$-dimensional hypertorus of the
arity $\mathcal{M}$.
%
%Fix a fresh unary relation symbol $P_t$
%
%for each $t\in T(\mathcal{N},\mathcal{M})$.
%
For each $k\in\{2,...,\mathcal{M}\}$, define an injection
$b_k:\Delta_k \longrightarrow \{\, R_1(\mathcal{N},\mathcal{M}),
...,R_{\mathcal{N}}(\mathcal{N},\mathcal{M})\, \}.$
For a $k$-ary diagram $\delta\in\Delta_k$,  let
$T_{\delta}$ denote the
$k$-ary relation $\bigl(b_k(\delta)\bigr)\upharpoonright k$.
Let $\mathrm{SUB}_{\psi}$ denote the
set of subformulae of the formula $\psi$.
Fix fresh unary relation symbols $P_{\alpha}$ and $P_t$
for each formula $\alpha \in \mathrm{SUB}_{\psi}$
and torus point $t\in T(\mathcal{N},\mathcal{M})$.
The vocabulary of the translation $\psi^{*}(x)$ of $\psi$ will be the set
$\{\, P_{\alpha}\ |\ \alpha\in\mathrm{SUB}_{\psi}\ \}
\cup \{\, P_{t}\ |\ t\in T(\mathcal{N},\mathcal{M})\ \}.$
We let $V^*$ denote this set.
We shall next define a collection of
auxiliary formulae needed in
order to define $\psi^{*}(x)$.
If a pointed model $(\M,u)$
satisfies $\psi$, then $\psi^*(x)$ will be 
satisfied in a larger model; the related model
construction is defined in 
the beginning of Section \ref{firstdirection}.
The predicates of the type $P_{\alpha}$
will be used to encode information about sets $\Vert\alpha\Vert^{\M}$,
while the predicates $P_t$ encode information about the \emph{diagrams} of $\psi$.
The predicates $P_t$ are crucial when defining a $V_{\psi}$-model $\B$
that satisfies $\psi$ based on a $V^*$-model $\A$ of $\psi^*(x)$ in Section \ref{seconddirection}.
%

%
%Notice that $\exists y(Rxyy)$ implies $\exists yz(R xyz)$.
%
%If $Rxyy$ and $Rxyz$ are diagrams, we say that
%
%they are in the same \emph{diagram class}.
%
%Define the formula $U_{\psi}$, also called $\mathit{Uniq}$ below,
%
%to be a formula that states in the torus that each possibly
%
%redundant tuple $(u_1,...,u_k)$ satisfies exactly one diagram
%
%class.
%

%
%
%
%
%
%
%
%
%

%
Let $\delta \in \Delta_k$.
Define $\mathit{PreCons}_{\delta} (x_1,...,x_k)$ to be the formula
$$\bigwedge\limits_{\langle\delta\rangle(\chi_1,...,\chi_k)\ \in\ \mathrm{SUB}_{\psi}}
\Bigl(P_{\chi_1}(x_{1})\wedge...\wedge P_{\chi_k}(x_{k})\notag              
\rightarrow
\ P_{\langle\delta \rangle (\chi_1\, ,\, ...\, ,\, \chi_k)}(x_{1})\, \Bigr).\notag
$$
%
%
%
%Let $S$ be the set of all diagrams in $\eta \in \Delta$
%
%such that $\delta \leq_f^{V_{\psi}} \eta$ for some
%
%surjection $f:\{1,...,p\}\rightarrow\{1,...,k\}$.
%
Let $\Delta(\delta)$ be the set of pairs $(\eta,f)$,
where $\eta\in\Delta$ is a $p$-ary diagram
for some $p\geq k$, and $f:\{1,...,p\}\rightarrow\{1,...,k\}$
is a surjection such that we have $\delta \leq_f \eta$.
The set $\Delta(\delta)$ is the set of \emph{inverse projections
of $\delta$ in $\Delta$}.
Define 
\begin{align*}
\mathit{Cons}_{\delta} (x_{1},...,x_{k})\                :=
\bigwedge\limits_{(\eta,f)\, \in\, \Delta(\delta)}
                        \mathit{PreCons}_{\eta}(x_{f(1)},...,x_{f(p)}).\notag              
\end{align*}
The following formula is the principal formula that encodes information about
diagrams of $\delta$ (cf. Lemma \ref{firstlemmaofwarmup}). 
\begin{align*}
& \mathit{Diag}_{\delta} (x_1,...,x_k) :=                                                     
 &                                         \bigvee_{(t_1\, ,\, ...\, ,\, t_k)\ \in\ T_{\delta}}
                 P_{t_1}(x_1)\wedge...\wedge P_{t_k}(x_k)\,
                                  \wedge\, \mathit{Cons}_{\delta}(x_1,...,x_k).
\end{align*}
Let $+(\delta)$ denote the set of relation symbols $R$
that occur positively in $\delta$, i.e., there
exists some atom $R(y_1,...,y_n)\in \delta$, where $n$ is the arity of $R$.
Let $-(\delta)$ be the relation symbols $R$
that occur negatively in $\delta$, i.e., $\neg R(y_1,...,y_n)\in \delta$
for some atom $R(y_1,...,y_n)$.
The following three formulae encode
information about atomic formulae in $\psi$. Define 
\[
\begin{array}{rcl}
\mathit{Local}_{\delta}(x)\  &:=& \                                                       
\bigwedge\limits_{R\, \in\, +(\delta)} P_{{}_R}(x)\ \wedge\
\bigwedge\limits_{R\, \in\, -(\delta)} P_{{}_{\neg R}}(x),\\
\mathit{LocalDiag}_{\delta}(x)\ &:= &\ \mathit{Local}_{\delta}(x)\
\rightarrow\ \mathit{PreCons}_{\delta}(x,...,x)_k,\\ 
\text{ }\ \ \ \psi_{\mathit{local}}\ &:=&\ \bigwedge\limits_{
%
%k\, \in\, \mathit{Ar}_{\psi},\  
%
\delta\, \in\, \Delta} \forall x\, \mathit{LocalDiag}_{\delta}(x).
\end{array}
\]
The next formula is essential in the construction of a $V_{\psi}$-model
of $\psi$ from a $V^*$-model of $\psi^*(x)$ in Section
\ref{seconddirection}. The two models have the same domain.
The formula states that each tuple can be interpreted to satisfy
\emph{some} diagram $\delta$ such that information concerning
the unary predicates in $V^*$ is consistent with $\delta$. See
the way $\B$ is defined based on $\A$ in Section \ref{seconddirection}
for further details. Define
$$
\psi_{\mathit{total}}\ :=\ \bigwedge\limits_{k\, \in\,
\{2,...,\mathcal{M}\}}\forall x_1...\forall x_k\bigvee\limits_{\delta\, \in\, \Delta_k}
\mathit{Cons}_{\delta}(x_1,...,x_k).$$
Also the following formula is crucial for the definition of $\B$.
%
% in Section \ref{seconddirection}.
%
%
%
$$\psi_{\mathit{uniq}}\ :=\ \bigwedge\limits_{t,\, s\ \in\
T(\mathcal{N},\mathcal{M}),\ t\not= s}\neg \exists x \bigl(\, P_t(x)\, \wedge\, P_s(x)\, \bigr).$$
Let $\neg \alpha$, $(\beta\wedge\gamma)$, $\langle E\rangle\chi$,
and $\langle \delta \rangle(\chi_1,...,\chi_k)$
be formulae in $\mathrm{SUB}_{\psi}$.
The following formulae recursively encode information
concerning subformulae of $\psi$. Define 
\[
\begin{array}{lll}
%
%\psi_{{}_{S(x)}}\ &:=&\  
%
%\forall x \Bigl(P_{{}_{S}}(x)
%
%\rightarrow S(x)\Bigr),\\
%
%
%
%\psi_{{}_{R(x,...,x)}}\ &:=&\  
%
%\forall x \Bigl(P_{{}_{R}}(x)
%
%\leftrightarrow R(x,...,x)_k \Bigr),\\
%
%
%
\psi_{\neg \alpha}\ &:=&\  
\forall x \Bigl(P_{\neg \alpha}(x)
\leftrightarrow \neg P_{\alpha}(x)\Bigr),\\
\psi_{(\beta \wedge \gamma)}\ &:=&\ \forall x \Bigl(P_{(\beta \wedge \gamma)}(x)
\leftrightarrow \bigl(P_{\beta}(x)
\wedge P_{\gamma}(x)\bigr)\Bigr),\\
\psi_{\langle E\rangle\chi}\ &:=&\  
\forall x \Bigl(P_{\langle E\rangle\chi}(x)
\leftrightarrow \exists y P_{\chi}(y)\Bigr),\\
\psi_{\langle \delta \rangle(\chi_1,...,\chi_k)}\ &:=&\ \forall x_1
\Bigl(P_{\langle \delta \rangle(\chi_1,...,\chi_k)}(x_1)\\ 
& &
\empty\hspace{1cm}
\leftrightarrow \exists x_2...x_k\bigl(
\mathit{Diag}_{\delta}(x_1,...,x_k)\\
& &
\empty\hspace{2cm}
\wedge\, P_{\chi_1}(x_1)\wedge...\wedge P_{\chi_k}(x_k)\, \bigr)\, \Bigr).
\end{array}
\]
Let $\psi_{\mathit{sub}}\, :=\,
\bigwedge\limits_{\alpha\, \in\, \mathrm{SUB}_{\psi}}
\psi_{\alpha}.$
Finally, we define
\begin{align}
\psi^*(x)\ :=\ \psi_{\mathit{total}}\ \wedge\ \psi_{\mathit{uniq}}\ \wedge\ \psi_{\mathit{local}}\
\wedge\ \psi_{\mathit{sub}}\
\ \wedge\ P_{\psi}(x).\notag
\end{align}
%
%
%

%
%
%
%
%
%
%
%
%
%
%
%
%
%
%
%
%
%

%
%%%%%%%%%%%%%%%%%%%%%%%%%%%%%%%%%%%%%%%%%
%_________________________first_direction________________
%%%%%%%%%%%%%%%%%%%%%%%%%%%%%%%%%%%%%%%%
%
\subsection{Satisfiability of $\psi$ implies satisfiability of $\psi^*(x)$}\label{firstdirection}
%

%
\begin{comment}
%
As in the previous section
%
let $l$ be the maximum arity of all diagrams in $\psi$.
%
For each $k\in \{2,...,l\}$, let $\Delta_k$
%
denote the set of standard uniform $k$-ary $V_{\psi}$-diagrams.
%
Let $m$ be the maximum number in
%
the set $\{\, | \Delta_k |\ |\ k\in\{2,...,l\}\ \}$.
%
Recall that $\Delta$ is union of all sets $\Delta_k$
%
such that $k\in\{2,...,l\}$.
%
Recall that $T(\mathcal{N},\mathcal{M})$ is the domain
%
of the $m$-dimensional hypertorus of the arity $l$.
%
\end{comment}
%

%
Fix an arbitrary model $V_{\psi}$-model $\M$
with the domain $M$.
Fix a point $w\in M$.
Assume $(\M,w)\models\psi$.
We shall next construct a model $\T$
with the domain $M\times T(\mathcal{N},\mathcal{M})$.
We then show that $\T,\frac{(w,t)}{x}\models\psi^*(x)$,
where $t$ is a torus point.
If $\M$ is a finite model, then so is $\T$.
The domain $M\times T(\mathcal{N},\mathcal{M})$ of the $V^*$-model $\T$ consists of
copies of $M$, one copy for each torus point $t\in T(\mathcal{N},\mathcal{M})$.
Let us define  interpretations of the symbols in $V^*$.
Consider a symbol $P_{\alpha}$, where $\alpha \in \mathrm{SUB}_{\psi}$.
If $(u,t)\in\mathit{Dom}(\T)$, then
$(u,t)\, \in\, P_\alpha^{\T}\ \Leftrightarrow\ 
u\, \in\ \Vert \alpha \Vert^{\M}$.
Consider then a symbol $P_{t}$, where $t \in T(\mathcal{N},\mathcal{M})$.
If $(u,t')\in\mathit{Dom}(\T)$, then
$(u,t')\, \in\, P_{t}^{\T}\ \Leftrightarrow\ t' = t$.
%

%
%%%%%%%%%%%%%%%%%%%%%%%%%%%%%
%____________________lemma_______________
%%%%%%%%%%%%%%%%%%%%%%%%%%
%
\begin{lemma}\label{firstlemmaofwarmup}
Let $\langle \delta \rangle(\chi_1,...,\chi_k) \in \mathrm{SUB}_{\psi}$
and $(u,t) \in Dom(\T)$. Then 
$(\M,u) \models \langle \delta \rangle (\chi_1,...,\chi_k)$
 iff 
$\T,\frac{(u,t)}{x_1} \models
\exists x_2...\exists x_k\bigl(\, \mathit{Diag}_{\delta}(x_1,...,x_k)\,
\wedge\, P_{\chi_1}(x_1) \wedge...\wedge P_{\chi_k}(x_k)\, \bigr)$.
\end{lemma}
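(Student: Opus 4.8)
The plan is to prove the biconditional in Lemma \ref{firstlemmaofwarmup} by unwinding both sides into statements about the structure $\T$ and the hypertorus, exploiting the way the interpretations $P_\alpha^\T$ and $P_t^\T$ were defined. The right-hand side asserts the existence of a tuple $(u,t) = (u_1,t_1), (u_2,t_2), \ldots, (u_k,t_k)$ in $\mathit{Dom}(\T)^k$ with $\T$-witnesses for $\mathit{Diag}_\delta$ and for each $P_{\chi_i}(x_i)$. Since $P_{\chi_i}^\T$ projects onto $\Vert\chi_i\Vert^\M$ (by definition, $(v,s)\in P_{\chi_i}^\T \Leftrightarrow v \in \Vert\chi_i\Vert^\M$), the conjuncts $P_{\chi_i}(x_i)$ hold precisely when $(\M,u_i)\models\chi_i$ for each $i$, independently of the torus coordinates. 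So the whole task reduces to understanding what $\mathit{Diag}_\delta(x_1,\ldots,x_k)$ contributes, and in particular how it forces the first components $(u_1,\ldots,u_k)$ to realize the diagram $\delta$ in $\M$.

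First I would analyze $\mathit{Diag}_\delta$. It is a disjunction over tuples $(t_1,\ldots,t_k)\in T_\delta$ of $P_{t_1}(x_1)\wedge\cdots\wedge P_{t_k}(x_k)\wedge\mathit{Cons}_\delta(x_1,\ldots,x_k)$. By the definition of $P_t^\T$ and $\psi_{\mathit{uniq}}$, each element $(v,s)$ of $\T$ satisfies $P_t$ for exactly one torus point $t=s$; hence the $P_{t_i}(x_i)$ conjuncts simply pin the torus coordinates of the witnesses to lie in the relation $T_\delta = b_k(\delta)\upharpoonright k$. By Lemma \ref{toruslemma}(1), once $x_1$ is assigned the element $(u,t)$, there is exactly one such good tuple in $T_\delta$, and its torus coordinates are pairwise distinct; this is what guarantees the witnesses for the different variables live in genuinely distinct copies of $M$ and can be chosen freely and independently. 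The role of $\mathit{Cons}_\delta$, via $\mathit{PreCons}$ and $\psi_{\mathit{sub}}$, is to propagate consistency of the monadic $P_\alpha$-labels with the diagram so that the existence of an appropriate first-component tuple in $\M$ is exactly captured; here I would invoke $\leq_f$ and the set $\Delta(\delta)$ of inverse projections to handle the case where some of the $x_i$ should be identified, ensuring $\mathit{Cons}_\delta$ records all the coarser-arity constraints.

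For the forward direction, assuming $(\M,u)\models\langle\delta\rangle(\chi_1,\ldots,\chi_k)$, I get a tuple $(u_1,\ldots,u_k)\in\Vert\delta\Vert^\M$ with $u_1=u$ and $(\M,u_i)\models\chi_i$. I would then select, using Lemma \ref{toruslemma}(1), the unique good tuple $(t_1,\ldots,t_k)\in T_\delta$ originating from $t$ (with $t_1=t$), and assign $x_i\mapsto(u_i,t_i)$. The $P_{t_i}$ and $P_{\chi_i}$ conjuncts then hold by construction and by the definition of $P^\T$; the $\mathit{Cons}_\delta$ conjunct holds because $\psi_{\mathit{sub}}$ (in particular the clauses $\psi_{\langle\delta\rangle(\ldots)}$ and the propagation through $\mathit{PreCons}$) is satisfied in $\T$, which in turn follows from $(\M,w)\models\psi$ and the definition of $P_\alpha^\T$. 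The converse direction runs the same correspondence backwards: from a satisfying assignment in $\T$, the torus coordinates are forced by Lemma \ref{toruslemma}(1) to be the unique good tuple, the distinctness clause of that lemma lets me read off a genuine $k$-tuple of $\M$-elements, and $\mathit{Cons}_\delta$ together with $\psi_{\mathit{total}}$/$\psi_{\mathit{sub}}$ certifies that this $\M$-tuple actually lies in $\Vert\delta\Vert^\M$, yielding the required witness for $\langle\delta\rangle(\chi_1,\ldots,\chi_k)$ at $(\M,u)$.

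The main obstacle I anticipate is the converse direction, specifically showing that satisfaction of $\mathit{Cons}_\delta$ in $\T$ genuinely forces the first-component tuple into $\Vert\delta\Vert^\M$ rather than merely being consistent with the monadic labeling. This is where the bookkeeping around $\psi_{\mathit{total}}$, the inverse projections $\Delta(\delta)$, and the surjections $f$ with $\delta\leq_f\eta$ must be used carefully: the subtlety is that $\mathit{Cons}_\delta$ only speaks about unary predicates $P_\alpha$, so recovering a relational fact about $\M$ requires exactly the encoding discipline that $\psi_{\mathit{local}}$ and $\psi_{\mathit{sub}}$ impose, and I would need to verify that no spurious tuple can satisfy $\mathit{Diag}_\delta$ without corresponding to an actual $\delta$-realizing tuple in $\M$. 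The separation of distinct diagram types across different torus dimensions, guaranteed by Lemma \ref{toruslemma}(2)--(3), is what prevents interference between incompatible diagrams and makes this recovery sound.
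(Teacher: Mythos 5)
Your forward direction is right in outline, but your justification of the $\mathit{Cons}_\delta$ conjunct is circular: you appeal to $\T\models\psi_{\mathit{sub}}$, yet the nontrivial part of $\T\models\psi_{\mathit{sub}}$, namely $\T\models\psi_{\langle\delta\rangle(\chi_1,\ldots,\chi_k)}$, is exactly what Lemma \ref{secondlemmaofwarmup} later derives \emph{from} Lemma \ref{firstlemmaofwarmup}. The paper instead verifies $\mathit{Cons}_\delta$ directly from the construction of $\T$, with no appeal to $(\M,w)\models\psi$ at all: for each inverse projection $(\eta,f)\in\Delta(\delta)$ and each $\langle\eta\rangle(\gamma_1,\ldots,\gamma_p)\in\mathrm{SUB}_\psi$, if the labels $P_{\gamma_i}$ hold at $(u_{f(i)},t_{f(i)})$, then $u_{f(i)}\in\Vert\gamma_i\Vert^{\M}$ by the definition of $P_{\gamma_i}^{\T}$; since $(u_1,\ldots,u_k)\in\Vert\delta\Vert^{\M}$ and $\delta\leq_f\eta$, we get $(u_{f(1)},\ldots,u_{f(p)})\in\Vert\eta\Vert^{\M}$, hence $u_{f(1)}\in\Vert\langle\eta\rangle(\gamma_1,\ldots,\gamma_p)\Vert^{\M}$ and so $(u_{f(1)},t_{f(1)})\in P^{\T}_{\langle\eta\rangle(\gamma_1,\ldots,\gamma_p)}$ by definition. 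That is the whole argument; $\psi_{\mathit{sub}}$ never enters.

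The converse is where your plan genuinely fails. You propose to show that the first components $(v_1,\ldots,v_k)$ of the $\T$-witnesses lie in $\Vert\delta\Vert^{\M}$, certified by ``$\mathit{Cons}_\delta$ together with $\psi_{\mathit{total}}$/$\psi_{\mathit{sub}}$''. This intermediate claim is false in general and cannot be certified by monadic data: pick $v_2,\ldots,v_k$ carrying the right $P_{\chi_i}$-labels in the right torus copies while $v_1$ satisfies $\langle\delta\rangle(\chi_1,\ldots,\chi_k)$ in $\M$ only via some \emph{other} witness tuple; then $\mathit{Diag}_\delta$ holds on $\bigl((v_1,s_1),\ldots,(v_k,s_k)\bigr)$ (each conjunct of $\mathit{Cons}_\delta$ is an implication between labels whose consequent is true), yet $(v_1,\ldots,v_k)\notin\Vert\delta\Vert^{\M}$. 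Spurious tuples do satisfy $\mathit{Diag}_\delta$, and the lemma is true anyway, because the paper's converse never recovers a relational tuple: since $(\delta,\mathrm{id})\in\Delta(\delta)$, satisfaction of $\mathit{Diag}_\delta$ yields $\mathit{PreCons}_\delta(x_1,\ldots,x_k)$; applying its implication for the subformula $\langle\delta\rangle(\chi_1,\ldots,\chi_k)\in\mathrm{SUB}_\psi$ to the labels $P_{\chi_i}$ gives $(v_1,s_1)\in P^{\T}_{\langle\delta\rangle(\chi_1,\ldots,\chi_k)}$, and the definition $P^{\T}_{\alpha}=\Vert\alpha\Vert^{\M}\times T$ immediately gives $(\M,v_1)\models\langle\delta\rangle(\chi_1,\ldots,\chi_k)$; the actual $\delta$-realizing tuple comes from the semantics of the modal operator in $\M$, not from the $\T$-witnesses. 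Accordingly, Lemma \ref{toruslemma}, $\psi_{\mathit{uniq}}$ and $\psi_{\mathit{total}}$ play no role in this direction of this lemma; the genuine tuple-recovery problem you correctly anticipate arises, and is solved, only in Section \ref{seconddirection}, where the relations of $\B$ are \emph{defined} from an arbitrary model $\A$ of $\psi^*(x)$ precisely so that $\mathit{Diag}$-satisfying tuples realize their diagrams (Lemmas \ref{uniquelemma} and \ref{lemmasomething}).
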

\begin{proof}
Define $u_1 := u$ and $t_1 := t$.
Assume $(\M,u_1) \models \langle \delta \rangle (\chi_1,...,\chi_k)$.
Thus $(u_1,...,u_k) \in\ \Vert \delta \Vert^{\M}$ for some
tuple $(u_1,...,u_k)$ such that 
$u_i\, \in\ \Vert \chi_i \Vert^{\M}$
for each $i$. Hence $(u_i,s)\in P_{\chi_i}^{\T}$ for each $i$
and each torus point $s$.
%
%If all the points $u_1,...,u_k$ are equal,
%
%then it is immediate that $\T,\frac{(u_1,t_1)}{x_1}\models \delta(\chi_1,...,\chi_k)(x_1)$.
%
To conclude the first direction of the proof, 
%
%
%
\begin{comment}
$
%
\T,\frac{(u_1,t_1)}{x_1}\models \exists x_2...\exists x_k
%
\bigl(\, \mathit{Diag}_{\delta}(x_1,...,x_k)
%
\wedge P_{\chi_1}(x_1)\, \wedge...\wedge\, P_{\chi_k}(x_k)\bigr),
%
$
\end{comment}
%
%\end{multline*}
%
%\end{comment} 
%
%
%
it suffices to prove that
$\T,\frac{\bigl((u_1,t_1),...,(u_k,t_k)\bigr)}{(x_1,...,x_k)}\models$ $
\mathit{Diag}_{\delta}(x_1,...,x_k)$
for some torus points $t_2,...,t_k$. 
Let $t_2,...,t_k$ be the torus points
such that $(t_1,...,t_k)\in T_{\delta}$.
In order to establish that 
$\T,\frac{\bigl((u_1,t_1),...,(u_k,t_k)\bigr)}{(x_1,...,x_k)}\models
\mathit{Cons}_{\delta}(x_1,...,x_k),$
%
%
%
%it now suffices to establish that
%
%
%
assume that $\delta\, \leq_{f}\, \eta$,
where $\eta\in\Delta_p$ and $p\geq k$.
Assume that $\langle \eta
\rangle(\gamma_1,...,\gamma_p) \in \mathrm{SUB}_{\psi}$,
and that
$\T,\frac{\bigl((u_1,t_1),...,(u_k,t_k)\bigr)}{(x_1,...,x_k)}\models
P_{\gamma_1}(x_{f(1)})\, \wedge...\wedge\, P_{\gamma_p}(x_{f(p)}).$
We must show that
$(u_{f(1)},t_{f(1)})
\ \in\ P_{\langle\eta \rangle(\gamma_1,...,\gamma_p)}^{\T}$.\smallskip
%

%
%Therefore $u_{\sigma^{-1}(1)} \in\ \Vert \langle\delta^{\sigma}\rangle (\gamma_1,...,\gamma_k) \Vert^{\M}$.
%
For each $i\in\{1,...,p\}$, as $(u_{f(i)},t_{f(i)})
\ \in\ P_{\gamma_{i}}^{\T}$, we have $u_{f(i)}\in\Vert\gamma_i\Vert^{\M}$
by the definition of $P_{\gamma_{i}}^{\T}$.
As $(u_1,...,u_k) \in\ \Vert \delta \Vert^{\M}$
and $\delta\, \leq_{f}\, \eta$,
we have $(u_{f(1)},...,u_{f(p)})\in \Vert\eta\Vert^{\M}$.
Therefore we have $u_{f(1)}\in\Vert\langle \eta\rangle(\gamma_1,...,\gamma_p)\Vert^{\M}$.
%
%
%
%$$\ P_{\langle \delta^{\sigma} \rangle(\gamma_1,...,\gamma_k)}^{\T}
%
%\ \ =\ \ 
%
%\Vert \langle\delta^{\sigma}\rangle (\gamma_1,...,\gamma_k)
%
%\Vert^{\M}\ \times\ \ \mathit{Dom}(\T),$$
%
%
%
Thus
$(u_{f(1)},t_{f(1)})
\in P_{\langle \eta \rangle(\gamma_1,...,\gamma_p)}^{\T}$
by the definition of $P_{\langle \eta \rangle(\gamma_1,...,\gamma_p)}^{\T}$.
We then deal with the converse implication of the lemma.
Define $s_1 := t$ and $ v_1 := u$.
Assume 
%
%
%
%\begin{multline*}
%
$
\T,\frac{(v_1,s_1)}{x_1}\ \models\ \exists x_2...\exists x_k 
\bigl(\, \mathit{Diag}_{\delta}(x_1,...,x_k)
\wedge\, P_{\chi_1}(x_1)\, \wedge...\wedge\, P_{\chi_k}(x_k)\, \bigr).
$
%
%\end{multline*}
%
%
%
Hence
%
%
%
%\begin{align*}
%
$\T,\frac{\bigl((v_1,s_1),...,(v_k,s_k)\bigr)}{(x_1,...,x_k)}
\models\mathit{Diag}_{\delta}(x_1,...,x_k)$
%
%\end{align*}
%
%
%
for some tuple $\bigl((v_1,s_1),...,(v_k,s_k)\bigr)$
%
%\in\ \mathit{Dom}(\T)$
%
such that $(v_i,s_i) \in P_{\chi_i}^{\T}$ for each $i$.
As now
%
%
%
%\begin{align*}
%
$$\T,\frac{\bigl((v_1,s_1),...,(v_k,s_k)\bigr)}{(x_1,...,x_k)}
\models\mathit{PreCons}_{\delta}(x_1,...,x_k),$$
%
%\end{align*}
%
%
%
we  infer that $(v_1,s_1) \in P_{\langle \delta \rangle(\chi_1,...,\chi_k)}^{\T}$.
By the definition of $P_{\langle \delta \rangle(\chi_i,...,\chi_k)}^{\T}$,
%
%As by definition
%
%
%
%$$P_{\langle \delta \rangle(\chi_i,...,\chi_k)}^{\T}\ \
%
%=\ \ \Vert \langle \delta\rangle(\chi_1,...,\chi_k)
%
%\Vert^{\M}\times\ \mathit{Dom}(\T),$$
%
%
%
we have $(\M,v_1)\models 
\langle\delta\rangle (\chi_1,...,\chi_k)$.
%
%as desired.
%
\end{proof}
\begin{lemma}\label{secondlemmaofwarmup}
Let $t$ be any torus point.
Under the assumption $(\M,w)\models
\psi$, we have $\T,\frac{(w,t)}{x}
\models \psi^{*}(x)$.
\end{lemma}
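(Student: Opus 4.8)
The plan is to verify, one at a time, the five conjuncts of $\psi^*(x)=\psi_{\mathit{total}}\wedge\psi_{\mathit{uniq}}\wedge\psi_{\mathit{local}}\wedge\psi_{\mathit{sub}}\wedge P_{\psi}(x)$ at the point $(w,t)$, using throughout only the two defining clauses of $\T$, namely $(u,t')\in P_\alpha^\T \Leftrightarrow u\in\Vert\alpha\Vert^\M$ and $(u,t')\in P_s^\T\Leftrightarrow t'=s$. Two conjuncts are immediate. For $P_{\psi}(x)$ we have $(w,t)\in P_\psi^\T$ exactly because $w\in\Vert\psi\Vert^\M$, which is the hypothesis $(\M,w)\models\psi$. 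For $\psi_{\mathit{uniq}}$, any element $(u,t')$ lies in $P_s^\T$ iff $t'=s$, so it cannot lie in $P_s^\T$ and $P_{s'}^\T$ simultaneously when $s\neq s'$; hence every conjunct $\neg\exists x\,(P_s(x)\wedge P_{s'}(x))$ holds in $\T$.

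Next I would treat $\psi_{\mathit{sub}}=\bigwedge_{\alpha\in\mathrm{SUB}_\psi}\psi_\alpha$ by a case analysis on the shape of $\alpha$. For $\alpha=\neg\beta$, $\alpha=(\beta\wedge\gamma)$, and $\alpha=\langle E\rangle\chi$ the biconditional $\psi_\alpha$ follows directly from the compositional behaviour of $\Vert\cdot\Vert^\M$ and the definition of the predicates $P_\alpha^\T$: for instance $(u,t')\in P_{\neg\beta}^\T$ iff $u\notin\Vert\beta\Vert^\M$ iff $(u,t')\notin P_\beta^\T$, while the $\langle E\rangle$ case uses that $\Vert\langle E\rangle\chi\Vert^\M$ is all of $M$ or empty exactly as $\Vert\chi\Vert^\M$ is nonempty or empty, matching $\exists y\,P_\chi(y)$ in $\T$. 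The remaining case $\alpha=\langle\delta\rangle(\chi_1,\dots,\chi_k)$ is precisely the assertion of Lemma \ref{firstlemmaofwarmup} applied at each $(u,t')\in\mathit{Dom}(\T)$, so no new work is needed there.

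The substantive content lies in $\psi_{\mathit{local}}$ and $\psi_{\mathit{total}}$. For $\psi_{\mathit{local}}$, fix $\delta\in\Delta$ and a point $(u,t')$ satisfying $\mathit{Local}_\delta$; then $u\in\Vert R\Vert^\M$ for each $R\in+(\delta)$ and $u\in\Vert\neg R\Vert^\M$ for each $R\in-(\delta)$, which by the semantics of atoms means the diagonal tuple $(u,\dots,u)$ satisfies $\bigwedge\delta$, i.e. $(u,\dots,u)\in\Vert\delta\Vert^\M$. (If some symbol occurs both positively and negatively in $\delta$, then $\mathit{Local}_\delta$ is unsatisfiable at $u$ and $\mathit{LocalDiag}_\delta(u)$ holds vacuously.) Consequently, whenever the antecedent of a conjunct of $\mathit{PreCons}_\delta(u,\dots,u)$ holds, i.e. $u\in\Vert\chi_i\Vert^\M$ for all $i$, the diagonal tuple witnesses $(\M,u)\models\langle\delta\rangle(\chi_1,\dots,\chi_k)$, giving $(u,t')\in P_{\langle\delta\rangle(\chi_1,\dots,\chi_k)}^\T$. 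For $\psi_{\mathit{total}}$ I would take an arbitrary tuple $\bigl((v_1,s_1),\dots,(v_k,s_k)\bigr)$ of $\T$ and let $\delta\in\Delta_k$ be the unique diagram realised by the underlying tuple $(v_1,\dots,v_k)$ in $\M$ (it exists because the set of $Y$-atoms and negated $Y$-atoms satisfied by $(v_1,\dots,v_k)$ is a maximal satisfiable set). Unwinding $\mathit{Cons}_\delta=\bigwedge_{(\eta,f)\in\Delta(\delta)}\mathit{PreCons}_\eta(x_{f(1)},\dots,x_{f(p)})$, for each inverse projection $(\eta,f)$ the relation $\delta\leq_f\eta$ together with $(v_1,\dots,v_k)\in\Vert\delta\Vert^\M$ yields $(v_{f(1)},\dots,v_{f(p)})\in\Vert\eta\Vert^\M$, and this projected tuple is exactly the witness making $(\M,v_{f(1)})\models\langle\eta\rangle(\gamma_1,\dots,\gamma_p)$ whenever the hypotheses $v_{f(j)}\in\Vert\gamma_j\Vert^\M$ of the corresponding conjunct hold.

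I expect the main obstacle to be $\psi_{\mathit{total}}$, specifically the bookkeeping that turns the purely syntactic relation $\delta\leq_f\eta$ into the semantic fact $(v_{f(1)},\dots,v_{f(p)})\in\Vert\eta\Vert^\M$ and then into diamond satisfaction; keeping the roles of the surjection $f$ and the substituted diagram $\eta/f$ aligned is the delicate point. A secondary care point is confirming that every tuple, including those with repeated coordinates, realises a genuine diagram in $\Delta_k$, so that the disjunction $\bigvee_{\delta\in\Delta_k}\mathit{Cons}_\delta$ is indeed witnessed.
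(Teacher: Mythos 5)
Your proposal is correct and follows essentially the same route as the paper's proof: conjunct-by-conjunct verification, with $\psi_{\mathit{uniq}}$ and $P_{\psi}(x)$ immediate from the definition of $\T$, the diagonal-tuple argument for $\psi_{\mathit{local}}$, the uniquely realised diagram of $(v_1,\dots,v_k)$ combined with unwinding $\mathit{Cons}_\delta$ via $\delta\leq_f\eta$ for $\psi_{\mathit{total}}$, and Lemma~\ref{firstlemmaofwarmup} for the $\langle\delta\rangle(\chi_1,\dots,\chi_k)$ case of $\psi_{\mathit{sub}}$. Your two explicit care points (vacuity of $\mathit{LocalDiag}_\delta$ when a symbol occurs both positively and negatively, and that every tuple, even with repeated coordinates, realises a genuine diagram since the set of atoms and negated atoms it satisfies is maximal satisfiable) are sound and slightly more explicit than the paper's treatment.
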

\begin{proof}
See the appendix.
\end{proof}
%

%
%_________________________second_direction__________________________
%

%
\subsection{Satisfiability of $\psi^*(x)$ implies satisfiability of $\psi$}\label{seconddirection}
Let $\A$ be a $V^*$-model with the domain $A$.
Assume that $\A,\frac{w}{x} \models \psi^{*}(x)$.
We next define a $V_{\psi}$-model $\B$ with
the same domain $A$, and then show that $(\B,w)\models\psi$.
Let $U$ be a non-empty set, and let $p\in\mathbb{Z}_+$.
Let $(u_1,...,u_p)\in U^p$ be a tuple.
We say that the tuple $(u_1,...,u_p)$ \emph{spans
the set $\{u_1,...,u_p\}$}.
Let $k\in\mathbb{Z}_+$, and let $S\in V_{\psi}$ be a $k$-ary symbol.
We define $(u,...,u)_k\in S^{\B}$ iff
$u\in P_{{}_S}^{{}\, \A}$.
%
%On tuples $(u_1,...,u_k)\in A^k$
%
%
%
This settles the interpretation of the symbols $S\in V_{\psi}$
on tuples that span sets of size one. Interpretation
of the symbols on tuples that span larger sets
is more complicated. We begin with the following lemma.
\begin{lemma}\label{uniquelemma}
Let $u_1,...,u_k\in A$.
Assume $\A,\frac{(u_1,...,u_k)}{(x_1,...,x_k)}\models\mathit{Diag}_{\delta}(x_1,...,x_k)$.
Then $\A,\frac{(u_{\sigma(1)},...,u_{\sigma(k)})}{(x_1,...,x_k)}\not
\models\mathit{Diag}_{\eta}(x_1,...,x_k)$
holds for all all $k$-permutations $\sigma$
and all $\eta\in\Delta_k\setminus\{\delta\}$.
Also $\A,\frac{(u_{\mu(1)},...,u_{\mu(k)})}{(x_1,...,x_k)}\not
\models\mathit{Diag}_{\delta}(x_1,...,x_k)$ holds
for all $k$-permutations $\mu$ other than the identity permutation.
\end{lemma}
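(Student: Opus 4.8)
The plan is to reduce both assertions to the combinatorial properties of the hypertorus recorded in Lemma~\ref{toruslemma}, exploiting the fact that, since $\A$ satisfies the conjunct $\psi_{\mathit{uniq}}$ of $\psi^*(x)$, every element of $A$ carries at most one torus label. First I would unpack the hypothesis: because $\A,\frac{(u_1,...,u_k)}{(x_1,...,x_k)}\models\mathit{Diag}_{\delta}(x_1,...,x_k)$, some disjunct of $\mathit{Diag}_{\delta}$ holds, so there is a tuple $(t_1,...,t_k)\in T_{\delta}$ with $u_i\in P_{t_i}^{\A}$ for every $i$; the conjunct $\mathit{Cons}_{\delta}$ is irrelevant here and can be discarded. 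Since $T_{\delta}=\bigl(b_k(\delta)\bigr)\upharpoonright k = R_j(\mathcal{N},\mathcal{M})\upharpoonright k$ for the index $j$ with $b_k(\delta)=R_j(\mathcal{N},\mathcal{M})$, the tuple $(t_1,...,t_k)$ is the $k$-truncation of a good $j$-sequence, and hence its entries are pairwise distinct by Lemma~\ref{toruslemma}(1). Using $\psi_{\mathit{uniq}}$, no element lies in two distinct predicates $P_t,P_s$; thus $t_i$ is the \emph{unique} torus label of $u_i$, a fact I will invoke repeatedly.

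Both claims then follow by contradiction, in each case reading off the torus-label conjuncts of the assumed $\mathit{Diag}$-formula and identifying them, via $\psi_{\mathit{uniq}}$, with the labels already fixed. For the first claim, suppose $\A,\frac{(u_{\sigma(1)},...,u_{\sigma(k)})}{(x_1,...,x_k)}\models\mathit{Diag}_{\eta}$ for some $k$-permutation $\sigma$ and some $\eta\in\Delta_k\setminus\{\delta\}$. Extracting a satisfied disjunct gives a tuple $(s_1,...,s_k)\in T_{\eta}=R_{j'}(\mathcal{N},\mathcal{M})\upharpoonright k$ with $u_{\sigma(i)}\in P_{s_i}^{\A}$; uniqueness of labels forces $s_i=t_{\sigma(i)}$, so $(t_{\sigma(1)},...,t_{\sigma(k)})\in R_{j'}(\mathcal{N},\mathcal{M})\upharpoonright k$. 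As $b_k$ is injective and $\eta\neq\delta$, we have $j'\neq j$, whence Lemma~\ref{toruslemma}(2), applied to $(t_1,...,t_k)\in R_j(\mathcal{N},\mathcal{M})\upharpoonright k$, forbids this membership --- a contradiction.

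For the second claim, suppose $\A,\frac{(u_{\mu(1)},...,u_{\mu(k)})}{(x_1,...,x_k)}\models\mathit{Diag}_{\delta}$ for some non-identity $k$-permutation $\mu$. The identical extraction yields $(t_{\mu(1)},...,t_{\mu(k)})\in R_j(\mathcal{N},\mathcal{M})\upharpoonright k$, which contradicts Lemma~\ref{toruslemma}(3) directly. The only point requiring care --- rather than a genuine obstacle --- is the permutation bookkeeping: one must track that permuting the argument tuple by $\sigma$ replaces the label of the $i$-th coordinate by $t_{\sigma(i)}$, so that the good-sequence condition coming out of $\mathit{Diag}_{\eta}$ is exactly the statement $(t_{\sigma(1)},...,t_{\sigma(k)})\in R_{j'}(\mathcal{N},\mathcal{M})\upharpoonright k$ to which parts (2) and (3) of Lemma~\ref{toruslemma} apply. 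Everything else is a direct appeal to the injectivity of $b_k$ and to $\psi_{\mathit{uniq}}$.
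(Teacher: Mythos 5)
Your proof is correct and follows exactly the route the paper intends: the paper's own proof is just the one-line remark ``Straightforward by Lemma~\ref{toruslemma}'', and your write-up is the intended expansion, correctly supplying the three ingredients that make it work --- $\psi_{\mathit{uniq}}$ (available since $\A,\frac{w}{x}\models\psi^*(x)$) to force each $u_i$ to carry the unique torus label $t_i$, injectivity of $b_k$ to get $j'\neq j$ from $\eta\neq\delta$, and parts (2) and (3) of Lemma~\ref{toruslemma} applied to the relabelled tuples $(t_{\sigma(1)},...,t_{\sigma(k)})$ and $(t_{\mu(1)},...,t_{\mu(k)})$. Nothing further is needed.
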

\begin{proof}
Straightforward by Lemma \ref{toruslemma}.
%
%(A full proof is given in Appendix \ref{v3appendix}.)
%
\end{proof}

%
\begin{comment}
%
Let $p$ and $p'$ be positive integers, and let $K$ be a set.
%
Let $(s_1,...,s_p)\in K^p$ and $(v_1,...,v_{p'})\in K^{p'}$
%
be tuples. We write $(s_1,...,s_p) \equiv (v_1,...,v_{p'})$ if
%
$\{s_1,...,s_p\} = \{v_1,...,v_{p'}\}$.
%
Define $\mathit{set}\bigl((s_1,...,s_p)\bigr)\, :=\, \{s_1,...,s_p\}$.
%
\end{comment}
%

%
Let $q\in\{2,...,\mathcal{M}\}$.
Consider subsets of $A$ that have exactly $q\geq 2$ elements.
Let us divide such sets into two classes.
Let $U = \{u_1,...,u_q\}$ be a set with $q$
distinct elements. 
Assume first that there exists some
$q$-permutation $\sigma$ and some $\eta\in\Delta_q$
such that
$\A,\frac{(u_{\sigma(1)},...,u_{\sigma(q)})}{(x_1,...,x_q)}
\models\mathit{Diag}_{\eta}(x_1,...,x_q).$
Define $\mathit{tuple}( U ) := (u_{\sigma(1)},...,u_{\sigma(q)})$
and $\mathit{diagram}( U ) := \eta$.
Define also $\mathit{type}(U) = 1$.
Assume then that
$\A,\frac{(u_{\sigma(1)},...,u_{\sigma(q)})}{(x_1,...,x_q)}
\not\models\mathit{Diag}_{\eta}(x_1,...,x_q)$
holds for all $\eta\in\Delta_q$ and all $q$-permutations $\sigma$.
As $\A\models\psi_{\mathit{total}}$,
there exists some diagram $\delta\in\Delta_q$
such that $\A,\frac{(u_1,...,u_q)}{(x_1,...x_q)}
\models \mathit{Cons}_{\delta}(x_1,...,x_q)$.
Define $\mathit{tuple}( U ) = (u_1,...,u_q)$
and $\mathit{diagram}( U ) := \delta$.
Define also $\mathit{type}(U) = 2$.
%

%
%Thus for each $p \in\mathcal{A}$ and 
%
%each $(v_1,...,v_p) = \mathcal{U}$, the tuple
%
%$D(u_1,...,u_k)$ satisfies has the property
%
%that $\A,\frac{(u_1,...,u_p)}{(x_1,..,x_p)}
%
%\models\mathit{Cons}_{D(u_1,...,u_k)}(x_1,...,x_p)$.
%

%
Notice that by our assumptions
in Section \ref{translationsection},
there are no relation symbols 
$S\in V_{\psi}\setminus D_{\psi}$
of any arity higher than one.
%
%On tuples that span a set of some size in $\{2,...,\mathcal{M}\}$,
%
%the definition of the relations $S^{\B}$ is
%
%somewhat more complicated.
%
Recall that $\mathcal{M}$ is the maximum arity of diagrams in $\Delta$.
We next define the relations $S^{\B}$, where $S\in D_{\psi}$, on tuples
of elements of $A$ that span sets with $q\in\{2,...,\mathcal{M}\}$ elements.
The definition has the property---as Lemma \ref{lemmawhatever} below 
establishes---that if $(u_1,...,u_k)\, \in\ \Vert \delta \Vert^{\B}$,
where $\delta\in\Delta_k$, then 
$\A,\frac{(u_1,...,u_k)}{(x_1,...,x_k)}\models \mathit{PreCons}_{\delta}(x_1,...,x_k)$.
In fact this holds also for tuples that span a singleton set,
see Lemma \ref{lemmawhatever}.
Let $q\in\{2,...,\mathcal{M}\}$,
and let $U \subseteq A$
be a set of the size $q$.
Assume first that $\mathit{type}(U) = 1$.
Let $\mathit{diagram}(U) = \eta\in\Delta_q$ and 
$\mathit{tuple}(U) = (u_1,...,u_q)$.
We have 
$\A,\frac{(u_1,...,u_q)}{(x_1,...,x_q)}\models
\mathit{Diag}_{\eta}(x_1,...,x_q).$
Let $k\geq q$ be an integer.
Interpret each $k$-ary symbol $S\in D_{\psi}$ such that
$\B,\frac{(u_1,...,u_q)}{(x_1,...,x_q)}\models \eta.$
This definition uniquely specifies the interpretation
of $S$ on each $k$-ary tuple that spans the set $\{u_1,...,u_q\}$.
To see this, let $f:\{1,...,k\}\rightarrow\{1,...,q\}$ be a surjection.
Now we have $(u_{f(1)},...,u_{f(k)})\in S^{\B}$ iff
$S(x_{f(1)},...,x_{f(k)})\in \eta$.
Assume then that $\mathit{type}(U) = 2$.
Let $\mathit{diagram}(U) = \delta\in\Delta_q$
and $\mathit{tuple}(U) = (v_1,...,v_q)$.
We have
$\A,\frac{(v_1,...,v_q)}{(x_1,...,x_q)}\models
\mathit{Cons}_{\delta}(x_1,...,x_p).$
Let $k\geq q$ be an integer.
Interpret each $k$-ary symbol $S\in D_{\psi}$ such that
$\B,\frac{(v_1,...,v_q)}{(x_1,...,x_q)}\models \delta.$
We investigate each $q\in\{2,...,\mathcal{M}\}$,
and thereby obtain a complete definition of $\B$;
if there are symbols of some arity
$r>\mathcal{M}$ in $D_{\psi}$,
we arbitrarily define the interpretations of such symbols on tuples
that span sets with more than $\mathcal{M}$ elements.
%

%%%%%%%%%%%%%%%%%%%%%%%%%%%%%%%%%

%%%%%%%%%%%%%%%%%%%%

%
\begin{lemma}\label{lemmasomething}
If $(u_1,...,u_k)\, \in\ A^k$ and
$\A,\frac{(u_1,...,u_k)}{(x_1,...,x_k)}\models \mathit{Diag}_{\delta}(x_1,...,x_k)$
for some $\delta\in\Delta_k$,
then $(u_1,...,u_k)\, \in\ \Vert \delta \Vert^{\B}$. 
\end{lemma}
\begin{proof}
Assume $\A,\frac{(u_1,...,u_k)}{(x_1,...,x_k)}\models \mathit{Diag}_{\delta}(x_1,...,x_k)$.
Notice that $k\geq 2$, since diagrams have by definition an arity at least two.
As $\A\models\psi_{\mathit{uniq}}$,
the set $U = \{u_1,...,u_k\}$ has exactly $k$ elements.
We have $\mathit{type}(U) = 1$,
and by Lemma \ref{uniquelemma},
$\mathit{tuple}(U) = (u_1,...,u_k)$.
Thus $\B,\frac{(u_1,...,u_k)}{(x_1,...,x_k)}\models \delta$.
\end{proof}
%

%
%Recall that $\mathit{Ar}_{\psi}$ denotes the 
%
%set of arities of diagrams that occur in $\psi$.
%
%Recall that $\mathcal{M}$ is the
%
%maximum arity of the relation symbols 
%
%that occur in the diagrams of $\delta$.
%

%
\begin{lemma}\label{lemmawhatever}
Let $k\in\{1,...,\mathcal{M}\}$.
If $(u_1,...,u_k)\, \in\ \Vert \delta \Vert^{\B}$, 
where $\delta\in\Delta_k$, then 
$\A,\frac{(u_1,...,u_k)}{(x_1,...,x_k)}\models \mathit{PreCons}_{\delta}(x_1,...,x_k)$.
\end{lemma}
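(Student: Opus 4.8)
The plan is to reduce the statement to producing $\mathit{PreCons}_{\delta}(x_1,\ldots,x_k)$ as one of the conjuncts of a $\mathit{Cons}$-formula that $\A$ already satisfies, rather than trying to verify $\mathit{Diag}_{\delta}$ at $(u_1,\ldots,u_k)$ directly. Write $U=\{u_1,\ldots,u_k\}$ and $q=|U|$, and split on $q$. When $q=1$ (which also covers $k=1$) the tuple is constant, say $(u,\ldots,u)$, and $(u,\ldots,u)\in\Vert\delta\Vert^{\B}$ unwinds, through the definition of $S^{\B}$ on constant tuples, to $u\in P_{R}^{\A}$ for all $R\in+(\delta)$ and $u\notin P_{R}^{\A}$ for all $R\in-(\delta)$. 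Using the conjuncts $\psi_{\neg R}$ of $\psi_{\mathit{sub}}$ to rewrite the negative clauses as $u\in P_{\neg R}^{\A}$, this says exactly $\A\models\mathit{Local}_{\delta}(u)$; since $\A\models\psi_{\mathit{local}}$, the implication $\mathit{Local}_{\delta}(x)\rightarrow\mathit{PreCons}_{\delta}(x,\ldots,x)_k$ then delivers the claim.

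For $2\le q\le k$, let $\zeta:=\mathit{diagram}(U)\in\Delta_q$ and $\mathit{tuple}(U)$ be the canonical data fixed in the construction of $\B$ (well defined by Lemma \ref{uniquelemma}). In either case one reads off $\A,\frac{\mathit{tuple}(U)}{(x_1,\ldots,x_q)}\models\mathit{Cons}_{\zeta}(x_1,\ldots,x_q)$: for $\mathit{type}(U)=2$ this is the defining property, and for $\mathit{type}(U)=1$ it holds because $\mathit{Cons}_{\zeta}$ is a conjunct of $\mathit{Diag}_{\zeta}$, which $\A$ satisfies at $\mathit{tuple}(U)$. Let $f:\{1,\ldots,k\}\rightarrow\{1,\ldots,q\}$ be the surjection determined by $u_i=\mathit{tuple}(U)_{f(i)}$. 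The heart of the argument is to show $(\delta,f)\in\Delta(\zeta)$, i.e.\ $\zeta\le_f\delta$.

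To prove $\zeta\le_f\delta$ I would compute the diagram realised by $(u_1,\ldots,u_k)$ in $\B$ straight from the construction. For any atom $R(x_{i_1},\ldots,x_{i_n})$ occurring in $\delta$, the tuple $(u_{i_1},\ldots,u_{i_n})$ equals $(\mathit{tuple}(U)_{f(i_1)},\ldots,\mathit{tuple}(U)_{f(i_n)})$ and spans $U$, so the defining equivalence $(u_{i_1},\ldots,u_{i_n})\in R^{\B}$ iff $R(x_{f(i_1)},\ldots,x_{f(i_n)})\in\zeta$ shows that the corresponding literal of $\delta/f$ lies in $\zeta$; the same holds for negated atoms. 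Here uniformity is essential: each atom of $\delta$ uses all of $x_1,\ldots,x_k$, hence after the substitution $x_i\mapsto x_{f(i)}$ it uses all of $x_1,\ldots,x_q$ and is a genuine $\{x_1,\ldots,x_q\}$-atom that $\zeta$ decides. Thus $\delta/f\subseteq\zeta$, so $\bigwedge\zeta\models\bigwedge\delta/f$, which is $\zeta\le_f\delta$. Consequently $\mathit{PreCons}_{\delta}(x_{f(1)},\ldots,x_{f(k)})$ is a conjunct of $\mathit{Cons}_{\zeta}(x_1,\ldots,x_q)$, and evaluating it along $\mathit{tuple}(U)$, where $\mathit{tuple}(U)_{f(i)}=u_i$, yields $\A,\frac{(u_1,\ldots,u_k)}{(x_1,\ldots,x_k)}\models\mathit{PreCons}_{\delta}$, as desired.

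I expect the main obstacle to be precisely the step $\zeta\le_f\delta$, together with the decision to route through $\mathit{Cons}_{\zeta}$ rather than $\mathit{Diag}_{\delta}$. The given tuple is in general a non-identity reordering — and, when $q<k$, a repetition-collapse — of $\mathit{tuple}(U)$, so $\mathit{Diag}_{\delta}$ need not hold at $(u_1,\ldots,u_k)$ at all: by Lemma \ref{toruslemma} the torus coordinates of the $u_i$ form a good sequence only in the canonical order, so the $P_t$-part of $\mathit{Diag}_{\delta}$ typically fails. What saves the argument is that the inverse-projection set $\Delta(\zeta)$ is built to contain exactly these reorderings and collapses, so that $\mathit{Cons}_{\zeta}$ silently carries the correctly permuted copy $\mathit{PreCons}_{\delta}$; checking that the substituted atoms stay uniform, and are therefore genuinely decided by $\zeta$, is where the uniformity condition on diagrams does its work.
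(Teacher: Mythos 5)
Your proof is correct and follows essentially the same route as the paper's: the singleton case via $\psi_{\mathit{local}}$ (with the $\psi_{\neg R}$ rewriting made explicit), and for $q\geq 2$ the passage through $\mathit{tuple}(U)$, $\mathit{diagram}(U)$ and $\mathit{Cons}_{\zeta}$, where your surjection $f$ and key claim $\zeta\leq_f\delta$ are exactly the paper's $g$ and $\nu\leq_g\delta$ --- a step the paper merely asserts from the two tuples realizing their diagrams in $\B$ and spanning the same set, and which you verify in detail from the defining equivalence for $S^{\B}$ together with uniformity. The concluding extraction of $\mathit{PreCons}_{\delta}(x_{f(1)},\ldots,x_{f(k)})$ as a conjunct of $\mathit{Cons}_{\zeta}$ evaluated along $\mathit{tuple}(U)$ likewise coincides with the paper's argument.
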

\begin{proof}
The case where $(u_1,..,u_k)$ spans a singleton set
follows since $\A\models \psi_{\mathit{local}}$.
Let us consider the cases where $(u_1,..,u_k)$ spans a
set of the size two or larger.
% 
\begin{comment}
%
Let $R\in A_{\psi}\, \cap\, D_{\psi}$ be an $n$-ary symbol. Assume that $(\B,u)\models R$.
%
Therefore the tuple $(u,...,u)_n$ is in $R^{\B}$.
%
As $R\in D_{\psi}$, there is some $k\in\mathit{Ar}_{\psi}$
%
such that $k\leq n$, $\delta\in\Delta_k$,  and
%
$(u,...,u)\in\ \Vert \delta \Vert^{\B}$.
%
As $\A\models\psi_{\mathit{total}}$,
%
we have $\A,\frac{u}{x}\models \mathit{Cons}_{\delta}(x,...,x)_k$.
%
Thus $\A,\frac{u}{x}\models\mathit{Local}_R(x)$.
%
As $\A\models\psi_{\mathit{sub}}$,
%
we conclude that $\A,\frac{u}{x}\models P_{{}_R}(x)$.
%
\end{comment}
%
%Let $k\in\{2,...,\mathcal{M}\}$.
%
%Let $A^{(k)}$ be the set of tuples in $A^k$
%
%that span a set of the size two or larger.
%

%
%For each tuple $(u_1,...,u_k)\in A^{k}$,
%
%there exists exactly one $\delta\in\Delta_k$
%
%such that $(u_1,...,u_k)\, \in\ \Vert \delta \Vert^{\B}$.
%
%We will show that if 
%
%$(u_1,...,u_k)\, \in\ \Vert \delta \Vert^{\B}$, then
%
%$\A,\frac{(u_1,...,u_k)}{(x_1,...,x_k)}\models \mathit{Cons}_{\delta}(x_1,...,x_k)$.
%

%
Assume that $(u_1,...,u_k)\in \Vert\delta\Vert^{\B}$ is a tuple
such that $U = \{u_1,...,u_k\}$ contains exactly $q\geq 2$
elements.
%
\begin{comment}
%
Let $(u_1,...,u_k)\in \Vert\delta\Vert^{\B}$ be a tuple
%
such that $|\{u_1,...,u_k\}| = q \geq 2$.
%
\end{comment}
%
Let  $m:\{1,...,q\}\rightarrow\{1,...,k\}$
be an injection such that
the tuple $(u_{m(1)},...,u_{m(q)})$
spans the set $\{u_1,...,u_k\}$.
%
%and if $m(i) = m(j)$, then $u_{m(i)} = u_{m(j)}$.
%

%
Assume first that we have
$\A,\frac{(u_{m(\sigma (1))},...,u_{m(\sigma(q))})}{(x_1,...,x_q)}
\models \mathit{Diag}_{\eta}(x_1,...,x_q)$
for some $\eta\in \Delta_q$ and some $q$-permutation $\sigma$.
Thus $\mathit{type}(U) = 1$.
By Lemma \ref{uniquelemma}, we 
have $\mathit{tuple}(U) = (u_{m (\sigma(1))},...,u_{m (\sigma(q))})$
and $\mathit{diagram}(U) = \eta$.
Let $s:\{1,...,q\}\rightarrow\{1,...,k\}$ be the injection
such that $s(i) = m(\sigma(i))$ for each $i\in \{1,...,q\}$.
As $\mathit{tuple}(U) = (u_{s(1)},...,u_{s(q)})$,
we have 
$\B,\frac{(u_{s(1)},...,u_{s(q)})}{(x_1,...,x_q)}\models \eta.$
As $\A,\frac{(u_{s (1)},...,u_{s(q)})}{(x_1,...,x_q)}
\models \mathit{Diag}_{\eta}(x_1,...,x_q)$,
we have 
$$\A,\frac{(u_{s(1)},...,u_{s(q)})}{(x_1,...,x_q)}
\models \mathit{Cons}_{\eta}(x_1,...,x_q).$$
The rest or the argument for the case where $\mathit{type}(U) = 1$,
will be dealt with below. Let us next elaborate some details related to the case
where $\mathit{type}(U) = 2$.
So, assume $\mathit{type}(U) = 2$. 
Let $t:\{1,...,q\}\rightarrow\{1,...,k\}$
be an injection such that 
$\mathit{tuple}(U) = (u_{t(1)},...,u_{t(q)})$.
Let $\mathit{diagram}(U) = \rho \in \Delta_q$.
Thus $\A,\frac{(u_{t(1)},...,u_{t(q)})}{(x_1,...,x_q)}\models
\mathit{Cons}_{\rho}(x_1,...,x_q)$
and $\B,\frac{(u_{t(1)},...,u_{t(q)})}{(x_1,...,x_q)}\models \rho$.
We then complete the arguments for both cases $\mathit{type}(U) = 1$
and $\mathit{type}(U) = 2$.
Let $(h,\nu) \in \{(s,\eta),(t,\rho)\}$,
where $s$ and $t$ are the injections defined above,
and of course $\eta$ and $\rho$ are the related diagrams. 
Let $g:\{1,...,k\}\rightarrow\{1,...,q\}$ be the surjection such that
$g(i) = j$ iff $u_i = u_{h(j)}$.
Notice that $(u_{ h(1)},...,u_{ h(q)}) \in \Vert \nu \Vert^{\B}$
and $(u_1,...,u_k) \in \Vert \delta \Vert^{\B}$,
and these two tuples span the same set with $q$ elements.
Thus we have $\nu \leq_g \delta$.
%

%
%As $\A,\frac{(u_{h(1)},...,u_{h(q)})}{(x_1,...,x_q)}
%
%\models \mathit{Diag}_{\eta}(x_1,...,x_q)$,
%
We have
$\A,\frac{(u_{h(1)},...,u_{h(q)})}{(x_1,...,x_q)}\models
\mathit{Cons}_{\nu}(x_1,...,x_q).$
As $\nu \leq_g \delta$, we have
$$\A,\frac{(u_{h(1)},...,u_{h(q)})}{(x_1,...,x_q)}\models
\mathit{PreCons}_{\delta}(x_{g(1)},...,x_{g(k)}).$$
Recalling that $g(i) = j$ iff $u_i = u_{h(j)}$, we conclude that
$\A,\frac{(u_1,...,u_k)}{(x_{1},...,x_{k})}\models
\mathit{PreCons}_{\delta}(x_{1},...,x_{k}),$
as required.
\end{proof}
\begin{lemma}\label{thirdlemmaofwarmup}
Let $\alpha \in \mathrm{SUB}_{\psi}$ and $u\in A$.
We have $(\B,u)\models\alpha$ iff
$\A,\frac{u}{x} \models P_{\alpha}(x)$.
\end{lemma}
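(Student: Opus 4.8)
The plan is to prove this truth lemma by structural induction on $\alpha\in\mathrm{SUB}_{\psi}$, exploiting that $\A\models\psi^*(x)$ and hence $\A\models\psi_{\mathit{sub}}$, so that every defining conjunct $\psi_{\neg\beta}$, $\psi_{(\beta\wedge\gamma)}$, $\psi_{\langle E\rangle\chi}$ and $\psi_{\langle\delta\rangle(\chi_1,\dots,\chi_k)}$ holds in $\A$. For the base case $\alpha=S$ with $S$ a $k$-ary symbol, note that $(\B,u)\models S$ means $(u,\dots,u)_k\in S^{\B}$, which by the singleton-spanning clause in the definition of $\B$ holds iff $u\in P_{{}_S}^{\A}$; this settles atoms of every arity at once and uses no induction hypothesis. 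The Boolean cases $\alpha=\neg\beta$ and $\alpha=(\beta\wedge\gamma)$ and the universal-modality case $\alpha=\langle E\rangle\chi$ are then routine: each follows by applying the induction hypothesis to the proper subformulae $\beta,\gamma,\chi$ and reading off the corresponding biconditional conjunct of $\psi_{\mathit{sub}}$. Since $\psi$ is assumed to contain no occurrences of $\top,\bot$, there is nothing further to check among the atoms.

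The substantive case is $\alpha=\langle\delta\rangle(\chi_1,\dots,\chi_k)$, and the two implications are handled by two different auxiliary results. For the direction $\A,\frac{u}{x}\models P_{\alpha}(x)\Rightarrow(\B,u)\models\alpha$, I would invoke $\psi_{\langle\delta\rangle(\chi_1,\dots,\chi_k)}$ to obtain elements $u_2,\dots,u_k$ (writing $u_1:=u$) with $\A,\frac{(u_1,\dots,u_k)}{(x_1,\dots,x_k)}\models\mathit{Diag}_{\delta}(x_1,\dots,x_k)$ and $u_i\in P_{\chi_i}^{\A}$ for each $i$; Lemma~\ref{lemmasomething} then places $(u_1,\dots,u_k)$ in $\Vert\delta\Vert^{\B}$, the induction hypothesis on each $\chi_i$ gives $(\B,u_i)\models\chi_i$, and the semantics of $\langle\delta\rangle$ yields $(\B,u)\models\alpha$. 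For the converse direction I would start from a witnessing tuple $(u_1,\dots,u_k)\in\Vert\delta\Vert^{\B}$ with $u_1=u$ and $(\B,u_i)\models\chi_i$, apply the induction hypothesis to get $u_i\in P_{\chi_i}^{\A}$, and then apply Lemma~\ref{lemmawhatever} to conclude $\A,\frac{(u_1,\dots,u_k)}{(x_1,\dots,x_k)}\models\mathit{PreCons}_{\delta}(x_1,\dots,x_k)$. Since $\langle\delta\rangle(\chi_1,\dots,\chi_k)\in\mathrm{SUB}_{\psi}$ and $\bigwedge_i P_{\chi_i}(u_i)$ holds in $\A$, the matching conjunct of $\mathit{PreCons}_{\delta}$ fires and delivers $u_1\in P_{\langle\delta\rangle(\chi_1,\dots,\chi_k)}^{\A}$, as required.

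The point I expect to require the most care, and the reason the two directions are not symmetric, is that one \emph{cannot} run the converse direction through $\mathit{Diag}_{\delta}$. A tuple may enter $\Vert\delta\Vert^{\B}$ via a non-canonical ordering of its underlying $q$-element set, and for such an ordering the torus predicates $P_{t}$ do not line up into a $T_{\delta}$-sequence: by the uniqueness guaranteed in Lemma~\ref{toruslemma} (and recorded in Lemma~\ref{uniquelemma}), only the canonical ordering associated with $\mathit{diagram}(U)$ satisfies any $\mathit{Diag}$-formula, so $\A\not\models\mathit{Diag}_{\delta}(u_1,\dots,u_k)$ in general. The resolution is precisely that $\mathit{PreCons}_{\delta}$ is the weaker invariant that survives arbitrary orderings and still records exactly the implications about the modal predicates $P_{\langle\delta\rangle(\cdots)}$ that the backward direction needs; this is why Lemma~\ref{lemmawhatever} concludes $\mathit{PreCons}_{\delta}$ rather than $\mathit{Diag}_{\delta}$. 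Finally, the induction is well founded because in every inductive case ($\neg\beta$, $(\beta\wedge\gamma)$, $\langle E\rangle\chi$, and $\langle\delta\rangle(\chi_1,\dots,\chi_k)$) the hypothesis is applied only to the proper subformulae $\beta,\gamma,\chi,\chi_1,\dots,\chi_k\in\mathrm{SUB}_{\psi}$.
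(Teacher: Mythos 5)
Your proof is correct and takes essentially the same route as the paper's: a structural induction in which the atomic case is read off from the singleton-spanning clause of the definition of $\B$, the Boolean and $\langle E\rangle$ cases from $\psi_{\mathit{sub}}$, the direction from $\A,\frac{u}{x}\models P_{\langle\delta\rangle(\chi_1,\dots,\chi_k)}(x)$ to $\B$ via $\psi_{\langle\delta\rangle(\chi_1,\dots,\chi_k)}$ and Lemma~\ref{lemmasomething}, and the converse via Lemma~\ref{lemmawhatever} and the firing of the matching conjunct of $\mathit{PreCons}_{\delta}$. Your closing observation about why the backward direction must route through $\mathit{PreCons}_{\delta}$ rather than $\mathit{Diag}_{\delta}$ is accurate and consistent with the construction; the paper leaves this asymmetry implicit.
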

\begin{proof}
See the appendix.
\end{proof}

Due to Lemma \ref{thirdlemmaofwarmup}, we immediately observe that
since $\A, \frac{w}{x} \models P_{\psi}(x)$, we must have $(\B,w) \models \psi$.
Together with Lemma \ref{secondlemmaofwarmup}, 
this establishes the following theorem.
\begin{theorem}
The satisfiability and finite satisfiability problems of
the one dimensional fragment are decidable.
\end{theorem}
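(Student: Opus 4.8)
The plan is to chain together the three satisfiability-preserving translations developed above, reducing $\mathrm{UF}_1$-satisfiability to satisfiability in $\mathrm{MFO}$, and then to invoke the classical decidability of the monadic class. First I would reduce to $\mathrm{MUF}_1$: by the effective translation of $\mathrm{UF}_1$ into $\mathrm{DUF}_1$ (established in the appendix) and the effective, equivalence-preserving translation of $\mathrm{DUF}_1$ into $\mathrm{MUF}_1$ noted at the end of Section \ref{modal}, every $\mathrm{UF}_1$-formula converts effectively into an $\mathrm{MUF}_1$-formula $\psi$ that is satisfiable exactly when the original formula is. Since these translations do not enlarge the witnessing domain, they preserve finite satisfiability as well. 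Hence it suffices to decide satisfiability and finite satisfiability of $\mathrm{MUF}_1$.

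Second, I would show that the translation $\psi \mapsto \psi^*(x)$ of Section \ref{translationsection} preserves both satisfiability and finite satisfiability. One direction is Lemma \ref{secondlemmaofwarmup}: if $(\M,w)\models\psi$, then $\T,\frac{(w,t)}{x}\models\psi^*(x)$, where $\T$ has domain $M\times T(\mathcal{N},\mathcal{M})$; as $T(\mathcal{N},\mathcal{M})$ is finite, $\T$ is finite whenever $\M$ is. The converse is supplied by the model construction of Section \ref{seconddirection} together with Lemma \ref{thirdlemmaofwarmup}: from any $V^*$-model $\A$ with $\A,\frac{w}{x}\models\psi^*(x)$ one builds a $V_{\psi}$-model $\B$ on the same domain $A$ with $(\B,w)\models\psi$, so finiteness is again preserved in both directions. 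Thus $\psi$ is (finitely) satisfiable iff $\psi^*(x)$ is.

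Finally, $\psi^*(x)\in\mathrm{MFO}$, and the monadic class is decidable and enjoys the finite model property. Consequently $\psi^*(x)$ is satisfiable iff it is finitely satisfiable, and both properties are decidable; pulling this back along the equivalences above yields the decidability of both the satisfiability and the finite satisfiability problems of $\mathrm{UF}_1$.

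The main obstacle is the backward direction of the equisatisfiability, i.e., reconstructing a genuine relational model $\B$ from a monadic model $\A$ of $\psi^*(x)$ (Lemma \ref{thirdlemmaofwarmup} and the surrounding construction). The difficulty is that the predicates $P_{\alpha}$ only record which subformulae hold pointwise, so one must define each polyadic relation $S^{\B}$ coherently across all orderings of each spanned tuple while simultaneously \emph{maximizing} several mutually incompatible diagram types without collision---precisely the complementation obstruction flagged in Section \ref{outline}. This is exactly what the hypertorus buys: the uniqueness properties of Lemma \ref{toruslemma}, transported to $\A$ via Lemma \ref{uniquelemma}, guarantee that the formulae $\mathit{Diag}_{\delta}$ select a unique diagram and a unique ordering for each set of witnesses, while $\psi_{\mathit{total}}$, $\psi_{\mathit{uniq}}$ and $\psi_{\mathit{local}}$ make the reconstruction total, well-defined and locally consistent. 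With these in hand, Lemma \ref{thirdlemmaofwarmup} follows by a routine induction on subformula structure, and the theorem is immediate.
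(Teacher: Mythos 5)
Your proposal is correct and follows essentially the same route as the paper: the chain $\mathrm{UF}_1\rightarrow\mathrm{DUF}_1\rightarrow\mathrm{MUF}_1$, the translation $\psi\mapsto\psi^*(x)$ into $\mathrm{MFO}$ with equisatisfiability established by Lemma \ref{secondlemmaofwarmup} in one direction and the construction of $\B$ together with Lemma \ref{thirdlemmaofwarmup} in the other, and finally the decidability and finite model property of the monadic class. You also correctly identify the hypertorus-based uniqueness argument (Lemmas \ref{toruslemma} and \ref{uniquelemma}, with $\psi_{\mathit{total}}$, $\psi_{\mathit{uniq}}$, $\psi_{\mathit{local}}$) as the crux of the backward direction, exactly as in the paper.
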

\section{Undecidable extensions}\label{undecidability}
%

%
%%%%%%%%%%%%%%%%%%%%%%%%%%%%%%%
%___________________Tilings____________
%%%%%%%%%%%%%%%%%%%%%%%%%%%%%%
%
\newcommand{\LL}{\mathcal{L}}
\newcommand{\G}{\mathfrak{G}}
\newcommand{\NN}{\mathbb{N}}
\newcommand{\cT}{\mathbb{T}}
\newcommand{\TDUF}{\mathrm{UF}_3}
\newcommand{\ODNF}{\mathcal{NF}_1}

%
%\section{Undecidability for stronger fragments}
%

The \emph{general one-dimensional fragment} $\mathrm{GF}_1$ of first-order logic
is defined in the same way as $\mathrm{UF}_1$, except that the uniformity
condition is relaxed.  
The set of $\tau$-formulae of $\mathrm{GF}_1$ is
the smallest set $\mathcal{F}$ satisfying the following conditions.
\begin{enumerate}
%
%\item
%$\bot,\top\in \mathcal{F}$
%
\item
If $\varphi$ is a unary $\tau$-atom, then $\varphi\in\mathcal{F}$.
Also $\top,\bot\in\mathcal{F}$.
\item
If $\varphi\in \mathcal{F}$, then $\neg\varphi\in\mathcal{F}$.
If $\varphi_1,\varphi_2\in \mathcal{F}$,
then $(\varphi_1\wedge\varphi_2)\in \mathcal{F}$.
\item
Let $Y = \{y_1,...,y_k\}$ be a set of variable symbols.
Let $U$ be a finite set of
formulae $\psi\in\mathcal{F}$ with free variables in $Y$.
Let $F$ be a set of $\tau$-atoms with free variables in $Y$.
Let $\varphi$ be any Boolean combination of formulae in $F\cup U$.
Then $\exists y_2...\exists y_k\, \varphi\, \in \mathcal{F}$
and $\exists y_1...\exists y_k\, \varphi\, \in \mathcal{F}$.
\end{enumerate}

There are different natural ways of generalizing $\mathrm{UF}_1$ so that 
a two-dimensional logic is obtained.
%
%One way would be to apply the uniformity
%
%condition only to atoms of arities higher than two, and allow free Boolean combinations
%
%of formulae with two free variables.
%
Here we consider a formalism which we call the 
\emph{strongly uniform two-dimensional fragment} $\mathrm{SUF}_2$ of first-order logic.
The set of $\tau$-formulae of $\mathrm{SUF}_2$ is
the smallest set $\mathcal{F}$ satisfying the following conditions.
\begin{enumerate}
%
%\item
%$\bot,\top\in \mathcal{F}$
%
\item
If $\varphi$ is a unary or a binary $\tau$-atom, then $\varphi\in\mathcal{F}$.
Also $\top,\bot\in\mathcal{F}$.
\item
If $\varphi\in \mathcal{F}$, then $\neg\varphi\in\mathcal{F}$.
If $\varphi_1,\varphi_2\in \mathcal{F}$,
then $(\varphi_1\wedge\varphi_2)\in \mathcal{F}$.
\item
Let $y_1$ and $y_2$ be variable symbols.
Let $U$ be a finite set of
formulae $\psi\in\mathcal{F}$ whose free variables are in $\{y_1,y_2\}$.
Let $\varphi$ be any Boolean combination of formulae in $U$.
Then $\exists y_2\, \varphi\, \in \mathcal{F}$
and $\exists y_1\exists y_2\, \varphi\, \in \mathcal{F}$.
\item
Let $Y = \{y_1,...,y_k\}$, $k\ge 3$, be a set of variable symbols.
Let $U$ be a finite set of
formulae $\psi\in\mathcal{F}$ such that each $\psi$ has at most 
one free variable, and the variable is in $Y$.
Let $F$ be a $V$-uniform set, $V\subseteq Y$, of $\tau$-atoms.
Let $\varphi$ be any Boolean combination of formulae in $F\cup U$.
Then $\exists y_3...\exists y_k\, \varphi\, \in \mathcal{F}$,
$\exists y_2...\exists y_k\, \varphi\, \in \mathcal{F}$ 
and $\exists y_1...\exists y_k\, \varphi\, \in \mathcal{F}$.
\end{enumerate}
Both of these extensions of 
$\mathrm{UF}_1$ are $\Pi_1^0$-complete;
see the appendix for the proofs.
This shows that if we lift either of the two
principal syntactic restrictions of $\mathrm{UF}_1$,
we obtain an undecidable formalism.
\section{Expressivity}\label{expressivity}

\emph{Guarded negation first-order logic} $\mathrm{GNFO}$ is a
novel fragment of first-order logic introduced
in \cite{IEEEonedimensional:barany}.
$\mathrm{GNFO}$ subsumes the
guarded fragment $\mathrm{GFO}$.
It turns out that $\mathrm{UF}_1$ is incomparable
in expressivity with both $\mathrm{GNFO}$ and
the two-variable fragment with counting quantifiers $\mathrm{FOC}^2$.
This is proved in the appendix.
% 

%%%%%%%%%%%%%%%%%%%%%%%%55
%%%%%%%%%%%%%%%%%%%%%%%%%%%%%%%
%%%%%%%%%%%%%%%%%%%%%%%%%%%%
%%%%%%Oma teksti loppuu
%%%%%%%%%%%%%%%%%%
%%%%%%%%%%%%%%%%%%%%%%%%
%%%%%%%%%%%%%%%%%%%%%%5
%%%%%%%%%%%%%%%%%%%%%%%5
%%%%%%%%%%%%%%%%%%%%%

%
\section{Conclusion}

The principal contribution of this paper is the
discovery of the fragment $\mathrm{UF}_1$
via the introduction of the notions of 
\emph{uniformity} and \emph{one-dimensionality}.
%
%The logic $\mathrm{UF}_1$
%
%of first-order logic that extends equality-free
%
%two-variable logic in a canonical way.
%
%We have discovered that the notions of
%
%\emph{one-dimensionality} and \emph{uniformity}
%
%lead to the definition of the decidable fragment $\mathrm{UF}_1$ of first-order logic.
%
The notions offer a novel perspective on why
modal logics are robustly decidable.
Also, $\mathrm{UF}_1$   
extends equality-free $\mathrm{FO}^2$ in a natural way,
and thus provides a possible
novel direction in the currently
very active research on two-variable logics.
Additionally, we believe that our satisfiability preserving translation of $\mathrm{UF}_1$
into the monadic class is of
an independent mathematical interest.
The translation is clearly robust and can be altered
and extended to give
other decidability (and complexity) proofs.
In the future we intend to study
variants of $\mathrm{UF}_1$ with identity.
It was observed in \cite{IEEEonedimensional:barany} that
adding the formula
$\forall x \forall y\bigl(Rxy \leftrightarrow x \not= y\bigr)$
to $\mathrm{GNFO}$
leads to an undecidable formalism.
It is not immediately
clear whether the extension of $\mathrm{UF}_1$ with 
the free use of equality and inequality results in undecidability.
%
%
%
%Of course $\mathrm{UF}_1$ already contains
%
%some natural restricted uses of identity. For
%
%example $\exists x\exists y(Rxy\wedge x = y)$
%
%can be expressed simply by the formula $\exists x (Rxx)$.
%
%On the other hand, the formula $\forall x \forall y\bigl(Rxy \leftrightarrow x \not= y\bigr)$,
%
%for instance, is not
%
%expressible in $\mathrm{UF}_1$.
%
%This can be seen (for example) by our decidability
%
%proof, which entails that satisfiable $\mathrm{UF}_1$-formulae can always be
%
%satisfied in larger models.  
%
%Interestingly, the formula $\forall x \forall y\bigl(R(x,y) \leftrightarrow x \not= y\bigr)$ is
%
%also inexpressible in $\mathrm{GNFO}$, as was observed in the conclusion of
%
%\cite{IEEEonedimensional:barany}. Furthermore, it was observed
%
%that adding this kind of a use of equality to $\mathrm{GNFO}$ results in an
%
%undecidable logic. It remains to be seen what kinds of uses of identity can be
%
%added to $\mathrm{UF}_1$ without rendering the logic undecidable.
%
%Also, it will be interesting to see how far and to which direction
%
%our proof method can be pushed.
%
We are currently working on related decidability and
complexity questions.
%

%
% conference papers do not normally have an appendix
%

\bibliographystyle{plain}
\bibliography{arxivi}

\begin{thebibliography}{10}

\bibitem{IEEEonedimensional:andreka}
Hajnal Andr\'eka, Johan van Benthem, and Istvan N\'{e}meti.
\newblock Modal languages and bounded fragments of predicate logic.
\newblock {\em Journal of Philosophical Logic}, 27(3):217--274, 1998.

\bibitem{IEEEonedimensional:barany}
Vince B{\'a}r{\'a}ny, Balder ten Cate, and Luc Segoufin.
\newblock Guarded negation.
\newblock In {\em ICALP (2)}, pages 356--367, 2011.

\bibitem{IEEEonedimensional:kieronski}
Saguy Benaim, Michael Benedikt, Witold Charatonik, Emanuel Kiero\'{n}ski,
  Rastislav Lenhardt, Filip Mazowiecki, and James Worrell.
\newblock Complexity of two-variable logic on finite trees.
\newblock In {\em ICALP (2)}, pages 74--88, 2013.

\bibitem{IEEEonedimensional:venema}
Patrick Blackburn, Maarten de~Rijke, and Yde Venema.
\newblock {\em Modal Logic}.
\newblock CUP, 2001.

\bibitem{IEEEonedimensional:charatonic}
Witold Charatonik and Piotr Witkowski.
\newblock Two-variable logic with counting and trees.
\newblock In {\em LICS}, pages 73--82, 2013.

\bibitem{IEEEonedimensional:passy}
Georgi Gargov and Solomon Passy.
\newblock Modal environment for boolean speculations.
\newblock In {\em Mathematical logic and its applications}, pages 253--263,
  1987.

\bibitem{IEEEonedimensional:gradelrobustly}
Erich Gr{\"a}del.
\newblock Why are modal logics so robustly decidable?
\newblock In {\em Current Trends in Theoretical Computer Science}, pages
  393--408. 2001.

\bibitem{IEEEonedimensional:gradelkolaitis}
Erich Gr{\"a}del, Phokion~G. Kolaitis, and Moshe~Y. Vardi.
\newblock On the decision problem for two-variable first-order logic.
\newblock {\em Bulletin of Symbolic Logic}, 3(1):53--69, 1997.

\bibitem{IEEEonedimensional:gradel}
Erich Gr{\"a}del, Martin Otto, and Eric Rosen.
\newblock Two-variable logic with counting is decidable.
\newblock In {\em LICS}, pages 306--317, 1997.

\bibitem{IEEEonedimensional:henkin}
Leon Henkin.
\newblock Logical systems containing only a finite number of symbols.
\newblock {\em Presses De l'Universit\'{e} De Montr\'{e}al}, 1967.

\bibitem{IEEEonedimensional:kieronskitendera}
Emanuel Kiero\'{n}ski, Jakub Michaliszyn, Ian Pratt-Hartmann, and Lidia
  Tendera.
\newblock Two-variable first-order logic with equivalence closure.
\newblock In {\em LICS}, pages 431--440, 2012.

\bibitem{IEEEonedimensional:libkin}
Leonid Libkin.
\newblock {\em Elements of Finite Model Theory}.
\newblock Springer, 2004.

\bibitem{IEEEonedimensional:lutz}
Carsten Lutz and Ulrike Sattler.
\newblock The complexity of reasoning with boolean modal logics.
\newblock In {\em Advances in Modal Logic}, pages 329--348, 2000.

\bibitem{IEEEonedimensional:mortimer}
M.~Mortimer.
\newblock On languages with two variables.
\newblock {\em Mathematical Logic Quarterly}, 21(1):135--140, 1975.

\bibitem{IEEEonedimensional:pratth}
Ian Pratt-Hartmann.
\newblock Complexity of the two-variable fragment with counting quantifiers.
\newblock {\em Journal of Logic, Language and Information}, 14(3):369--395,
  2005.

\bibitem{IEEEonedimensional:tendera}
Wieslaw Szwast and Lidia Tendera.
\newblock $\mathrm{FO}^2$ with one transitive relation is decidable.
\newblock In {\em STACS}, pages 317--328, 2013.

\bibitem{IEEEonedimensional:vardi}
Moshe~Y. Vardi.
\newblock Why is modal logic so robustly decidable?
\newblock In {\em Descriptive Complexity and Finite Models}, pages 149--184,
  1996.

\end{thebibliography}

%% Appendix.
%% Remove the \Appendix command if an 
%% appendix is not required.
\appendix

\section{Translation\, $\mathrm{UF}_1\rightarrow\mathrm{DUF}_1$ }\label{uftoduf}
\begin{proposition}\label{uftoduflemma}
There is an effective translation that transforms
each formula in $\mathrm{UF}_1$ to 
an equivalent formula in $\mathrm{DUF}_1$.
\end{proposition}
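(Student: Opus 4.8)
The plan is to prove Proposition~\ref{uftoduflemma} by structural induction on the formula $\varphi \in \mathrm{UF}_1(\tau)$, producing an equivalent $\mathrm{DUF}_1(\tau)$-formula at each stage. The cases for unary atoms, $\bot$, $\top$, and the Boolean connectives $\neg$ and $\wedge$ are immediate, since $\mathrm{DUF}_1$ is closed under exactly these operations by its clauses (i) and (ii). The entire content of the argument lies in the quantifier clause (iii) of $\mathrm{UF}_1$, where we must convert an arbitrary Boolean combination $\varphi$ of formulae drawn from $U \cup F$ (with $U \subseteq \mathcal{F}$ having free variables in $Y = \{y_1,\dots,y_k\}$, and $F$ a $V$-uniform set of $\tau$-atoms for some $V \subseteq Y$) into the rigid diagram-plus-conjunction shape demanded by $\mathrm{DUF}_1$'s clause (iii). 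The sketch already given in the excerpt points the way: the strategy is to push the existential block $\exists y_2 \cdots \exists y_k$ inside a disjunction.

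Concretely, I would first apply the induction hypothesis to each member of $U$ so that all subformulae are already in $\mathrm{DUF}_1$. Then I would put the Boolean combination $\varphi$ into disjunctive normal form $\varphi_1 \vee \dots \vee \varphi_m$, treating the members of $U \cup F$ as the literals. Since $\exists$ distributes over $\vee$, the formula $\exists y_2\cdots\exists y_k\,\varphi$ is equivalent to $\bigvee_{i} \exists y_2\cdots\exists y_k\,\varphi_i$, and a disjunction is expressible in $\mathrm{DUF}_1$ via $\neg$ and $\wedge$; so it suffices to handle a single conjunctive clause $\varphi_i$. Each such $\varphi_i$ is a conjunction of (a) formulae from $U$ or their negations, all of which are in $\mathrm{DUF}_1$ and have free variables among $Y$, and (b) $\tau$-atoms from $F$ or their negations, all $V$-atoms. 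The key observation is that the conjunction of the atomic literals in $\varphi_i$ is a \emph{satisfiable partial} specification of a $V$-uniform diagram: the uniformity condition guarantees every such atom is a $V$-atom, so all of them involve exactly the variables in $V$. I would then split this partial specification into a disjunction over all standard uniform $|V|$-ary $\tau$-diagrams $\delta$ that \emph{extend} it, i.e.\ that agree with every atomic literal occurring in $\varphi_i$; each such $\delta$ contributes one disjunct $\exists\overline{y}\,(\bigwedge\delta \wedge \varphi')$, where $\varphi'$ collects the remaining (non-atomic) $\mathrm{DUF}_1$-conjuncts. This matches $\mathrm{DUF}_1$'s clause (iii) exactly.

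Two bookkeeping points require care. First, $\mathrm{DUF}_1$'s diagrams are indexed by the \emph{distinct} variables $y_1,\dots,y_q$, whereas the $\mathrm{UF}_1$ clause may quantify over a block $\exists y_2\cdots\exists y_k$ where some meta-variables coincide or where $V \subsetneq Y$; I would need to reconcile the arity of the chosen diagram $\delta$ (namely $|V|$) with the nominal variable block by relabelling and, where $V$ omits some quantified variable, appending a vacuous existential---which is harmless since clause (iv) of $\mathrm{DUF}_1$ permits $\exists y\,\varphi$ for $\varphi$ with at most one free variable, and more generally clause (iii) subsumes it. Second, if $F$ is empty (or $V=\emptyset$), no genuine diagram is available, but then $\varphi_i$ contains only $\mathrm{DUF}_1$-formulae of the appropriate free-variable profile and the quantifier can be absorbed directly through clause (iv) after collapsing to a single free variable. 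The main obstacle I anticipate is precisely this matching of indices and variable identifications---ensuring that the disjunction over extending diagrams is both \emph{exhaustive} (so no model is lost) and \emph{sound} (each $\delta$ genuinely implies the original atomic literals via the relation $\leq_f$), and handling the degenerate cases where $V$ is a proper subset of the quantified variables. Everything else is a routine verification that the constructed $\mathrm{DUF}_1$-formula is logically equivalent to $\exists y_2\cdots\exists y_k\,\varphi$, which follows because the diagrams in $\Delta_{|V|}$ partition the possible atomic behaviours of any tuple realizing $V$.
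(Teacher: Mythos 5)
Your overall strategy is the same as the paper's: put the matrix in disjunctive normal form, distribute the existential block over the disjunction, complete the uniform atomic literals of each clause to the disjunction of all uniform diagrams over the same variable set that extend them (with the empty disjunction $\bigvee\emptyset = \bot$ covering the unsatisfiable case, so your satisfiability assumption on the partial specification is unnecessary), and discharge leftover quantified variables through rule (iv) of $\mathrm{DUF}_1$. In outline this is exactly how Proposition \ref{uftoduflemma} is proved in the appendix.

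There is, however, one step that fails as literally described, and it is exactly the point you defer as ``the main obstacle'': you take the DNF treating whole members of $U$ as literals. A member of $U$ is an arbitrary $\mathrm{UF}_1$-formula with free variables in $Y$, hence possibly a Boolean combination with \emph{several} free variables, e.g.\ a conjunct $\chi(y_2,y_3)$ with $y_2\in V$ and $y_3\notin V$. When $V\subsetneq Y$, rule (iii) of $\mathrm{DUF}_1$ forces the quantifier block to consist of exactly the diagram variables minus the free one, so the variables of $Y\setminus V$ must be bound separately; your proposed fix of ``appending a vacuous existential'' is not available, since the matrix $\bigwedge\delta\wedge\varphi'$ has several free variables, so rule (iv) (which requires at most one free variable) cannot bind an extra variable over it, and a straddling conjunct $\chi(y_2,y_3)$ blocks pushing $\exists y_3$ inward onto its own conjunct. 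The repair---which the paper's proof silently builds in---is to take the DNF at a finer granularity: by the grammar of $\mathrm{UF}_1$, every formula is a Boolean combination of formulae with at most one free variable, so decompose the members of $U$ into such units before normalizing. Each clause then has the shape $\alpha\wedge\psi_1(z_1)\wedge\dots\wedge\psi_k(z_k)\wedge\beta$, with $\alpha$ the uniform higher-arity literals and each $\psi_i$ having at most the one free variable $z_i$; the quantifiers over variables not occurring in $\alpha$ then distribute onto their own conjuncts as $\exists z_i\,\psi_i(z_i)$ via rule (iv), while the block over the variables of $\alpha$, together with an extending diagram, instantiates rule (iii) (with one further rule-(iv) application when $y_1$ does not occur in $\alpha$). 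With that refinement your argument goes through; note also that the relation $\leq_f$ you invoke for soundness plays no role at this stage---it is only needed later, in the translation of $\mathrm{MUF}_1$ into monadic first-order logic.
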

\begin{proof}
Let $\chi := \exists y_2... \exists y_k\, \varphi$ be a
formula of $\mathrm{UF}_1$ formed using the formation rule (iii)
in the definition of $\mathrm{UF}_1$. We may assume, w.l.o.g.,
that that the
variables $y_1,...,y_k$ are distinct, and that $k\geq 2$.
Define $Y := \{y_1,...,y_k\}$.
%
%and $X := Y\setminus \{y_1\}$.
%
Let $\tau_{\chi}$ be the set of relation symbols in $\chi$ of the arity two and higher.
Put $\varphi$ into disjunctive normal form.
We obtain a formula $\exists y_2... \exists y_k\, \bigl(\varphi_1 \vee... \vee \varphi_n\bigr)$.
Now distribute 
the existential quantifier prefix $\exists y_2... \exists y_k$
over the disjunctions, obtaining the formula
$\exists y_2... \exists y_k\,\varphi_1 \vee... \vee \exists y_2... \exists y_k\, \varphi_n$.
Now consider the formula $\varphi_j$.
Assume first that $\varphi_j$ is of the type $\alpha \wedge \psi$,
where $\alpha$ is a non-empty conjunction
of atoms and negated atoms of the
arity $m\geq 2$,
and $\psi$ is a non-empty conjunction  
of formulae that have at most one free variable.
Let $z_2,...,z_p \in Y$
denote the variables in $Y\setminus\{y_1\}$
that occur in $\alpha$.
Notice that $p = m$ if and only if $y_1$
occurs in $\alpha$.
Let $z_1$ denote $y_1$.
Let $z_{{p+1}},..., z_{{k}}\in Y$ be the variables in $Y\setminus \{z_1,...,z_p\}$.
Notice that the formula $\psi$ is equivalent to the conjunction
$\psi_1(z_1)\wedge ... \wedge \psi_k(z_k) \wedge \beta$, where 
each formula $\psi_i(z_i)$ is the conjunction of exactly all conjuncts of $\psi$
with the free variable $z_i$, in the case such
conjuncts exist, and $\psi_i(z_i)$ is the formula $\top$
otherwise; the formula $\beta$
is the conjunction of the conjuncts of $\psi$ without free variables.
The formula $\varphi_j$ is equivalent to the formula
$
\exists z_2...\exists z_p\bigl(\, \alpha
\wedge \psi_1(z_1) \wedge ... \wedge \psi_p(z_p)\, \bigr)
\wedge \exists z_{p+1}\psi_{p+1}(z_{p+1})\, \wedge ... \wedge\,
\exists z_k\psi_k(z_k)\, \wedge\, \beta.
$
Notice that for each $i$, the formula $\exists z_i\psi_i(z_i)$ is a
$\mathrm{DUF}_1$-formula if $\psi_i(z_i)$ is.
Consider the formula
$\gamma := \exists z_2...\exists z_p\bigl(\, \alpha
\wedge \psi_1(z_1) \wedge ... \wedge \psi_p(z_p)\, \bigr).$
The formula $\alpha$ is either equivalent to $\bot$,
or equivalent to a non-empty disjunction $\delta_1\vee...\vee\delta_l$,
where each $\delta_i$ denotes a conjunction over
some uniform $m$-ary $\tau_{\chi}$-diagram.
Assume first that $\alpha$ is equivalent to $\delta_1\vee...\vee\delta_l$.
Therefore the formula $\gamma$
is equivalent to the disjunction
$\exists z_2... \exists z_p\bigl(\, \delta_1\wedge\psi_1(z_1) \wedge ... \wedge \psi_p(z_p)\, \bigr)
\vee...\vee\exists z_2... \exists z_p\bigl(\, \delta_l \wedge\psi_1(z_1) \wedge ... \wedge \psi_p(z_p)\, \bigr).$
Notice that the disjunct $\exists z_2... \exists z_p\bigl(\, \delta_i\wedge\psi_1(z_1) \wedge ... \wedge \psi_p(z_p)\, \bigr)$
is a $\mathrm{DUF}_1$-formula if the formulae $\psi_1(z_1),...,\psi_p(z_p)$ are;
we may need to use the formation rule (iv) of $\mathrm{DUF}_1$
in addition to rule (iii) if $\exists z_2... \exists z_p\bigl(\, \delta_i\wedge\psi_1(z_1) \wedge ... \wedge \psi_p(z_p)\, \bigr)$
does not contain the free variable $z_1$.
In the case $\alpha$ is equivalent to $\bot$,
then $\gamma$ is equivalent to $\bot$.
%
%but we wish to ensure that the translation from $\mathrm{UF}_1$
%
%to $\mathrm{DUF}_1$ can be done such that the translated
%
%formula has exactly the same free variables and non-logical symbols as the original one.
%
%Thus, the formula $\gamma$ corresponds to, say, 
%
%$\exists z_2...\exists z_p(\delta_i \wedge \top)\, \wedge\, \neg \exists z_2...z_p(\delta_i \wedge \top)$,
%
%where $\delta_i$ is one of the disjuncts of $\delta_1\vee ... \vee \delta_l$.  
%

%
We have now discussed the case where $\varphi_j$ is of the type
$\exists y_2...\exists y_k\bigl(\, \alpha \wedge \psi\, \bigr)$,
where $\alpha$ is a non-empty conjunction of
atoms and negated atoms of some arity higher than one, and
$\psi$ is a non-empty conjunction of formulae with at most
one free variable.
The case where $\varphi_j$ is 
$\exists y_2...\exists y_k\, \alpha$,
%
%where $\alpha$ is a non-empty conjunction of atoms of some
%
%arity higher than one,
%
can be reduced to the case already discussed by
considering the formula $\exists y_2...\exists y_k\bigl(\, \alpha \wedge \top\, \bigr)$.
Assume thus that $\varphi_j$ is the formula
$\exists y_2...\exists y_k\, \psi$,
where $\psi$ is some conjunction
$\psi_1(y_1)\wedge...\wedge\psi_k(y_k) \wedge \beta$,
where the formulae $\psi_i(y_i)$
have at most one free variable, and $\beta$ has no free variables. 
Now $\varphi_j$ is equivalent to the formula
$\psi_1(y_1)\wedge\exists y_2\psi_2(y_2)\wedge...\wedge\exists y_k\psi_k(y_k) \wedge \beta$.
Each conjunct $\exists y_i \psi_1(y_i)$ is a $\mathrm{DUF}_1$-formula
if $\psi_i(y_i)$ is. 
All other cases concerning the translation from $\mathrm{UF}_1$
to $\mathrm{DUF}_1$
are straightforward.
\end{proof}

\section{Proofs for Section \ref{decidabilitysection}}
%
%\textbf{Proofs for Section \ref{decidabilitysection}}
%
\textbf{Proof of Lemma \ref{secondlemmaofwarmup}.}
%
%\begin{proof}
%
We establish the claim of the lemma by showing that
$$\T,\frac{(w,t)}{x} \models\,
\psi_{\mathit{total}}\, \wedge\, \psi_{\mathit{uniq}}
\, \wedge\, \psi_{\mathit{local}}\, \wedge\,
\psi_{\mathit{sub}}\, \wedge\, P_{\psi}(x).$$
To show that $\T \models\,
\psi_{\mathit{total}}$, let $\bigl((u_1,t_1),...,(u_k,t_k)\bigr)\, \in\, (\mathit{Dom}(\T))^k$,
where $k\in \{2,...,\mathcal{M}\}$.
We need to show that $\bigl((u_1,t_1),...,(u_k,t_k)\bigr)$
satisfies the formula $\mathit{Cons}_{\delta}(x_1,...,x_k)$ for
some $\delta\in\Delta_k$.
Consider the tuple $(u_1,...,u_k) \in M^k$.
Let $\eta$ be the unique
standard uniform $k$-ary $V_{\psi}$-diagram $\eta$ such
that $(u_1,...,u_k) \in\ \Vert \eta \Vert^{\M}$.
Let $p\in\{2,...,\mathcal{M}\}$, $p\geq k$.
Let $\rho\in\Delta_p$.
Let $f:\{1,...,p\}\rightarrow\{1,...,k\}$
be a surjection, and assume that $\eta\, \leq_f\, \rho$.
Thus $(u_{f(1)},...,u_{f(p)})\in \Vert \rho \Vert^{\M}$.
In order to conclude that $\T\models\psi_{\mathit{total}}$,
we need to show that
$\T,\frac{\bigl((u_1,t_1),...,(u_k,t_k)\bigr)}{(x_1,...,x_k)}
\models
\mathit{PreCons}_{\rho}(x_{f(1)},...,x_{f(p)})$.
Therefore we assume that
$\T,\frac{\bigl((u_1,t_1),...,(u_k,t_k)\bigr)}{(x_1,...,x_k)}
\models
P_{\chi_1}(x_{f(1)})\wedge...\wedge P_{\chi_p}(x_{f(p)}).$
Thus we have 
$(\M,u_{f(i)})
\models
\chi_{i}$ for each $i\in\{1,...,p\}$.
As $(u_{f(1)},...,u_{f(p)})\in \Vert \rho \Vert^{\M}$,
we therefore have $u_{f(1)}\, \in\, \Vert\langle\rho\rangle(\chi_1,...,\chi_p)\Vert^{\M}$.
Thus $(u_{f(1)},t_{f(1)})
\in P_{\langle \rho \rangle(\chi_1,...,\chi_p)}^{\T}$,
whence 
$\T,\frac{\bigl((u_1,t_1),...,(u_k,t_k)\bigr)}{(x_1,...,x_k)}
\models
P_{\langle \rho \rangle(\chi_1,...,\chi_p)}(x_{f(1)}).$
Therefore $\T\models \psi_{\mathit{total}}$.
It is immediate by the definition of
the domain of $\T$ and the predicates $P_{t}^{\T}$,
where $t$ is a torus point, that $\T\models\psi_{\mathit{uniq}}$.
To show that $\T\models\psi_{\mathit{local}}$,
assume $\T,\frac{(u,t)}{x}\models\mathit{Local}_{\delta}(x)$
for some $k$-ary diagram $\delta\in\Delta$.
Thus $(u,...,u)_k\in\Vert\delta\Vert^{\M}$.
To show that $\T,\frac{(u,t)}{x}\models\mathit{PreCons}_{\delta}(x,...,x)_k$,
let $\langle\delta\rangle(\chi_1,...,\chi_k)\in\mathrm{SUB}_{\psi}$ and
assume that $\T,\frac{(u,t)}{x}\models P_{\chi_1}(x)\wedge...\wedge P_{\chi_k}(x)$.
Therefore $u\in\Vert\chi_i\Vert^{\M}$ for each $i\in\{1,...,k\}$,
whence $u\in\Vert\langle\delta\rangle(\chi_1,...,\chi_k)\Vert^{\M}$.
Thus $(u,t)\in P_{\langle\delta\rangle(\chi_1,...,\chi_k)}^{\T}$, as required.
%

%
\begin{comment}
%
The fact that $\T \models \psi_{\mathit{sub}}$
%
follows by the definition of the predicates $P_{\chi}^{\T}$,
%
\end{comment}
%
The non-trivial part in proving that  $\T \models \psi_{\mathit{sub}}$
involves showing that $\T \models \psi_{\langle \delta \rangle(\chi_1,...,\chi_k)}$
for formulae of the type $\langle\delta\rangle(\chi_1,...,\chi_k)$.
This follows directly by Lemma \ref{firstlemmaofwarmup}, since
$P_{\langle\delta\rangle(\chi_1,...,\chi_k)}^{\T}
\ =\ \ \Vert \langle \delta \rangle (\chi_1,...,\chi_k)
\Vert^{\M}\times\ \mathit{Dom}(\T).$
Since $(\M,w)\models \psi$ and $P_{\psi}^{\T}\, =\ \Vert\psi\Vert^{\M}
\times\ \mathit{Dom}(\T)$, we have
$\T,\frac{(w,t)}{x} \models P_{\psi}(x)$.
\qed\\
\noindent
\textbf{Proof of Lemma \ref{thirdlemmaofwarmup}.}
We establish the claim by induction on the structure of $\alpha$.
For all atomic formulae $S \in \mathrm{SUB}_{\psi}$, the claim follows
directly from the definition of the relations $S^{\B}$ on tuples
that span a singleton set.
The cases where $\alpha$ is
of form $\neg \beta$ or $(\beta \wedge \gamma)$
are straightforward since $\A\models \psi_{\mathit{sub}}$.
%

%
%Assume then that $\A,\frac{u}{x}\models P_{{}_R}(x)$.
%
%We immediately conclude that $(\B,u)\models R$ by
%
%the definition of $R^{\B}$ on tuples $(u,...,u)_n$.
%

%
Define $u_1 := u$ and $x_1 := x$.
Assume that $\B,\frac{u_1}{x_1}\models \langle\delta\rangle(\chi_1,...,\chi_k)$,
where $\langle\delta\rangle(\chi_1,...,\chi_k)\in\mathrm{SUB}_{\psi}$.
Thus $(u_1,...,u_k) \in\ \Vert \delta \Vert^{\B}$
for some tuple 
$(u_1,...,u_k)$ such that $u_i \in\ \Vert \chi_i \Vert^{\B}$ for each $i\in\{1,...,k\}$.
Now, for each $i\in\{1,...,k\}$,
we have $P_{\chi_i}^{\A} =\ \Vert \chi_i \Vert^{\B}$
by the induction hypothesis, and therefore
$u_i \in P_{\chi_i}^{\A}$.
By Lemma \ref{lemmawhatever}, we have
$\A,\frac{(u_1,...,u_k)}{(x_1,...,x_k)}
\models \mathit{PreCons}_{\delta}(x_1,...,x_k).$
By the definition of the formula $\mathit{PreCons}_{\delta}(x_1,...,x_k)$,
we conclude that
%
%$u_1\, \in P_{\langle \delta\rangle (\chi_1,...,\chi_k)}^{\A}$.
%
$\A,\frac{u_1}{x_1}\models P_{\langle \delta\rangle (\chi_1,...,\chi_k)}(x_1)$.
For the converse, assume $\A,\frac{u_1}{x_1}
\models P_{\langle\delta\rangle(\chi_1,...,\chi_k)}(x_1)$.
As $\A\models \psi_{\langle\delta\rangle(\chi_1,...,\chi_k)}$, we have
%
%
%
%\begin{align*}
%
$\A,\frac{u_1}{x_1}\ \models\ \exists x_2...\exists x_k
\bigl( \mathit{Diag}_{\delta}(x_1,...,x_k) \wedge P_{\chi_1}(x_1)
\wedge...\wedge P_{\chi_k}(x_k)\bigr).$
%
%\end{align*}
%
%
%
Hence there exists some
tuple $(u_1,...,u_k)$
such that $u_i \in P_{\chi_i}^{\A}$ for each $i$ and
$\A,\frac{(u_1,...,u_k)}{(x_1,...,x_k)}
\models \mathit{Diag}_{\delta}(x_1,...,x_k).$
By Lemma \ref{lemmasomething}, 
we have $(u_i,...,u_k)\in\ \Vert \delta \Vert^{\B}$. 
As $\Vert \chi_i \Vert^{\B} = P_{\chi_i}^{\A}$ for each $i$ by 
the induction hypothesis, we
conclude that $(\B,u_1)\models \langle \delta \rangle (\chi_1,...,\chi_k)$.
Assume that $(\B,u)\models \langle E\rangle \chi$,
where $\langle E \rangle \chi \in\mathrm{SUB}_{\psi}$.
Thus $(\B,v)\models \chi$
for some $v$, whence $\A,\frac{v}{y}\models P_{\chi}(y)$
by the induction hypothesis.  Thus $\A \models \exists y P_{\chi}(y)$.
As $\A\models\psi_{\mathit{sub}}$,
we have $\A,\frac{u}{x}\models P_{\langle E\rangle \chi}(x)$.
Assume that $\A,\frac{u}{x}\models P_{\langle E\rangle \chi}(x)$.
As $\A\models\psi_{\mathit{sub}}$,
we have $\A\models \exists y P_{\chi}(y)$,
whence $\A,\frac{v}{y}\models P_{\chi}(y)$
for some $v$. By the induction hypothesis,
we have $(\B,v)\models \chi$,
whence $(\B,u)\models \langle E\rangle \chi$.
\qed
\section{Arguments concerning undecidable extensions}\label{undec}
%
%for Section \ref{undecidability}}
%
%
%

%
%\subsection{Tiling problems}
%

Let us recall the tiling problem of the infinite grid $\NN\times\NN$.
A tile is a mapping $t: \{R,L,T,B\}\to C$, where $C$ is a countably infinite set of colours.
We use the subscript notation $t_X:=t(X)$ for $X\in\{R,L,T,B\}$.
Intuitively, $t_R$, $t_L$, $t_T$ and $t_B$ 
are the colours of the right edge, left edge, top edge and 
bottom edge of the tile $t$, respectively.  

Let $\cT$ be a finite set of tiles. A $\cT$-tiling of $\NN\times\NN$ is a function
$f:\NN\times\NN\to\cT$ that satisfies the following horizontal and vertical 
tiling conditions:
\begin{description}
\item[$(T_H)$] For all $i,j\in\NN$, if $f(i,j)=t$ and $f(i+1,j)=t'$, then $t_R=t'_L$.
\item[$(T_V)$] For all $i,j\in\NN$, if $f(i,j)=t$ and $f(i,j+1)=t'$, then $t_T=t'_B$.
\end{description}
Thus, $f$ is a proper tiling if and only if the colors on the matching edges 
of any two neighbouring tiles coincide.
The tiling problem for the grid $\NN\times\NN$ asks
whether for a finite set $\cT$ of tiles, there exist a $\cT$-tiling of
$\NN\times\NN$.
It is well known that this problem is undecidable ($\Pi^0_1$-complete).
Using the tiling problem, it is straightforward to prove the 
following proposition.

%
%\subsection{General one-dimensional fragment is  $\Pi^0_1$-complete}
%
%
%
\begin{proposition}\label{general}
The satisfiability problem of\, $\mathrm{GF}_1$ is\, $\Pi^0_1$-complete.
%
%and therefore undecidable. 
%
\end{proposition}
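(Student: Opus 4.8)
The plan is to prove $\Pi^0_1$-completeness in two halves. Membership in $\Pi^0_1$ is automatic: $\mathrm{GF}_1$ is a fragment of first-order logic, and by Gödel completeness the set of unsatisfiable first-order sentences is recursively enumerable, so satisfiability of $\mathrm{GF}_1$-sentences is co-r.e., i.e.\ in $\Pi^0_1$. For hardness I would exhibit a computable many-one reduction from the $\NN\times\NN$-tiling problem (which is $\Pi^0_1$-complete): given a finite tile set $\cT$, construct a $\mathrm{GF}_1$-sentence $\varphi_\cT$ that is satisfiable if and only if $\cT$ tiles $\NN\times\NN$.

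For the encoding I would use two binary symbols $H,V$ (right-step and up-step) and a unary predicate $P_t$ for each $t\in\cT$, and let $\varphi_\cT$ be the conjunction of the following conditions: each element satisfies exactly one $P_t$; seriality, $\forall x\exists y\,H(x,y)$ and $\forall x\exists y\,V(x,y)$; matching, i.e.\ no $H$-edge joins tiles $t,t'$ with $t_R\ne t'_L$ and no $V$-edge joins $t,t'$ with $t_T\ne t'_B$; and a confluence condition $\Theta$ asserting that whenever $H(x,u)\wedge V(u,p)\wedge V(x,w)\wedge H(w,q)$ holds, then $p$ and $q$ carry the same tile. Each conjunct is one-dimensional, since every existential block leaves at most the single variable $x$ free; but the atoms $H(x,u),V(u,p),V(x,w),H(w,q)$ relate \emph{distinct} variables through \emph{different} argument sets, and this is precisely the non-uniform combination that $\mathrm{UF}_1$ forbids and $\mathrm{GF}_1$ allows. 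In particular $\Theta$ is expressible only because of non-uniformity: it is the negation of the sentence $\exists x\exists u\exists w\exists p\exists q(\cdots)$. The absence of equality is no obstacle for any of these conjuncts. The easy direction is then immediate: if $\cT$ tiles the plane, the model with domain $\NN\times\NN$, with $H,V$ the coordinate successors and $P_t$ the set of $t$-cells, satisfies $\varphi_\cT$.

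The converse — extracting a plane tiling from an \emph{arbitrary} model $\N\models\varphi_\cT$ — is where the real work lies, and I expect it to be the main obstacle. A faithful grid normally rests on functionality of $H,V$ and on geometric confluence (the $H$-then-$V$ successor equals the $V$-then-$H$ successor), and neither is directly available: functionality would require equality, which $\mathrm{GF}_1$ lacks, while geometric confluence is genuinely two-dimensional — its natural formulation $\forall x\forall u\forall w\bigl(H(x,u)\wedge V(x,w)\to\exists v(V(u,v)\wedge H(w,v))\bigr)$ leaves the two variables $u,w$ free after the inner quantifier and so lies outside the fragment. My plan is to substitute the tile-level confluence $\Theta$ together with coordinate-parity predicates (column- and row-parities, flipped by $H$ and $V$ respectively, to rule out degenerate collapses), then to build witness points $\phi(i,j)$ inductively from seriality while using $\Theta$ and the matching conjuncts to propagate edge-matchings to positions whose edges were not constructed outright. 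The technical heart is to show that the assignment $f(i,j):=$ the tile of $\phi(i,j)$ is globally consistent; I anticipate that a naive row-by-row extraction stalls exactly because equal tiles need not be equal points while the next tile depends on the point, so the argument must either strengthen $\Theta$ with further one-dimensional determinism conditions — all of which must remain (vacuously) true on the genuine grid, so as not to break the easy direction — or proceed through a compactness argument over finite $n\times n$ sub-grids. Once a valid $\cT$-tiling is read off, this together with the easy direction yields ``$\varphi_\cT$ satisfiable iff $\cT$ tiles $\NN\times\NN$'', establishing $\Pi^0_1$-hardness and hence, with the upper bound, $\Pi^0_1$-completeness.
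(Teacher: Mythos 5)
Your overall architecture matches the paper's: membership in $\Pi^0_1$ via completeness of first-order logic, and hardness via a reduction from $\mathbb{N}\times\mathbb{N}$-tiling using seriality axioms $\forall x\exists y\,H(x,y)$ and $\forall x\exists y\,V(x,y)$, a partition axiom, edge-matching axioms, and some confluence axiom to replace the inexpressible functionality/equality machinery. You also correctly diagnose why the natural confluence $\forall x\forall u\forall w\bigl(H(x,u)\wedge V(x,w)\to\exists v(V(u,v)\wedge H(w,v))\bigr)$ falls outside the fragment. But your chosen axiom $\Theta$ is \emph{tile-level} --- it forces only that the two one-step completions of a square carry the same tile --- and this is too weak to carry out the extraction you defer to ``the technical heart.'' Concretely: to apply $\Theta$ at column $i+1$ you need a genuine $V$-edge from $\phi(i,j)$ into row $j+1$ together with a genuine $H$-edge in the appropriate row, but $\Theta$ returns only a tile equality, never an edge. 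So whichever way you build row $j+1$ --- as an $H$-chain starting from a $V$-successor of $\phi(0,j)$, or as pointwise $V$-successors of row $j$ --- the hypotheses of $\Theta$ fail from column $2$ onward and one of the two matching conditions stops propagating. Your two proposed escapes do not close this gap: the compactness route over finite squares inherits exactly the same obstruction (the problem is edge propagation, not infinity), and the ``further one-dimensional determinism conditions'' are precisely the missing idea, which you never supply.

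The paper supplies it: instead of tile-level confluence it uses the purely universal, \emph{edge-level} commutation $\eta_{\mathit{Com}}:=\forall x\forall y\forall z\forall w\,\bigl((H(x,y)\land V(x,z)\land H(z,w))\to V(y,w)\bigr)$, which lies in $\mathrm{GF}_1$ (a single quantifier block over a non-uniform Boolean combination of atoms, leaving no free variable) and holds in the standard grid, so the easy direction is unharmed. With it, defining $\phi(i+1,j+1)$ as any $H$-successor of $\phi(i,j+1)$ automatically yields the edge $V(\phi(i+1,j),\phi(i+1,j+1))$, so the invariant ``all grid edges are realized'' propagates and one obtains a homomorphism $h:\mathfrak{G}\to\mathfrak{M}$ from the grid into an arbitrary model of the axioms; the tiling is then simply read off by pulling back the unary predicates $P_t$ along $h$, using the partition and matching axioms. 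This also renders your coordinate-parity predicates unnecessary --- degenerate collapses are harmless under a mere homomorphism --- and makes $\Theta$ itself redundant. In short: your reduction scaffolding is the same as the paper's, but the one step you flag as unresolved is a genuine gap, and the fix is to strengthen $\Theta$ to the edge-propagating axiom $\eta_{\mathit{Com}}$.
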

\begin{proof}
Let $\tau=\{H,V\}$ be a vocabulary, where $H$ and $V$ are binary relation symbols.
The infinite grid $\NN\times\NN$ can be represented by a $\tau$-structure 
$\G:=(\NN\times\NN,H^\G,V^\G)$, where 
$H^\G:=\{((i,j),(i+1,j))\, |\,  i,j\in\NN\}$ and
$V^\G:=\{((i,j),(i,j+1))\, |\,  i,j\in\NN\}$.
Let $\Gamma$ be the conjunction of the three $\tau$-sentences
$\eta_H:= \forall x\exists y\,H(x,y)$,
$\eta_V:= \forall x\exists y\,V(x,y)$, and
$\eta_{\mathit{Com}}:= 
\forall x\forall y\forall z\forall w\,\bigl((H(x,y)\land V(x,z)\land H(z,w))\to V(y,w)\bigr)$.
It is easy to see that $\eta_H$, $\eta_V$ and $\eta_{\mathit{Com}}$ are  
in $\mathrm{GF}_1$. 
% 
% Strictly speaking, only equivalent with sentences in $\mathrm{GF}_1$.

%
%We prove first that $\G$ can be homomorphically
%
%embedded in any model of $\Gamma$. 
%

%\begin{lemma}\label{homom}
%
It is straightforward to show that if $\M$ is a $\tau$-model such that $\M\models\Gamma$,
then there exists a homomorphism $h: \G\to \M$.
%
%\end{lemma} 
%
%\begin{proof}
%
%Straightforward. 
%                                                      
%\end{proof}
%

%
Let $\cT$ be a set of tiles. We simulate tiles by
unary relation symbols $P_t$ for each $t\in\cT$. 
We denote the corresponding vocabulary $\tau\cup\{P_t\, | \,  t\in\cT\}$
by $\sigma_\cT$. The tiling conditions $(T_H)$ and $(T_V)$ can 
be expressed by the $\sigma_\cT$-sentences
$
	\psi_H:=\forall x\forall y\bigwedge_{t,t'\in\cT,\;  t_R\not=t'_L}
	(P_t (x)\land P_{t'}(y))\to \lnot H(x,y)$ and
$\psi_V:=\forall x\forall y\bigwedge_{t,t'\in\cT,\;  t_T\not=t'_B}
	(P_t (x)\land P_{t'}(y))\to \lnot V(x,y)$.
Let  $\Psi_\cT:=\psi_H\land\psi_V\land\psi_{\mathit{part}}$, 
where $\psi_{\mathit{part}}$ is a sentence saying that every element
is in exactly one of the relations $P_t$, $t\in\cT$.
Clearly $\psi_{\mathit{part}}$
can be expressed in $\mathrm{GF}_1$.
It is straightforward to show
that the sentence $\Gamma\land\Psi_\cT$ is satisfiable if and only if
$\NN\times\NN$ is $\cT$-tilable.
%
\begin{comment}
%
Assume first that $\M$ is a $\tau\cup\sigma_\cT$-model such that 
%
$\M\models\Gamma\land\Psi_\cT$. By Lemma \ref{homom}, there is a 
%
homomorphism $h:\G\to\M$.
%
Since $\M\models\psi_{\mathit{part}}$, for each 
%
$a\in M$ there is exactly one $t\in\cT$ such that $a\in P_t^\M$.
%
Thus, we can define a function $f:\NN\times\NN\to\cT$ by setting
%
$f(i,j)=t$, where $t$ is the unique tile such that $h(i,j)\in P_t^\M$.
% 
We will now show that $f$ is a tiling of $\NN\times\NN$. 
%

To  see this, assume that $f(i,j)=t$ and $f(i+1,j)=t'$ 
%
for some $i,j\in\NN$. Then 
%
$(h(i,j),h(i+1,j))\in H^\M$, $h(i,j)\in P_t^\M$ and $h(i+1,j)\in P_{t'}^\M$.
%
Since $\M\models\psi_H$, it follows that $t_R=t'_L$.
%
In the same way we see that if $f(i,j)=t$ and $f(i,j+1)=t'$,
%
then $t_T=t'_B$.
%

To prove the converse implication, assume that $f:\NN\times\NN\to\cT$ is a 
%
$\cT$-tiling of the grid $\G$. Define for each $t\in\cT$ the relation
%
$P_t^\G:=\{u\in\NN\times\NN\, |\, f(u)=t\}$.
%
It is now easy to see that 
%
$\M\models\Gamma\land\Psi_\cT$, where $\M$ is the model 
%
$(\NN\times\NN,H^\G,V^\G,(P_t^\G)_{t\in\cT})$.
%
\end{comment}
%
%
%It follows from Lemma \ref{3-tiling} that the satisfiability problem of
%
%the general one-dimensional fragment $\mathcal{GF}_1$ is undecidable.
%
%
%
Since the sentence $\Gamma\land\Psi_\cT$ is in $\mathrm{GF}_1$
for each finite set $\cT$ of tiles,
the tiling problem is effectively reducible to 
the satisfiability problem of $\mathrm{GF}_1$. Hence the satisfiability problem is
$\Pi^0_1$-hard. On the other hand, $\mathrm{GF}_1$ is a fragment of
first-order logic, whence its satisfiability problem is in~$\Pi^0_1$.
\end{proof}
%
%
%
%

%
%\begin{theorem}\cite{IEEEonedimensional:berger, IEEEonedimensional:harel}
%
%The tiling problem for the grid $\NN\times\NN$ is 
%
%$\Pi^0_1$-complete.
%
%\end{theorem}
%

%
%___________________2-dim-tiling____________
%
%\subsection{Uniform $2$-dimensional fragment is undecidable}
%

%
Let $\tau_+=\{H_+,V_+,S\}$ be a vocabulary, where $H_+$ and $V_+$ 
are ternary relation symbols and $S$ is a binary relation symbol.
We will represent the infinite grid $\NN\times\NN$ as
a $\tau_+$-structure $\G_+:=(\NN,H_+^{\G_+},V_+^{\G_+},S^{\G_+})$, where
$H_+^{\G_+}:=\{(i,i+1,j)\, |\,  i,j\in\NN\}$,
$V_+^{\G_+}:=\{(i,j,j+1)\, |\,  i,j\in\NN\}$, and
$S_+^{\G_+}:=\{(i,i+1)\, |\,  i\in\NN\}$.
Notice that $(u,v,w)\in V_+^{\G_+}$
iff $(u,v)$ connects to $(u,w)$ via the
vertical successor $V^{\G}$
of the standard Cartesian grid $\G$
defined in the proof of Proposition \ref{general}.
On the other hand, $(u,v,w)\in H_+^{\G_+}$
iff $\bigl((u,w),(v,w)\bigr)\in H^{\G}$.
We shall next form a $\tau_+$-sentence $\Gamma_+$ of
$\mathrm{SUF}_2$ such that $\G_+\models \Gamma_+$,
and there is a homomorphism from $\G_+$ to any model of $\Gamma_+$.
%
%For this purpose, we define the following three $\mathrm{SUF}_2$-sentences:
%
Define $\Gamma_+$ to be the conjunction of the formulae
$\theta_S:=\;\forall x\exists y\, S(x,y),$
$\theta_H:=\;\forall x_1\forall x_2\,(S(x_1,x_2)\to\forall y\,H_+(x_1,x_2,y)),$ and
$\theta_V:=\;\forall y_1\forall y_2\,(S(y_1,y_2)\to\forall x\,V_+(x,y_1,y_2)).$
%
%Let $\Gamma_+$ be the conjunction of these three sentences.
%
%It is now easy to establish the following lemma.
%
%
%
%
\begin{lemma}\label{appendixlemma}
If $\M$ is a $\tau_+$-model such that $\M\models\Gamma_+$,
then there exists a homomorphism $h: \G_+\to \M$. 
\end{lemma}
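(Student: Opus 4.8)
The plan is to construct the homomorphism $h$ directly by following a single $S$-successor chain through $\M$. Let $M$ denote the domain of $\M$. First I would fix an arbitrary element $a_0 \in M$ and set $h(0) := a_0$. Then, given $h(i)$, I would invoke $\M \models \theta_S$ to obtain some $b \in M$ with $(h(i), b) \in S^{\M}$, and set $h(i+1) := b$. This recursion defines a function $h : \NN \to M$, and by construction $(h(i), h(i+1)) \in S^{\M}$ for every $i \in \NN$.

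It then remains to verify that $h$ preserves each of the three relations of $\G_+$. Preservation of $S$ is immediate from the construction, since every pair in $S^{\G_+}$ has the form $(i, i+1)$ and $(h(i), h(i+1)) \in S^{\M}$ by the choice of $h(i+1)$. For $H_+$, consider a triple $(i, i+1, j) \in H_+^{\G_+}$. As $(h(i), h(i+1)) \in S^{\M}$, the universal sentence $\theta_H$, instantiated with $x_1 := h(i)$ and $x_2 := h(i+1)$, forces $\M \models \forall y\, H_+(h(i), h(i+1), y)$, so in particular $(h(i), h(i+1), h(j)) \in H_+^{\M}$. The case of $V_+$ is symmetric: given $(i, j, j+1) \in V_+^{\G_+}$, we have $(h(j), h(j+1)) \in S^{\M}$, and instantiating $\theta_V$ with $y_1 := h(j)$ and $y_2 := h(j+1)$ gives $\M \models \forall x\, V_+(x, h(j), h(j+1))$, whence $(h(i), h(j), h(j+1)) \in V_+^{\M}$. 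This establishes that $h$ is a homomorphism.

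There is no genuine obstacle here; the real work was already done in the careful design of the encoding $\G_+$. The point to observe is that each ternary relation of $\G_+$ constrains only one pair of \emph{successive} coordinates (the first two for $H_+$, the last two for $V_+$) while leaving the remaining coordinate entirely free, and this free coordinate is matched precisely by the inner universal quantifier of $\theta_H$ (the $\forall y$) and of $\theta_V$ (the $\forall x$). Consequently, once the relevant $S$-edge between two successive images is present, an entire family of $H_+$-edges (resp.\ $V_+$-edges) is forced in $\M$. In particular $h$ need not be injective, and tracking the single $S$-chain produced by $\theta_S$ already suffices to witness every edge of $\G_+$.
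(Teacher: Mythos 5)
Your proof is correct and follows essentially the same route as the paper: a recursive construction of $h$ along an $S$-chain via $\theta_S$, followed by using the universal sentences $\theta_H$ and $\theta_V$ (instantiated at the $S$-edge $(h(i),h(i+1))$, resp.\ $(h(j),h(j+1))$) to force all required $H_+$- and $V_+$-triples. Your verification is merely more explicit than the paper's, which states the same facts without spelling out the instantiations.
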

\begin{proof}
We define a function $h:\NN\to M$ by recursion as follows.
Choose an arbitrary point $a_0\in M$, and set 
$h(0):=a_0$.
Assume that  $h(i)=a$ has been defined.
Since $\M\models\theta_S$, there is
$b\in M$ such that $(a,b)\in S^\M$. Define $h(i+1):=b$.
Observe first that $(h(i),h(i+1))\in S^\M$
for each $i\in\NN$. Furthermore, since $\M\models\theta_H\land\theta_V$,
we have
$
	(h(i),h(i+1),h(j))\in H_+^\M \;\text{ and }\; (h(i),h(j),h(j+1))\in V_+^\M
$
for all $i,j\in\NN$. Thus $h$ is a homomorphism $\G_+\to\A$.
\end{proof}
%
%
%
%\begin{proof}
%
%See the appendix.
%
%\end{proof}
%
%We can now prove the following theorem
%
%
%
\begin{theorem}
The satisfiability problem of $\mathrm{SUF}_2$ is
$\Pi_1^0$-complete.
\end{theorem}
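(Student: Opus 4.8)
The statement has two halves. Membership in $\Pi^0_1$ is immediate, exactly as for $\mathrm{GF}_1$ in Proposition \ref{general}: $\mathrm{SUF}_2$ is a fragment of first-order logic, so unsatisfiability is recursively enumerable and hence satisfiability lies in $\Pi^0_1$. The real work is the lower bound, which I would obtain by reducing the ($\Pi^0_1$-complete) tiling problem for $\NN\times\NN$ to $\mathrm{SUF}_2$-satisfiability, reusing the apparatus already set up: the representation $\G_+$, the grid axiom $\Gamma_+$, and the homomorphism Lemma \ref{appendixlemma}. The reason for representing the grid over the domain of \emph{coordinates} (so that a grid point is a pair $(a,c)$ and the grid is the automatically confluent Cartesian square of the domain) is precisely that the confluence sentence $\eta_{\mathit{Com}}$ used for $\mathrm{GF}_1$ is \emph{non-uniform} and hence unavailable in $\mathrm{SUF}_2$; here confluence comes for free, and Lemma \ref{appendixlemma} already embeds the full grid into every model of $\Gamma_+$.

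The crux is encoding the tiling conditions uniformly. The naive choice of binary tile predicates $P_t(x,y)$ fails: the horizontal condition couples the atoms $H_+(x_1,x_2,y)$, $P_t(x_1,y)$, $P_{t'}(x_2,y)$, whose variable sets $\{x_1,x_2,y\}$, $\{x_1,y\}$, $\{x_2,y\}$ differ, violating the uniformity demanded by rule (iv). I would instead use \emph{ternary} tile predicates $P_t$ with intended reading ``$P_t(a,b,c)$ iff the tile at grid point $(a,c)$ is $t$'', the middle slot being a dummy. Then each matching constraint can be written with all atoms over a single triple of variables, hence $V$-uniform: horizontal matching is $\neg\exists x_1\exists x_2\exists y\bigvee_{t_R\ne t'_L}\bigl(H_+(x_1,x_2,y)\wedge P_t(x_1,x_2,y)\wedge P_{t'}(x_2,x_1,y)\bigr)$, vertical matching is $\neg\exists x\exists y_1\exists y_2\bigvee_{t_T\ne t'_B}\bigl(V_+(x,y_1,y_2)\wedge P_t(x,y_2,y_1)\wedge P_{t'}(x,y_1,y_2)\bigr)$, and a partition sentence $\forall x_1\forall x_2\forall y\,\bigvee_t\bigl(P_t(x_1,x_2,y)\wedge\bigwedge_{t'\ne t}\neg P_{t'}(x_1,x_2,y)\bigr)$ is uniform in the same way. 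This is exactly where the ternary $H_+,V_+$ (rather than the binary $S$) are indispensable: they present adjacency as a ternary atom on the \emph{same} triple as the two tile atoms, whereas inserting $S(x_1,x_2)$ would reintroduce a stray variable set.

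The remaining difficulty is that in an arbitrary model $P_t(a,b,c)$ may depend on the dummy $b$, so ``the tile of $(a,c)$'' need not be well-defined. I would remove this by adding a \emph{witness-independence} axiom, which is itself uniformly expressible: the formulas $E_t(a,c):=\exists b\,P_t(a,b,c)$ and $A_t(a,c):=\forall b\,P_t(a,b,c)$ each use the single ternary atom $P_t$ and so are admissible binary predicates under rule (iv), and $\forall a\forall c\,(E_t(a,c)\to A_t(a,c))$ is then a two-variable closure admissible under rule (iii). Together with the partition sentence this forces $P_t(a,b,c)$ to be independent of $b$, so that $g(a,c):=$ the unique $t$ with $P_t(a,b,c)$ is a total tile function on pairs. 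I let $\Psi_\cT$ be the conjunction of partition, witness-independence, and the two matching sentences; all conjuncts lie in $\mathrm{SUF}_2$, as does $\Gamma_+$.

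Finally I would verify that $\Gamma_+\wedge\Psi_\cT$ is satisfiable iff $\NN\times\NN$ is $\cT$-tilable. For the easy direction, a tiling $f$ expands $\G_+$ by $P_t:=\{(i,b,j)\mid f(i,j)=t\}$, which satisfies every conjunct. For the converse, given $\M\models\Gamma_+\wedge\Psi_\cT$ I take the homomorphism $h:\G_+\to\M$ of Lemma \ref{appendixlemma}; witness-independence makes $g$ well-defined on $M\times M$, and $f(i,j):=g(h(i),h(j))$ is a $\cT$-tiling, since instantiating the horizontal constraint at $(h(i),h(i+1),h(j))$ (using $H_+(h(i),h(i+1),h(j))$) yields $f(i,j)_R=f(i+1,j)_L$, and the vertical constraint at $(h(i),h(j),h(j+1))$ yields $f(i,j)_T=f(i,j+1)_B$. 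This gives $\Pi^0_1$-hardness, and with the upper bound, $\Pi^0_1$-completeness. The main obstacle throughout is the tension between strong uniformity and the two-coordinate nature of tiles; it is resolved by the ternary ``dummy-slot'' encoding together with the witness-independence axiom, the latter being the one nonobvious ingredient that turns locally uniform constraints into a single globally consistent tiling.
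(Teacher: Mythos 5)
Your proof is correct, and its skeleton coincides with the paper's: the same $\Pi^0_1$ upper bound as in Proposition \ref{general}, the same coordinate representation $\G_+$ of the grid with the axiom $\Gamma_+$ and the homomorphism Lemma \ref{appendixlemma}, and the same key device of ternary tile predicates with a dummy slot, tamed by a witness-independence axiom of the shape $\forall x_1\forall y\,(\exists x_2\,P(x_1,x_2,y)\to\forall x_2\,P(x_1,x_2,y))$ --- which the paper also uses, as the first conjunct of its $\varphi_{\mathit{prop}}$. The one genuine divergence is the tile encoding itself. The paper introduces \emph{four} families of ternary edge-colour predicates $P_{X,t}$ with $X\in\{R,L,T,B\}$, reads the two neighbouring grid points off different slot pairs of the same triple (e.g.\ $P_{R,t}(x_1,x_2,y)$ colours the right edge of $(x_1,y)$ while $P_{L,t'}(x_1,x_2,y)$ colours the left edge of $(x_2,y)$), and consequently must add cross-consistency axioms tying the four colours of a pair to a single tile, such as $\exists x_2\,P_{R,t}(x_1,x_2,y)\leftrightarrow\exists x_2\,P_{L,t}(x_2,x_1,y)$. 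You instead keep a \emph{single} family $P_t$ and address the two neighbours by permuting the arguments, writing $P_t(x_1,x_2,y)$ against $P_{t'}(x_2,x_1,y)$ in the horizontal constraint (and similarly for the vertical one); this is legitimate because uniformity constrains only the \emph{set} of variables occurring in each atom, not their order --- indeed the paper's own diagrams contain atoms in all argument orders. Your version therefore dispenses with the four-way colour bookkeeping and its consistency axioms while preserving uniformity, and your verification of both directions of the reduction (expanding $\G_+$ by $P_t:=\{(i,b,j)\mid f(i,j)=t\}$, and conversely composing the homomorphism with the well-defined tile function $g$) goes through, including the case of a non-injective homomorphism, since the matching sentences constrain arbitrary value assignments. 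In short: same route, with a slightly leaner encoding; the paper's extra predicates buy nothing that your permutation trick loses.
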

\begin{proof}
By Lemma \ref{appendixlemma}, we know that
if 
$\M$ is a $\tau_+$-model such that $\M\models\Gamma_+$,
then there exists a homomorphism $h: \G_+\to \M$.
(We also have $\G_+\models\Gamma_+$.)

Let $\cT$ be a set of tiles.
This time we simulate tiles by fresh ternary relation symbols
$P_{X,t}$, where $X\in\{R,L,T,B\}$ and $t\in \cT$. Let 
$\rho_\cT:=\tau_+\cup\{P_{X,t}\, |\,  X\in\{R,L,T,B\}, t\in \cT\}$ be
the corresponding vocabulary.
The idea here is that if $(a,b,c)\in P_{R,t}$ and $(a,b,c)\in P_{L,t'}$,
then the right edge of $(a,c)$ is coloured with $t_R$ and  
the left edge of $(b,c)$ is coloured with $t'_L$;
recall that $(a,b,c)\in H_+^{\G_+}$ means that $\bigl((a,c),(b,c)\bigr)\in H^{\G}$.
Similarly, if $(a,b,c)\in P_{T,t}$ and $(a,b,c)\in P_{B,t'}$ , then
the top edge of $(a,b)$ is coloured with $t_T$  and 
the bottom edge of $(a,c)$ is coloured with $t'_B$.
Thus, we can express the tiling conditions $(T_H)$ and $(T_V)$
by the following $\mathrm{SUF}_2$-sentences:\smallskip \\
%
%\begin{multline*}
$
	\varphi_H:=\forall x_1\forall x_2\forall y\bigwedge\limits_{t,t'\in\cT,\;  t_R\not=t'_L}\\
	\text{ }\ \ \ \ \Bigl(\bigl(P_{R,t}(x_1,x_2,y)\land P_{L,t'}(x_1,x_2,y)\bigr)
           \to \lnot H_+(x_1,x_2,y)\Bigr),\\
$
%\end{multline*}
%
$
	\varphi_V:= \forall x\forall y_1\forall y_2\bigwedge\limits_{t,t'\in\cT,\;  t_T\not=t'_B}\\
	\text{ }\ \ \ \ \Bigl(\bigl(P_{T,t}(x,y_1,y_2)\land P_{B,t'}(x,y_1,y_2)\bigr)
              \to \lnot V_+(x,y_1,y_2)\Bigr).
$

We also need a sentence $\varphi_{\mathit{prop}}$ stating that every pair 
$(a,b)$ is tiled by exactly one $t\in\cT$. This amounts to stating, firstly, that
the interpretation of each symbol $P_{R,t}$
depends only on the first and the last variable:
$
	\bigwedge_{t\in\cT}\forall x_1\forall y\, 
	(\exists x_2 \,P_{R,t}(x_1,x_2,y) \to\forall x_2 \,P_{R,t}(x_1,x_2,y)),
$
and analogously for $P_{L,t}$, $P_{T,t}$ and $P_{B,t}$. Secondly, the four
colors of each pair  correspond to the same tile, meaning that the formula
$
	\bigwedge_{t\in\cT}\forall x_1\forall y\, 
	(\exists x_2 \,P_{R,t}(x_1,x_2,y)\leftrightarrow\exists x_2 \,P_{L,t}(x_2,x_1,y))
$
holds, and similar conditions for the other pairs 
$(P_{X,t},P_{Y,t})$ hold.
Thirdly, for each $X\in\{L,R,B,T\}$,
every triple is in exactly one of the relations $P_{X,t}$, $t\in\cT$.
Clearly there is such a sentence $\varphi_{\mathit{prop}}$ in $\mathrm{SUF}_2$.
Let $\Phi_\cT$ be the conjunction of the sentences $\varphi_H$, 
$\varphi_V$ and $\varphi_{\mathit{prop}}$.
Thus we have established
that the sentence $\Gamma_+\land\, \Phi_\cT$ is satisfiable if and only if
$\NN\times\NN$ is $\cT$-tilable.
Hence we conclude that $\mathrm{SUF}_2$ is $\Pi_1^0$-complete.
\end{proof}
%
%\begin{proof}
%
%Since the sentence $\Gamma_+\land\Phi_\cT$ is in $\mathrm{SUF}_2$
%
%for each set $\cT$ of tiles,
%
%by Lemma~\ref{+tiling} the tiling problem is effectively reducible to 
%
%the satisfiability problem of $\mathrm{SUF}_2$. Hence the latter is 
%
%$\Pi^0_1$-hard. On the other hand, $\mathrm{SUF}_2$ is a fragment of
%
%first-order logic, whence its satisfiability problem is in~$\Pi^0_1$.\qed
%

%
\section{Expressivity}
\begin{theorem}
$\mathrm{UF}_1$ is incomparable in expressivity with both
two-variable logic with counting $(\mathrm{FOC}^2)$ and
guarded negation fragment $(\mathrm{GNFO})$.
\end{theorem}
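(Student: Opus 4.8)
The plan is to prove incomparability by exhibiting, in each direction and for each of the two comparison logics, a concrete property that is definable in one formalism but provably not in the other. Incomparability of $\mathrm{UF}_1$ with $\mathrm{FOC}^2$ and with $\mathrm{GNFO}$ amounts to four separations, which I would organize as follows. First, to show $\mathrm{UF}_1 \not\leq \mathrm{FOC}^2$, I would use the fact that $\mathrm{UF}_1$ can freely talk about relations of arity at least three, whereas $\mathrm{FOC}^2$ uses only two variables and hence (by a standard locality/Ehrenfeucht–Fra\"iss\'e argument for $\mathrm{FOC}^2$) cannot distinguish certain structures differing only in a genuinely ternary pattern. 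A clean witness is a sentence such as $\exists y \exists z\, R(x,y,z)$ built over a ternary $R$: I would take two structures that agree on all $\mathrm{FOC}^2$-types of elements and pairs but differ on whether such a ternary atom is realized, and invoke the known characterization of $\mathrm{FOC}^2$-equivalence via two-pebble counting games.

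Second, for $\mathrm{FOC}^2 \not\leq \mathrm{UF}_1$, the natural separating property is counting: a formula like $\exists^{\geq 2} y\, R(x,y)$ asserting the existence of at least two $R$-successors. Since $\mathrm{UF}_1$ is equality-free and, restricted to arity-$\leq 2$ vocabularies, coincides with equality-free $\mathrm{FO}^2$ (as the introduction states), it cannot count; I would make this precise by a bisimulation-type argument, exhibiting two pointed models that are indistinguishable by $\mathrm{UF}_1$ (e.g.\ via a back-and-forth system respecting the diagram/modal structure of $\mathrm{MUF}_1$) but distinguished by the counting formula. Third, for the $\mathrm{GNFO}$ comparisons: to get $\mathrm{UF}_1 \not\leq \mathrm{GNFO}$ I would exploit that $\mathrm{UF}_1$ permits \emph{unguarded} negation, e.g.\ the Boolean-modal-style formula $\exists y\bigl(\neg R(x,y) \wedge P(y)\bigr)$ highlighted in the introduction, which is not expressible in $\mathrm{GNFO}$ because every negation there must be guarded; the separation follows from the semantic invariance (guarded-negation bisimulation) that $\mathrm{GNFO}$ enjoys. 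Conversely, for $\mathrm{GNFO} \not\leq \mathrm{UF}_1$, I would pick a guarded property that violates uniformity or one-dimensionality, such as a guarded sentence mixing two distinct relational atoms on overlapping but unequal variable sets, and separate it using the $\mathrm{UF}_1$-preserving equivalence relation underlying the $\mathrm{MUF}_1$ normal form.

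The uniform tool across all four separations is the appropriate notion of game or bisimulation characterizing indistinguishability in each logic: two-pebble counting games for $\mathrm{FOC}^2$, guarded-negation bisimulations for $\mathrm{GNFO}$, and for $\mathrm{UF}_1$ a back-and-forth relation tailored to its modal normal form $\mathrm{MUF}_1$, where moves along a $\langle\delta\rangle$-modality must respect the full uniform diagram $\delta$ and preserve the one free variable. The decisive design choice is to select, for each pair of structures, witnessing examples small enough that the relevant game/bisimulation can be verified by inspection while the distinguishing formula is manifestly in the intended logic.

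The main obstacle I anticipate is establishing a clean and correct invariance (bisimulation) notion for $\mathrm{UF}_1$ itself, since $\mathrm{UF}_1$ is not a standard modal logic: its accessibility relations are full $k$-ary uniform diagrams rather than binary edges, and the one-dimensionality constraint means the back-and-forth condition must simultaneously match an entire tuple's diagram while only one coordinate remains ``live'' as the new evaluation point. Getting this invariance precise enough to rule out counting (for the $\mathrm{FOC}^2 \not\leq \mathrm{UF}_1$ direction) and to rule out certain guarded formulae (for $\mathrm{GNFO} \not\leq \mathrm{UF}_1$), without accidentally excluding formulae that $\mathrm{UF}_1$ \emph{can} express, is the delicate step; the other three directions reduce to comparatively routine applications of known invariance properties of $\mathrm{FOC}^2$ and $\mathrm{GNFO}$.
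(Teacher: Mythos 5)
Two of your four directions match the paper essentially exactly: $\mathrm{UF}_1 \not\leq \mathrm{FOC}^2$ via a sentence over a ternary relation (the paper uses $\exists x\exists y\exists z\, R(x,y,z)$), and $\mathrm{UF}_1 \not\leq \mathrm{GNFO}$ via an unguarded negated binary atom (the paper uses $\exists x\exists y\,\lnot R(x,y)$, witnessed by the GN-bisimilar structures $(\{a\},\{(a,a)\})$ and $(\{a,b\},\{(a,a),(b,b)\})$). These parts of your plan are sound and routine, as you say.

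The genuine gap is in the two directions that require \emph{inexpressibility in} $\mathrm{UF}_1$, both of which you make depend on a bespoke bisimulation for $\mathrm{UF}_1$/$\mathrm{MUF}_1$ that you correctly flag as delicate but never construct; a plan that hinges on an undefined invariance notion is not yet a proof, and the paper shows you never need it. For $\mathrm{FOC}^2 \not\leq \mathrm{UF}_1$ the paper uses the sentence $\forall x\forall y\,(x=y)$ and observes that the two directions of the decidability proof already entail that every satisfiable $\mathrm{UF}_1$-sentence is satisfiable in a strictly larger model (the torus construction blows $M$ up to $M\times T$), so no $\mathrm{UF}_1$-sentence can pin the domain to one element --- a free by-product of machinery already built, requiring no new game. (If you insist on your counting witness $\exists^{\ge 2}y\,R(x,y)$, a lighter route than a full $\mathrm{MUF}_1$-bisimulation is to note $\mathrm{UF}_1$ is equality-free and equality-free first-order logic is invariant under duplicating elements.) For $\mathrm{GNFO} \not\leq \mathrm{UF}_1$ the paper's key move, which your proposal misses, is a \emph{reduction to known results}: over vocabularies with relations of arity at most two, $\mathrm{UF}_1$ collapses into $\mathrm{FO}^2$, so the triangle sentence $\exists x\exists y\exists z\,(Rxy\wedge Ryz\wedge Rzx)$ --- in $\mathrm{GNFO}$ but not in $\mathrm{FO}^2$ by a standard pebble-game argument --- settles this direction with no new invariance theory. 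Note also that your proposed witness criterion for this direction ("a guarded sentence mixing atoms on overlapping but unequal variable sets") is unsafe as stated: e.g.\ $\exists y\bigl(R(x,y)\wedge \exists z\, S(y,z)\bigr)$ mixes atoms on $\{x,y\}$ and $\{y,z\}$ yet is perfectly expressible in $\mathrm{UF}_1$, since uniformity is only required within each quantifier block; so the syntactic shape alone proves nothing, and a semantic argument such as the $\mathrm{FO}^2$ collapse is genuinely needed.
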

\begin{proof}
%
%\subsection{Comparing to other decidable fragments}
%
\begin{comment}
%
The \emph{two-variable logic} $\mathrm{FO}^2$ is one of the most studied 
%
decidable fragments of first-order logic. It follows immediately from the 
% 
definition of the uniform one-dimensional fragment that $\mathrm{FO}^2$
%
is contained in $\mathrm{UF}_1$. In fact, if we consider only structures
%
in a binary vocabulary, it is easy to see that $\mathrm{UF}_1$ collapses
%
to $\mathrm{FO}^2$. On the other hand,
%
\end{comment}
%
%
%
%In fact, if we consider only structures
%
%in a binary vocabulary, it is easy to see that $\mathrm{UF}_1$ collapses
%
%to $\mathrm{FO}^2$.
%
The expressivity of 
$\mathrm{FOC}^2$  is seriously limited
when it comes to properties of relations
of arities greater than two. It is straightforward
to show that for example the $\mathrm{UF}_1$-sentence
$\exists x\exists y\exists z\, R(x,y,z)$ is not expressible in $\mathrm{FOC}^2$.
%
%The two-variable logics
%
%$\mathrm{FO}^2$ and $\mathrm{FOC}^2$
%
%do no cope well with relations of arities greater than two.
%
%In fact, it is straightforward to show that the sentence
%
%$\exists x\exists y\exists z\, R(x,y,z)$ is not expressible in $\mathrm{FOC}^2$.
%
%is not expressible even in two-variable 
%
%logic with counting $\mathrm{FOC}^2$.
%
%(It is straightforward to give a formal
%
%proof of this by applying a simple pebble-game argument; see
%
%\cite{IEEEonedimensional:libkin} for the definition of pebble games.)
%
Thus $\mathrm{UF}_1$ is not contained in $\mathrm{FOC}^2$.
It is straightforward to show by using the bisimulation for 
$\mathrm{GNFO}$, provided in \cite{IEEEonedimensional:barany}, that
the $\mathrm{UF}_1$-sentence $\exists x\exists y\,\lnot R(x,y)$
is not expressible in $\mathrm{GNFO}$.
This follows from the fact that structures
$$\bigl(\{a\},\{(a,a)\}\bigr)\text{ and }\bigl(\{a,b\},\{(a,a),(b,b)\}\bigr)$$
are bisimilar in the sense of $\mathrm{GNFO}$.
Thus $\mathrm{UF}_1$ is not contained in $\mathrm{GNFO}$.
The $\mathrm{FO}^2$-sentence $\forall x\forall y(x=y)$ 
cannot be expressed in $\mathrm{UF}_1$. This can be seen
(for example) by observing that the two directions of our
decidability proof together entail that 
satisfiable sentences of the equality-free logic $\mathrm{UF}_1$ can
always be satisfied in a larger model. Thus $\mathrm{UF}_1$ does not
contain $\mathrm{FO}^2$.
%

%
% Claim: $\mathrm{UF}_1$ is a canonical extension of $\mathrm{FO}^2$
% to vocabularies of arity $\ge 3$.
%

%
It follows immediately from the 
definition of $\mathrm{UF}_1$ that 
the equality-free fragment of $\mathrm{FO}^2$
is contained in $\mathrm{UF}_1$.
In fact, it is easy to prove that in restriction models with
relation symbols of arities at most two, the expressivities
of $\mathrm{UF}_1$ and the identity-free 
fragment of $\mathrm{FO}^2$ coincide. (Consider for
example the translation from $\mathrm{UF}_1$ to
$\mathrm{MUF}_1$ in the case of such vocabularies.)
To see that $\mathrm{UF}_1$ does not contain
$\mathrm{GNFO}$, consider for example the $\mathrm{GNFO}$-sentence
$\exists x\exists y \exists z (Rxy\wedge Ryz \wedge Rzx)$. It is straightforward
to show (by a pebble game argument, see \cite{IEEEonedimensional:libkin}), that this
property is not expressible in $\mathrm{FO}^2$. Since
$\mathrm{UF}_1$ is contained in $\mathrm{FO}^2$ when attention
is restricted to models with only binary relations, we conclude that
$\mathrm{UF}_1$ does not contain $\mathrm{GNFO}$.
\end{proof}

\end{document}